\numberwithin{equation}{section}
\def\Xint#1{\mathchoice
{\XXint\displaystyle\textstyle{#1}}%
{\XXint\textstyle\scriptstyle{#1}}%
{\XXint\scriptstyle\scriptscriptstyle{#1}}%
{\XXint\scriptscriptstyle%
\scriptscriptstyle{#1}}%
\!\int}
\def\XXint#1#2#3{{\setbox0=\hbox{$#1{#2#3}{%
\int}$ }
\vcenter{\hbox{$#2#3$ }}\kern-.6\wd0}}
\def\dashint{\Xint-}
\newcommand{\N}{\mathbb{N}}
\newcommand{\R}{\mathbb{R}}
\newcommand{\mm}{{\mbox{\boldmath$m$}}}
\newcommand{\sfd}{{\sf d}}
\newcommand{\Kliminf}{K\kern-3pt-\kern-2pt\mathop{\rm lim\,inf}\limits}  % Kuratowski liminf di insiemi
\newcommand{\Lip}{\mathop{\rm Lip}\nolimits}          %Lipschitz constant
\renewcommand{\d}{{\mathrm d}}
\newcommand{\restr}[1]{\lower3pt\hbox{$|_{#1}$}}
\newcommand{\eps}{\varepsilon}  
\newcommand{\nchi}{{\raise.3ex\hbox{$\chi$}}}
\newcommand{\fr}{\penalty-20\null\hfill$\blacksquare$}                      %quadratino nero alla fine del remark, se non vi piace, la cosa migliore e' `svuotare' la macro, cosi' non bisogna intervenire sul testo
\renewcommand{\mm}{\mathfrak m}                                %misura di riferimento
\newcommand{\lip}{{\rm lip}}
\newcommand{\Comp}{{\rm Comp}}
\newcommand{\X}{{\rm X}}
\newcommand{\Y}{{\rm Y}}
\newenvironment{proof}{\removelastskip\par\medskip   % inizio e fine dimostrazione
\noindent{\textit{Proof.}}\rm}{\penalty-20\null\hfill$\square$\par\medbreak}
\newtheorem{theorem}{Theorem}[section]
\newtheorem{corollary}[theorem]{Corollary}
\newtheorem{lemma}[theorem]{Lemma}
\newtheorem{proposition}[theorem]{Proposition}
\newtheorem{definition}[theorem]{Definition}
\newtheorem{remark}[theorem]{Remark}
\newcommand{\beq}{\begin{equation}}
\newcommand{\eeq}{\end{equation}}
\title{Equivalence of two different notions of tangent bundle on rectifiable metric measure spaces}
\author{Nicola Gigli\thanks{SISSA, ngigli@sissa.it} \and Enrico Pasqualetto\thanks{SISSA, epasqual@sissa.it}}
\begin{document}
\maketitle
\begin{abstract} 
We prove that for a suitable class of metric measure spaces, the abstract notion of tangent module as defined by the first author can be isometrically identified with the space of $L^2$-sections of the `Gromov-Hausdorff tangent bundle'.

The class of spaces $(\X,\sfd,\mm)$ we consider are PI spaces  that for every $\eps>0$ admit a countable collection of Borel sets $(U_i)$ covering $\mm$-a.e.\ $\X$ and corresponding $(1+\eps)$-biLipschitz maps $\varphi_i:U_i\to\R^{k_i}$ such that $(\varphi_i)_*\mm\restr{U_i}\ll\mathcal L^{k_i}$. This class is known to contain ${\sf RCD}^*(K,N)$ spaces.

Part of the work we carry out is that to give a meaning to notion of $L^2$-sections of the Gromov-Hausdorff tangent bundle, in particular explaining what it means to have a measurable map assigning to $\mm$-a.e.\ $x\in \X$ an element of the pointed-Gromov-Hausdorff limit of the blow-up of $\X$ at $x$. 
\end{abstract}
\tableofcontents
\section*{Introduction}
\addcontentsline{toc}{section}{Introduction} In the context of metric geometry there is a well established notion of tangent space at a point: the  pointed-Gromov-Hausdorff limit of the blow-up of the space at the chosen point, whenever such limit exists. More recently, the first author proposed  in \cite{Gigli14} an abstract definition of tangent bundle to a generic metric measure space, such notion being based on the concepts of $L^\infty$-module and Sobolev functions. 

It is then natural to ask  whether there is any relation between these two notions and pretty easy to realize that without some regularity assumption on the space  there is no hope to find any: on one hand in general the study of Sobolev functions might lead to no information about the metric structure of the space under consideration  (this is the case, for instance, of spaces admitting no non-constant Lipschitz curves), on the other the pointed-Gromov-Hausdorff limits of the blow-ups can fail to exist at every point.

We restrict the attention to the class of \emph{strongly $\mm$-rectifiable} spaces $(\X,\sfd,\mm)$, defined as those spaces such that for every $\eps>0$ there exists a sequence of Borel sets $(U_i)$ covering $\mm$-a.e.\ $\X$ and maps $\varphi_i:U_i\to\R^{k_i}$ such that for every $i$
\[
\varphi_i\mbox{ is }(1+\varepsilon)\mbox{-biLipschitz with its image,} \qquad\text{and}\qquad(\varphi_i)_* (\mathfrak{m}\restr{U_i})\ll\mathcal{L}^{k_i}\restr{\varphi_i(U_i)}.
\]
The main result of this paper, Theorem  \ref{thm_L^2(TX)=L^2(T_GH_X)}, states that for this class of spaces the two notions of tangent spaces mentioned above are intimately connected, in the sense that the pointed-Gromov-Hausdorff limits of rescaled spaces are Euclidean spaces for $\mm$-a.e.\ $x\in\X$ and the space of measurable sections of the bundle formed by the collection of these blow-ups is in isometric bijection with the elements of the tangent module.

Looking for an analogy,  one might think at this result as a kind of Rademacher's theorem: in either case when defining a notion of differentiability/tangent space there is on one side a `concrete' and `geometric' notion obtained by `blow-ups' and on the other an `abstract' and `analytic' notion obtained by looking at  `weak' derivatives. For general functions/spaces these might be very different, but under appropriate regularity assumptions (Lipschitz/strongly $\mm$-rectifiable) they a.e.\ coincide.

\bigskip

The motivating example of strongly $\mm$-rectifiable space are ${\sf RCD}^*(K,N)$ spaces. The existence of $(1+\eps)$-biLipschitz charts was obtained by Mondino-Naber in \cite{Mondino-Naber14} and the fact that those maps send the reference measure into something absolutely continuous w.r.t.\ the Lebesgue one has been independently proved by Kell-Mondino in \cite{MK16} and by the authors in \cite{GP16-2} (in both cases relying on the recent powerful results of De Philippis-Rindler \cite{DPR}). 

\bigskip

Finally, we remark that part of our efforts here are made to give a meaning to the concept of `measurable sections of the bundle formed by the collection of blow-ups'. Let us illustrate the point with an example.

Suppose that we have a metric space $(\X,\sfd)$ such that for every $x\in\X$ the tangent space at $x$ in the sense of pointed-Gromov-Hausdorff limit is the Euclidean space of a certain fixed dimension $k$. Then obviously all such tangent spaces would be isometric and we might want to identify all of them with a given, fixed $\R^k$. Once this identifications are chosen, given $x\in\X$ and ${\sf v}\in \R^k$ we might think at ${\sf v}$ as an element of the tangent space at $x$ and thus a vector field should be thought of as a map from $\X$ to $\R^k$. However, the choice of the identifications/isometries of the abstract tangent spaces with the fixed $\R^k$ is highly arbitrary and affects the structure that one is building: this is better seen if one wonders what it is, say, a Lipschitz vector field, or a continuous, or a measurable one. In fact, in general there is no answer to such questions, in the sense that there is no canonical choice of these identifications: the problem is that, by the very definition, a pointed-Gromov-Hausdorff limit is the isometric class of a  metric space, rather than a `concrete' one.

As we shall see, the situation changes if one works on a strongly $\mm$-rectifiable metric measure space: much like in the smooth setting the charts of a manifold are used to give structure to the tangent bundle, in this case the presence of charts allows for a canonical identification of the tangent spaces while also ensuring  existence and uniqueness of a measurable structure on the resulting bundle (and in general nothing more than this, so that we still can't define continuous vector fields). The construction of such measurable bundle, which we call Gromov-Hausdorff tangent bundle and denote by $T_{\rm GH}\X$, is done in Section \ref{subsect_GH_Tangent_Bundle}, while in Section \ref{sect_Geometric_Interpretation_of_T_GH_X} we show that its fibres are the pointed-Gromov-Hausdorff limits of rescaled spaces, thus justifying the terminology.

\bigskip

\noindent{\bf Acknowledgment }

\noindent This research has been supported by the MIUR SIR-grant `Nonsmooth Differential Geometry' (RBSI147UG4).

\section{Preliminaries}
\subsection{Metric Measure Spaces}
For the purpose of this paper, a \textsl{metric measure space} is a triple $(\mbox{X},\mathsf{d},\mathfrak{m})$, where
\begin{equation}\label{f_mms}
\begin{array}{cl}
(\mbox{X},\mathsf{d})&\mbox{is a complete and separable metric space,}\\
\mathfrak{m}&\mbox{is a non-negative Radon measure on }\mbox{X}\mbox{.}
\end{array}
\end{equation}
Given two metric measure spaces $({\rm X},\mathsf{d}_{\rm X},\mathfrak{m}_{\rm X})$ and $({\rm Y},\mathsf{d}_{\rm Y},\mathfrak{m}_{\rm Y})$, we will always implicitly endow the product space ${\rm X}\times{\rm Y}$ with the product distance $\mathsf{d}_{\rm X}\times\mathsf{d}_{\rm Y}$ given by
\[
(\mathsf{d}_{\rm X}\times\mathsf{d}_{\rm Y})\big((x_1,y_1),(x_2,y_2)\big)^2:=\sfd_{\rm X}(x_1,x_2)^2+\sfd_{\rm Y}(y_1,y_2)^2
\]
and the product measure $\mathfrak{m}_{\rm X}\otimes\mathfrak{m}_{\rm Y}$.\\
A metric measure space $(\mbox{X},\mathsf{d},\mathfrak{m})$ is said to be \textsl{doubling} provided there exists $C>0$ such that
\begin{equation}\label{f_doubling}
0<\mathfrak{m}\big(B_{2r}(x)\big)\leq C\,\mathfrak{m}\big(B_r(x)\big)<+\infty
\quad\mbox{ for every }x\in\mbox{X}\mbox{ and }r>0
\end{equation}
and the least such constant $C$ is called the doubling constant of the space.

A fundamental property of doubling metric measure spaces is the Lebesgue differentiation theorem, whose proof can be found e.g. in \cite{Heinonen01}:
\begin{theorem}[Lebesgue differentiation theorem]\label{thm_leb_diff_thm}
Let $(\emph{X},\mathsf{d},\mathfrak{m})$ be a doubling metric measure space. Let $f\in L^1_{\emph{loc}}(\emph{X})$. Then
\begin{equation}\label{f_leb_diff_thm}
f(x)=\lim_{r\to 0}\;\dashint_{B_r(x)}f\,\emph{d}\mathfrak{m}\quad\mbox{ for }\mathfrak{m}\mbox{-a.e. }x\in\emph{X}\mbox{.}
\end{equation}
\end{theorem}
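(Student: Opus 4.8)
The plan is to follow the classical route via the Hardy--Littlewood maximal function, adapted to the doubling setting; the argument is standard and can be found in \cite{Heinonen01}, but let me outline it. Since the statement is local and $\mm$ is Radon (so that balls have finite measure), I would first reduce to the case $\mm(\X)<+\infty$ and $f\in L^1(\X)$: fix an arbitrary ball $B_R(x_0)$ and prove the conclusion for $\mm$-a.e.\ $x\in B_{R/2}(x_0)$ after replacing $f$ by $f\,\chi_{B_R(x_0)}$, which is legitimate because for $x\in B_{R/2}(x_0)$ and $r$ small the average $\dashint_{B_r(x)}f\,\d\mm$ is unaffected by the values of $f$ outside $B_R(x_0)$.

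The first ingredient is the basic $5r$-covering (Vitali) lemma, which holds in doubling spaces: from any family of balls with uniformly bounded radii one can extract a countable pairwise-disjoint subfamily whose $5$-dilates cover the union. Using it, I would establish the weak-$(1,1)$ bound for the centered maximal operator $Mf(x):=\sup_{r>0}\dashint_{B_r(x)}|f|\,\d\mm$, namely $\mm\big(\{Mf>\lambda\}\big)\leq \tfrac{C^3}{\lambda}\,\|f\|_{L^1(\X)}$ for every $\lambda>0$, where $C$ is the doubling constant. Indeed, for each $x$ in the open set $\{Mf>\lambda\}$ choose $r_x>0$ with $\int_{B_{r_x}(x)}|f|\,\d\mm>\lambda\,\mm\big(B_{r_x}(x)\big)$ (the radii being bounded since $\mm$ is finite), extract a disjoint subfamily $\{B_{r_i}(x_i)\}_i$ as above, and estimate $\mm\big(\{Mf>\lambda\}\big)\leq\sum_i\mm\big(B_{5r_i}(x_i)\big)\leq C^3\sum_i\mm\big(B_{r_i}(x_i)\big)\leq\tfrac{C^3}{\lambda}\sum_i\int_{B_{r_i}(x_i)}|f|\,\d\mm\leq\tfrac{C^3}{\lambda}\|f\|_{L^1(\X)}$.

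For the conclusion I would use that Lipschitz functions with bounded support are dense in $L^1(\X)$ (standard, as $\mm$ is Radon on a complete separable metric space), and that for any such $g$ continuity gives $\dashint_{B_r(x)}g\,\d\mm\to g(x)$ at \emph{every} $x$. Given general $f$ and $\eps>0$, write $f=g+h$ with $g$ Lipschitz of bounded support and $\|h\|_{L^1(\X)}<\eps$. Setting $\Omega f(x):=\limsup_{r\to0}\big|\dashint_{B_r(x)}f\,\d\mm-f(x)\big|$, one has $\Omega f=\Omega h\leq Mh+|h|$ pointwise, whence for every $\lambda>0$
\[
\mm\big(\{\Omega f>2\lambda\}\big)\leq\mm\big(\{Mh>\lambda\}\big)+\mm\big(\{|h|>\lambda\}\big)\leq\frac{C^3+1}{\lambda}\,\eps .
\]
Letting $\eps\downarrow0$ gives $\mm\big(\{\Omega f>2\lambda\}\big)=0$ for every $\lambda>0$, and then $\lambda\downarrow0$ yields $\Omega f=0$ $\mm$-a.e., which is exactly \eqref{f_leb_diff_thm}.

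The only genuinely nontrivial point is the $5r$-covering lemma together with the bookkeeping needed to run the maximal-function argument in a metric rather than Euclidean setting: in particular the measurability of $\{Mf>\lambda\}$ (which follows from lower semicontinuity of $Mf$) and the non-uniqueness of a ball's center and radius, handled by working with the centered maximal function. Everything else is routine once the weak-$(1,1)$ estimate is available.
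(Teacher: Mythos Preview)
Your outline is the standard maximal-function proof and is correct; it is essentially the argument in \cite{Heinonen01}, which is precisely what the paper does---the paper gives no proof of its own for this theorem and simply refers the reader to \cite{Heinonen01}. One small remark: the $5r$-covering lemma holds in arbitrary metric spaces, not just doubling ones; the doubling hypothesis enters only in the step $\mm\big(B_{5r_i}(x_i)\big)\leq C^3\,\mm\big(B_{r_i}(x_i)\big)$.
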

Given a point $x\in\mbox{X}$ and a Borel subset $E$ of $\mbox{X}$, we say that $x$ is \textsl{of density} $\lambda\in[0,1]$ for $E$ if
\begin{equation}\label{f_density_pt}
D_E(x):=\lim_{r\to 0}\frac{\mathfrak{m}\big(E\cap B_r(x)\big)}{\mathfrak{m}\big(B_r(x)\big)}=\lambda\mbox{.}
\end{equation}
By applying Theorem \ref{thm_leb_diff_thm} to the function $\chi_E$, we deduce that
\begin{equation}\label{f_a.e._pt_1-density}
D_E(x)=1\quad\mbox{ for }\mathfrak{m}\mbox{-a.e. }x\in E\mbox{.}
\end{equation}
Let us recall the following well-known result about points of density 1:
\begin{lemma}\label{lemma_pts_1-density}
Let $(\emph{X},\mathsf{d},\mathfrak{m})$ be a doubling metric measure space.
Let $E\in\mathscr{B}(\emph{X})$. Let $\bar{x}\in\emph{X}$ be a point of density $1$ for $E$.
Then
\begin{equation}\label{f_pts_1-density_1}
\forall\varepsilon>0\;\;\exists\,r>0:\;\;\forall{}x\in{}B_{r}(\bar{x})\;\;\exists\,y\in{}E:\quad\mathsf{d}(x,y)<\varepsilon\,\mathsf{d}(x,\bar{x})\mbox{.}
\end{equation}
\end{lemma}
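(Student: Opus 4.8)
The plan is to argue by contradiction, turning the failure of \eqref{f_pts_1-density_1} into a uniform lower bound, at arbitrarily small scales, on the amount of $\mm$-mass that $\bar x$ sees \emph{outside} $E$ — which is incompatible with $D_E(\bar x)=1$. So suppose \eqref{f_pts_1-density_1} fails for some $\varepsilon>0$. Then for every $n\in\N$ there is a point $x_n\in B_{1/n}(\bar x)$, which we may take $\neq\bar x$, with $\mathsf{d}(x_n,y)\ge\varepsilon\,\mathsf{d}(x_n,\bar x)$ for all $y\in E$; writing $s_n:=\mathsf{d}(x_n,\bar x)\in(0,1/n)$ this means $E\cap B_{\varepsilon s_n}(x_n)=\emptyset$, and $s_n\to 0$. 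Since $s_n=\mathsf{d}(x_n,\bar x)$, the triangle inequality gives the two inclusions $B_{\varepsilon s_n}(x_n)\subseteq B_{(1+\varepsilon)s_n}(\bar x)\subseteq B_{(2+\varepsilon)s_n}(x_n)$, which will be used below.

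The key step — and the only point where the doubling hypothesis enters — is to compare the mass of the small ball $B_{\varepsilon s_n}(x_n)$ with that of $B_{(1+\varepsilon)s_n}(\bar x)$. Fix once and for all $N\in\N$ with $2^N\varepsilon\ge 2+\varepsilon$; then $B_{(1+\varepsilon)s_n}(\bar x)\subseteq B_{(2+\varepsilon)s_n}(x_n)\subseteq B_{2^N\varepsilon s_n}(x_n)$, so $N$ applications of \eqref{f_doubling} give $\mm\big(B_{(1+\varepsilon)s_n}(\bar x)\big)\le C^N\,\mm\big(B_{\varepsilon s_n}(x_n)\big)$, all the quantities involved being strictly positive and finite by \eqref{f_doubling}. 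Combining this with the fact that $B_{\varepsilon s_n}(x_n)$ is disjoint from $E$ and contained in $B_{(1+\varepsilon)s_n}(\bar x)$, we obtain
\[
\mm\big(E\cap B_{(1+\varepsilon)s_n}(\bar x)\big)\ \le\ \mm\big(B_{(1+\varepsilon)s_n}(\bar x)\big)-\mm\big(B_{\varepsilon s_n}(x_n)\big)\ \le\ \big(1-C^{-N}\big)\,\mm\big(B_{(1+\varepsilon)s_n}(\bar x)\big).
\]
Dividing by $\mm\big(B_{(1+\varepsilon)s_n}(\bar x)\big)>0$ shows that the density quotient in \eqref{f_density_pt} at scale $r_n:=(1+\varepsilon)s_n$ stays $\le 1-C^{-N}<1$ for every $n$; since $r_n\to 0$, this contradicts $D_E(\bar x)=1$, and the lemma follows.

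I expect no conceptual obstacle here; the only thing to be careful about is the bookkeeping of the doubling constant when passing from balls centred at $x_n$ to the comparable ball centred at $\bar x$, i.e.\ the choice of the exponent $N=N(\varepsilon,C)$.
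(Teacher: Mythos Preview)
Your proof is correct and follows essentially the same route as the paper's: both argue by contradiction, establish the same chain of inclusions $B_{\varepsilon s_n}(x_n)\subseteq B_{(1+\varepsilon)s_n}(\bar x)\subseteq B_{(2+\varepsilon)s_n}(x_n)$, pick $N$ with $2^N\varepsilon\ge 2+\varepsilon$ to apply doubling, and conclude that the density quotient along the sequence of radii $(1+\varepsilon)s_n$ is bounded above by $1-C^{-N}<1$. The only cosmetic difference is that the paper indexes its contradicting family by $r>0$ rather than by $n\in\N$.
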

\begin{proof}
We argue by contradiction: assume the existence of $\varepsilon>0$ and of points $\{x_r\}_{r>0}\subseteq\mbox{X}$ with $\mathsf{d}(x_r,\bar{x})<r$ for every $r>0$, such that
\begin{equation}\label{f_pts_1-density_2}
E\cap B_{\varepsilon\,\mathsf{d}(x_{r},\bar{x})}(x_{r})=\emptyset\quad\mbox{ for every }r>0\mbox{.}
\end{equation}
Let $C>0$ be the doubling constant of the space. Fix $n\in\mathbb{N}$ such that $2^n\,\varepsilon\geq 2+\varepsilon$.
Thus $B_{\varepsilon\,\mathsf{d}(x_r,\bar{x})}(x_r)\subseteq B_{(1+\varepsilon)\,\mathsf{d}(x_r,\bar{x})}(\bar{x})
\subseteq B_{(2+\varepsilon)\,\mathsf{d}(x_r,\bar{x})}(x_r)\subseteq B_{2^n\,\varepsilon\,\mathsf{d}(x_r,\bar{x})}(x_r)$
for every $r>0$, hence the doubling condition implies that
\begin{equation}\label{f_pts_1-density_3}
\mathfrak{m}\big(B_{\varepsilon\,\mathsf{d}(x_{r},\bar{x})}(x_{r})\big)
\geq\frac{\mathfrak{m}\big(B_{2^{n}\varepsilon\,\mathsf{d}(x_{r},\bar{x})}(x_{r})\big)}{C^{n}}
\geq\frac{\mathfrak{m}\big(B_{(1+\varepsilon)\,\mathsf{d}(x_{r},\bar{x})}(\bar{x})\big)}{C^{n}}
\quad\mbox{ for every }r>0\mbox{.}
\end{equation}
Therefore
\[\begin{split}
D_{E}(\bar{x})&=\lim_{r\to 0}\frac{\mathfrak{m}\big(B_{(1+\varepsilon)\,\mathsf{d}(x_r,\bar{x})}(\bar{x})\cap E\big)}
{\mathfrak{m}\big(B_{(1+\varepsilon)\,\mathsf{d}(x_r,\bar{x})}(\bar{x})\big)}\\
(\text{by }\eqref{f_pts_1-density_2})\qquad&\leq
\underset{r\to 0}{\underline{\lim}}\,\frac{\mathfrak{m}\big(B_{(1+\varepsilon)\,\mathsf{d}(x_r,\bar{x})}(\bar{x})\setminus B_{\varepsilon\,\mathsf{d}(x_r,\bar{x})}(x_r)\big)}{\mathfrak{m}\big(B_{(1+\varepsilon)
\,\mathsf{d}(x_r,\bar{x})}(\bar{x})\big)}\\
&=\underset{r\to 0}{\underline{\lim}}\,\frac{\mathfrak{m}\big(B_{(1+\varepsilon)\,\mathsf{d}(x_r,\bar{x})}(\bar{x})\big)-\mathfrak{m}\big(B_{\varepsilon\,\mathsf{d}(x_r,\bar{x})}(x_r)\big)}{\mathfrak{m}\big(B_{(1+\varepsilon)\,\mathsf{d}(x_r,\bar{x})}(\bar{x})\big)}\\
(\text{by }\eqref{f_pts_1-density_3})\qquad&\leq 1-\frac{1}{C^n}<1\mbox{,}
\end{split}\]
which contradicts our assumption $D_{E}(\bar{x})=1$. 
\end{proof}
In addition to doubling spaces, another class of metric measure spaces we shall deal with is that of PI  space, whose definition requires some preliminary terminology.

Given a complete and separable metric space $(\mbox{X},\mathsf{d})$, we say that a curve $\gamma\in C\big([0,1],\mbox{X}\big)$ is \textsl{absolutely continuous} if there exists $f\in L^1(0,1)$ such that
\begin{equation}\label{f_curve_AC}
\mathsf{d}(\gamma_t,\gamma_s)\leq\int_t^s f(r)\,\mbox{d}r\quad\mbox{ for every }t,s\in [0,1]\mbox{ with }t<s\mbox{.}
\end{equation}
We will denote by $AC\big([0,1],\mbox{X}\big)$ the set of all the absolutely continuous curves in $\mbox{X}$. Given any $\gamma\in AC\big([0,1],\mbox{X}\big)$, the limit
\begin{equation}\label{f_metric_speed}
|\dot{\gamma}_t|:=\lim_{h\to 0}\frac{\mathsf{d}(\gamma_{t+h},\gamma_t)}{|h|}
\end{equation}
exists for $\mathcal{L}^1$-a.e. $t\in [0,1]$ and defines an $L^1$ function. Such map, called \textsl{metric speed} of $\gamma$, is the minimal (in the a.e. sense) $L^1$ function which can be chosen as $f$ in the right hand side of \eqref{f_curve_AC}. For a proof of these results, we refer to Theorem 1.1.2 of \cite{AmbrosioGigliSavare08}.

Given a map $u:\,\mbox{X}\to\mathbb{R}$ and a Borel function $G:\,\mbox{X}\to [0,+\infty]$, we say that $G$ is an \textsl{upper gradient} of $u$ provided
\begin{equation}\label{f_upper_gradient}
\big|u(\gamma_1)-u(\gamma_0)\big|\leq\int_0^1 G(\gamma_t)\,|\dot{\gamma}_t|\,\mbox{d}t
\quad\mbox{ for every }\gamma\in AC\big([0,1],\mbox{X}\big)\mbox{.}
\end{equation}
We are now ready to introduce the concept of PI space:
\begin{definition}[Poincar\'{e} inequality and PI spaces]\label{def_poincare_space}
Let $(\emph{X},\mathsf{d},\mathfrak{m})$ be a metric measure space. We say that  $(\emph{X},\mathsf{d},\mathfrak{m})$ \emph{supports a weak $(1,2)$-Poincar\'{e} inequality} if there exist constants $C>0$ and $\lambda\geq 1$ such that the following condition is satisfied: for every $u:{\rm X}\to\R$ continuous and  every upper gradient $G$ of $u$, it holds that
\begin{equation}\label{f_poincare_ineq}
\dashint_{B_r(x)}{\big|u-u_{B_r(x)}\big|}\,\emph{d}\mathfrak{m}\leq
C\,r\,\left(\dashint_{B_{\lambda r}(x)}G^2\,\emph{d}\mathfrak{m}\right)^{1/2}
\end{equation}
for every $x\in\emph{X}$ and $r>0$, where $u_{B_r(x)}:=\dashint_{B_r(x)}u\,\emph{d}\mathfrak{m}$.

A space which is both doubling and supporting a weak  $(1,2)$-Poincar\'{e} inequality will be called {\rm PI space.}
\end{definition}
\subsection{Lipschitz Functions}\label{subsect_Properties_of_Lipschitz_Functions}
Let $(\mbox{X},\mathsf{d}_{\mbox{\scriptsize{X}}})$ and $(\mbox{Y},\mathsf{d}_{\mbox{\scriptsize{Y}}})$ be metric spaces. A function $f:\,\mbox{X}\to\mbox{Y}$ is said to be \textsl{Lipschitz} (or, more precisely, \textsl{$\lambda$-Lipschitz}) if there exists $\lambda\geq 0$ such that $\mathsf{d}_{\mbox{\scriptsize{Y}}}\big(f(x),f(y)\big)\leq\lambda\,\mathsf{d}_{\mbox{\scriptsize{X}}}(x,y)$ for every $x,y\in\mbox{X}$. 
The smallest $\lambda\geq 0$ such that $f$ is $\lambda$-Lipschitz is denoted by $\mbox{Lip}(f)$ and is called \textsl{Lipschitz constant} of $f$.
The family of all the Lipschitz functions from $\mbox{X}$ to $\mbox{Y}$ is denoted by $\mbox{LIP}(\mbox{X},\mbox{Y})$.
For the sake of brevity, we shall write $\mbox{LIP}(\mbox{X})$ instead of $\mbox{LIP}(\mbox{X},\mathbb{R})$.
We say that a function $f:\,\mbox{X}\to\mbox{Y}$ is \textsl{$\lambda$-biLipschitz} if it is invertible and $f$, $f^{-1}$ are $\lambda$-Lipschitz.
\begin{definition}[Local Lipschitz constant]\label{def_loc_lip_const}
Let $(\emph{X},\mathsf{d})$ be a metric space. Let $f\in\emph{LIP}(\emph{X})$. Then the \emph{local Lipschitz constant} of $f$ is the function $\emph{lip}(f):\,\emph{X}\to[0,+\infty)$, which is defined by $\emph{lip}(f)(x):=0$ if $x\in\emph{X}$ is an isolated point and by
\begin{equation}\label{f_loc_lip_const}
\emph{lip}(f)(x):=\underset{\substack{y\to x \vspace{.03cm} \\ y\in\emph{X}\setminus\{x\}}}{\overline{\lim}}\frac{\big|f(y)-f(x)\big|}{\mathsf{d}(y,x)}\quad\mbox{ if }x\in\emph{X}\mbox{ is an accumulation point.}
\end{equation}
\end{definition}
One can easily prove that $\mbox{lip}(f)\leq\mbox{Lip}(f)$ and that
\begin{equation}\label{f_chain_rule_loc_lip_const_1}
\lip(f\circ\varphi)\leq\Lip(\varphi)\,\lip(f)\circ\varphi\mbox{.}
\end{equation}
for any couple of metric spaces ${\rm X},{\rm Y}$ and functions  $\varphi\in{\rm LIP}({\rm X},{\rm Y})$ and $f\in{\rm LIP}({\rm Y})$. 

Given a metric space $(\mbox{X},\mathsf{d})$, a Lipschitz function $f\in\mbox{LIP}(\mbox{X})$ and a Borel set $E\in\mathscr{B}(\mbox{X})$, we have that $\mbox{lip}\big({f|}_E\big)(x)\leq\mbox{lip}(f)(x)$ is satisfied for every point $x\in\mbox{X}$, where
$\mbox{lip}\big({f|}_E\big)$ is taken in the metric space $\big(E,{\mathsf{d}|}_{E\times E}\big)$. Simple examples show that in general equality does not hold;
however, if we restrict to the case of a doubling metric measure space, then  Lemma \ref{lemma_pts_1-density} grants that equality holds at least on density points of $E$:
\begin{proposition}\label{prop_lip(f|E)=lip(f)}
Let $(\emph{X},\mathsf{d},\mathfrak{m})$ be a doubling metric measure space. Fix a Borel set $E\in\mathscr{B}(\emph{X})$ and a Lipschitz function $f\in\emph{LIP}(\emph{X})$. Then
\begin{equation}\label{f_lip(f|E)=lip(f)_1}
\emph{lip}\big({f|}_E\big)(x)=\emph{lip}(f)(x)\quad\mbox{ for }\mathfrak{m}\mbox{-a.e. }x\in E\mbox{.}
\end{equation}
\end{proposition}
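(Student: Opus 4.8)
The plan is as follows. Since the excerpt already records that $\lip(f|_E)(x)\le\lip(f)(x)$ for every $x\in\X$, it suffices to prove the reverse inequality $\lip(f|_E)(x)\ge\lip(f)(x)$ at $\mm$-a.e.\ $x\in E$; the two combined give \eqref{f_lip(f|E)=lip(f)_1}. By \eqref{f_a.e._pt_1-density}, $\mm$-a.e.\ $x\in E$ is a point of density $1$ for $E$, so I would fix one such point $\bar x$ and assume in addition that it is an accumulation point of $E$ (so that $\lip(f|_E)(\bar x)$ is computed by the formula \eqref{f_loc_lip_const} rather than by the isolated-point convention), and prove the inequality there. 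If $\lip(f)(\bar x)=0$ there is nothing to show; otherwise I would pick a sequence $(y_n)\subseteq\X\setminus\{\bar x\}$ with $y_n\to\bar x$ realising the local Lipschitz constant, i.e.\ $\dfrac{|f(y_n)-f(\bar x)|}{\sfd(y_n,\bar x)}\to\lip(f)(\bar x)$ as $n\to\infty$.

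The heart of the argument is to transplant the sequence $(y_n)$ onto $E$ without losing control of the difference quotient. Fix $\eps\in(0,1)$. Applying Lemma \ref{lemma_pts_1-density} to $E$ at the density point $\bar x$ furnishes $r>0$ such that every $x\in B_r(\bar x)$ admits some $y\in E$ with $\sfd(x,y)<\eps\,\sfd(x,\bar x)$. For $n$ large enough $y_n\in B_r(\bar x)$, so I can select $z_n\in E$ with $\sfd(y_n,z_n)<\eps\,\sfd(y_n,\bar x)$. The triangle inequality then gives $(1-\eps)\,\sfd(y_n,\bar x)\le\sfd(z_n,\bar x)\le(1+\eps)\,\sfd(y_n,\bar x)$, so that $z_n\in E\setminus\{\bar x\}$ and $z_n\to\bar x$, while
\[
|f(z_n)-f(\bar x)|\ge|f(y_n)-f(\bar x)|-\Lip(f)\,\sfd(y_n,z_n)\ge|f(y_n)-f(\bar x)|-\eps\,\Lip(f)\,\sfd(y_n,\bar x).
\]
Dividing by $\sfd(z_n,\bar x)\le(1+\eps)\,\sfd(y_n,\bar x)$ yields
\[
\frac{|f(z_n)-f(\bar x)|}{\sfd(z_n,\bar x)}\ge\frac{1}{1+\eps}\left(\frac{|f(y_n)-f(\bar x)|}{\sfd(y_n,\bar x)}-\eps\,\Lip(f)\right).
\]
Since the $z_n$ lie in $E\setminus\{\bar x\}$ and converge to $\bar x$, passing to the $\limsup$ in $n$ the left-hand side is bounded above by $\lip(f|_E)(\bar x)$; hence $\lip(f|_E)(\bar x)\ge\frac{1}{1+\eps}\big(\lip(f)(\bar x)-\eps\,\Lip(f)\big)$. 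Letting $\eps\downarrow 0$ gives $\lip(f|_E)(\bar x)\ge\lip(f)(\bar x)$, which is exactly what was needed.

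The only genuinely delicate point is the transplantation step: one must ensure that the points $z_n$ produced by Lemma \ref{lemma_pts_1-density} are genuinely distinct from $\bar x$ and still converge to it — this is precisely why the lemma is phrased with the scale-invariant bound $\sfd(x,y)<\eps\,\sfd(x,\bar x)$ rather than with a fixed radius — and that the perturbations introduced when passing from $y_n$ to $z_n$, both in the numerator (absorbed by $\Lip(f)$) and in the denominator (absorbed by the factor $1+\eps$), are uniformly small as $\eps\to 0$. Everything else is routine bookkeeping with the triangle inequality and the definitions of $\lip$ and $\Lip$.
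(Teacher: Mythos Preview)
Your proof is correct and follows essentially the same strategy as the paper's: reduce to density-$1$ points via \eqref{f_a.e._pt_1-density}, pick a sequence realising $\lip(f)(\bar x)$, invoke Lemma~\ref{lemma_pts_1-density} to transplant it into $E$, and control the perturbation using the global Lipschitz bound and the triangle inequality. The only cosmetic difference is that the paper lets the parameter in Lemma~\ref{lemma_pts_1-density} vary along the sequence (taking $\eps=1/n$ after passing to subsequences), whereas you fix $\eps$ and send it to $0$ at the end; both are standard ways to package the same estimate.
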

\begin{proof}
It suffices to prove that $\mbox{lip}(f)(x)\leq\mbox{lip}\big({f|}_E\big)(x)$ for every point $x\in E$ of density $1$. Thus fix $x\in E$ with $D_E(x)=1$. If $x$ is an isolated point in $X$, then $\mbox{lip}(f)(x)=\mbox{lip}\big({f|}_E\big)(x)=0$. If $x$ is an accumulation point, then take a sequence $(x_n)_n\subseteq\mbox{X}\setminus\{x\}$ converging to $x$. Up to passing to a suitable subsequence, we can assume that $\overline{\lim}_n\,\big|f(x_n)-f(x)\big|/\mathsf{d}(x_n,x)$ is actually a limit.
Moreover, possibly passing to a further subsequence, Lemma \ref{lemma_pts_1-density} provides the existence of a sequence $(y_n)_n\subseteq E$ satisfying $\mathsf{d}(x_n,y_n)<\mathsf{d}(x_n,x)/n$ for every $n\geq 1$.
In particular, $\lim_n y_n=x$ and $y_n\neq x$ for every $n\geq 1$. Therefore
\[\begin{split}
\lim_{n\to\infty}\frac{\big|f\big(x_{n})-f(x)\big|}{\mathsf{d}(x_{n},x)}&
\leq\underset{n\to\infty}{\overline{\lim}}\frac{\big|f(x_n)-f(y_n)\big|}{\mathsf{d}(x_n,y_n)}\,
\frac{\mathsf{d}(x_n,y_n)}{\mathsf{d}(x_n,x)}+\underset{n\to\infty}{\overline{\lim}}\frac{\big|f(y_n)-f(x)\big|}{\mathsf{d}(y_n,x)}\,\frac{\mathsf{d}(y_n,x)}{\mathsf{d}(x_n,x)}\\
&\leq\mbox{Lip}(f)\lim_{n\to\infty}\frac{1}{n}+
\underset{n\to\infty}{\overline{\lim}}\frac{\big|f(y_n)-f(x)\big|}{\mathsf{d}(y_n,x)}
\lim_{n\to\infty}\left(1+\frac{1}{n}\right)\\
&\leq\mbox{lip}\big({f|}_E\big)(x)\mbox{.}
\end{split}\]
The arbitrariness of $(x_n)$ gives the conclusion.
\end{proof}
In what follows we shall frequently use the fact that
\begin{equation}\label{f_McShane-Whitney}
\begin{split}
&\mbox{Given a metric space }(\mbox{X},\mathsf{d})\mbox{, a subset }E\mbox{ of }\mbox{X}\mbox{ and }f\in\mbox{LIP}(E)\mbox{,}\\
&\mbox{there exists }\bar{f}\in\mbox{LIP}(\mbox{X})\mbox{ such that }{\bar{f}|}_E=f\mbox{ and }\mbox{Lip}(\bar{f})=\mbox{Lip}(f)\mbox{.}\end{split}
\end{equation}
An explicit expression - called McShane extension - for such a $\bar f$ is given by $\bar{f}(x):=\inf\big\{f(y)+\mbox{Lip}(f)\,\mathsf{d}(x,y)\,\big|\,y\in E\big\}$ for every $x\in\mbox{X}$.

Arguing componentwise, from this fact we also directly deduce that
\[
\begin{split}
&\mbox{Given a metric space }(\mbox{X},\mathsf{d})\mbox{, a subset }E\mbox{ of }\mbox{X}\mbox{ and }f\in\mbox{LIP}(E,\R^n)\mbox{,}\\
&\mbox{there exists }\bar{f}\in\mbox{LIP}(\mbox{X},\R^n)\mbox{ such that }{\bar{f}|}_E=f\mbox{ and }\mbox{Lip}(\bar{f})\leq \sqrt n\,\mbox{Lip}(f)\mbox{.}
\end{split}
\]
Let us briefly discuss the case of Lipschitz functions from $\R^k$ into itself.  Let ${\rm End}(\R^k)$ be the set of linear maps from $\R^k$ to itself, $E\subset \R^k$ be Borel and $f:E\to\R^k$ be a Lipschitz function. Find a Lipschitz extension $\tilde f$ of $f$ to the whole $\R^k$ and use Rademacher theorem to obtain that $\tilde f$ is differentiable $\mathcal L^k$-a.e.. Call $\d \tilde f(x)\in {\rm End}(\R^k)$  such differential at the point $x$, whenever it is defined. Then it is not hard to check, for instance following the same arguments used for the proof of Proposition \ref{prop_lip(f|E)=lip(f)}, that for $\mathcal L^k$-a.e.\ $x\in E$, the value of $\d\tilde f(x)$ does not depend on the chosen extension $\tilde f$, so that the formula
\[
\d f(x):=\d\tilde f(x)\qquad\mathcal L^k-a.e.\ x\in E,
\]
is well posed and defines an element of $L^\infty(E,{\rm End}(\R^k))$ satisfying
\[
\|\d f(x)\|\leq \Lip(f)\qquad\mathcal L^k-a.e.\ x\in E.
\]
\subsection{Sobolev Calculus}
The scope of this section is to recall how to build the Sobolev space $W^{1,2}(\mbox{X})$ on a metric measure space. The following definitions and results are taken from \cite{AmbrosioGigliSavare11-2} and \cite{Gigli12}.

Let $(\mbox{X},\mathsf{d},\mathfrak{m})$ be a metric measure space, which will be fixed for the whole section. For every $t\in [0,1]$, we denote by $\mbox{e}_t:\,C\big([0,1],\mbox{X}\big)\to\mbox{X}$ the \textsl{evaluation map} at time $t$, namely
\begin{equation}\label{f_evaluation_map}
\mbox{e}_t(\gamma):=\gamma_t\quad\mbox{ for every }\gamma\in C\big([0,1],\mbox{X}\big)\mbox{.}
\end{equation}
Recall that $C\big([0,1],\mbox{X}\big)$ is a metric space, with respect to the sup distance. Hence we can consider a Borel probability measure $\boldsymbol{\pi}$ on $C\big([0,1],\mbox{X}\big)$. We say that $\boldsymbol{\pi}$ is a \textsl{test plan} provided
\begin{equation}\label{f_test_plan}\begin{split}
&(\mbox{e}_t)_\sharp\boldsymbol{\pi}\leq C\,\mathfrak{m}\;\mbox{ for every }t\in[0,1]\mbox{,}\quad\mbox{ for some constant }C>0\mbox{,}\\
&\int\!\!\!\int_0^1{|\dot{\gamma}_t|}^2\,\mbox{d}t\,\mbox{d}\boldsymbol{\pi}(\gamma)<+\infty\mbox{,}\quad\mbox{ where }\int_0^1{|\dot{\gamma}_t|}^2\,\mbox{d}t:=+\infty\mbox{ if }\gamma\notin AC\big([0,1],\mbox{X}\big)\mbox{.}
\end{split}\end{equation}
In particular, any test plan must be necessarily concentrated on $AC\big([0,1],\mbox{X}\big)$.
\begin{definition}[Sobolev class]\label{def_sobolev_class}
The \emph{Sobolev class} $\emph{S}^2(\emph{X})$ (resp. $\emph{S}^2_{\emph{loc}}(\emph{X})$) is the space of all the Borel maps $f:\,\emph{X}\to\mathbb{R}$ such that there exists $G\in L^2(\mathfrak{m})$ (resp. $G\in L^2_{\emph{loc}}(\mathfrak{m})$) satisfying
\begin{equation}\label{f_sobolev_class}
\int\big|f(\gamma_1)-f(\gamma_0)\big|\,\emph{d}\boldsymbol{\pi}(\gamma)\leq\int\!\!\!\int_0^1 G(\gamma_t)\,|\dot{\gamma}_t|\,\emph{d}t\,\emph{d}\boldsymbol{\pi}(\gamma)\quad\mbox{ for every test plan }\boldsymbol{\pi}\mbox{.}
\end{equation}
\end{definition}
Here and in what follows, $L^2_{\rm loc}(\mm)$ is the space of functions which for every $x\in\X$ coincide with some function in $L^2(\mm)$ on some neighbourhood of $x$. Similarly for other spaces.

Given $f\in\mbox{S}^2(\mbox{X})$, it is possible to prove that there exists a minimal function $|\mbox{D}f|$ in the $\mathfrak{m}$-a.e. sense which can be chosen as $G$ in \eqref{f_sobolev_class}. We call $|\mbox{D}f|$ the \textsl{minimal weak upper gradient} of $f$.

The main calculus properties of minimal weak upper gradients are the following:\\
\textsc{Locality.} If $f,g\in\mbox{S}^2_{\mbox{\scriptsize{loc}}}(\mbox{X})$ and $N\in\mathscr{B}(\mathbb{R})$ satisfies $\mathcal{L}^1(N)=0$, then
\begin{equation}\label{f_sobolev_func_1}
\begin{array}{ll}
|\mbox{D}f|=0\\
|\mbox{D}f|=|\mbox{D}g|
\end{array}\quad\begin{array}{ll}
\mathfrak{m}\mbox{-a.e. in }f^{-1}(N)\mbox{,}\\
\mathfrak{m}\mbox{-a.e. in }\{f=g\}\mbox{.}
\end{array}
\end{equation}
\textsc{Lower semicontinuity.} Let $(f_n)_n\subseteq\mbox{S}^2(\mbox{X})$ satisfy $\lim_n f_n(x)=f(x)$ for $\mathfrak{m}$-a.e. $x\in\mbox{X}$, for some $f:\,\mbox{X}\to\mathbb{R}$. Assume that $|\mbox{D}f_n|\rightharpoonup G$ weakly in $L^2(\mathfrak{m})$ as $n\to\infty$, for some $G\in L^2(\mathfrak{m})$. Then $f\in\mbox{S}^2(\mbox{X})$ and
\begin{equation}\label{f_sobolev_func_2}
|\mbox{D}f|\leq G\quad\mathfrak{m}\mbox{-a.e. in }\mbox{X}\mbox{.}
\end{equation}
\textsc{Subadditivity.} If $f,g\in\mbox{S}^2_{\mbox{\scriptsize{loc}}}(\mbox{X})$ and $\alpha,\beta\in\mathbb{R}$, then $\alpha f+\beta g\in {\rm S}^2_{\mbox{\scriptsize{loc}}}(\X)$ and
\begin{equation}\label{f_sobolev_func_3}
\big|\mbox{D}(\alpha f+\beta g)\big|\leq|\alpha||\mbox{D}f|+|\beta||\mbox{D}g|\quad\mathfrak{m}\mbox{-a.e. in }\mbox{X}\mbox{.}
\end{equation}
\textsc{Leibniz rule.} If $f,g\in\mbox{S}^2_{\mbox{\scriptsize{loc}}}(\mbox{X})\cap L^\infty_{\mbox{\scriptsize{loc}}}(\mathfrak{m})$, then $fg\in \mbox{S}^2_{\mbox{\scriptsize{loc}}}(\mbox{X})\cap L^\infty_{\mbox{\scriptsize{loc}}}(\mathfrak{m})$ and 
\begin{equation}\label{f_sobolev_funct_4}
\big|\mbox{D}(fg)\big|\leq|f||\mbox{D}g|+|g||\mbox{D}f|\quad\mathfrak{m}\mbox{-a.e. in }\mbox{X}\mbox{.}
\end{equation}
\textsc{Chain rule.} Let $f\in\mbox{S}^2_{\mbox{\scriptsize{loc}}}(\mbox{X})$ and $\varphi\in\mbox{LIP}(\mathbb{R})$. Then $\varphi\circ f\in\mbox{S}^2_{\mbox{\scriptsize{loc}}}(\mbox{X})$ and
\begin{equation}\label{f_sobolev_func_5}
\big|\mbox{D}(\varphi\circ f)\big|=|\varphi'|\circ f\,|\mbox{D}f|\quad\mathfrak{m}\mbox{-a.e. in }\mbox{X}\mbox{,}
\end{equation}
where $|\varphi'|\circ f$ is arbitrarily defined at the non-differentiability points of $\varphi$.
Notice that for $f\in \mbox{LIP}(\mbox{X})$, we trivially have that \eqref{f_sobolev_class} is satisfied for $G:=\lip(f)$, so that $f\in {\rm S}^2_{\rm loc}(\X)$ and
\begin{equation}\label{f_Lip_is_Sob_1}
|\mbox{D}f|\leq\mbox{lip}(f)\quad\mathfrak{m}\mbox{-a.e. in }\mbox{X}\mbox{.}
\end{equation}
It is a remarkable result of Cheeger (\cite{Cheeger00}) that on PI spaces the equality holds:
\begin{equation}\label{f_Lip_is_Sob_2}
(\mbox{X},\mathsf{d},\mathfrak{m})\mbox{ PI space, }f\in \mbox{LIP}(\mbox{X})\quad\Longrightarrow\quad|\mbox{D}f|=\mbox{lip}(f)\;\;\mathfrak{m}\mbox{-a.e. in }\mbox{X}\mbox{.}
\end{equation}
The Sobolev space $W^{1,2}(\X)$ is defined as
\begin{equation}\label{f_sobolev_space_1}
W^{1,2}(\emph{X}):={\rm S}^2({\X})\cap L^2(\mathfrak{m})\mbox{.}
\end{equation}
It turns out that  $W^{1,2}(\mbox{X})$ is a Banach space if endowed with the norm
\begin{equation}\label{f_sobolev_space_2}
{\|f\|}_{W^{1,2}(\mbox{\scriptsize{X}})}:=\sqrt{{\|f\|}^2_{L^2(\mathfrak{m})}+{\big\||\mbox{D}f|\big\|}^2_{L^2(\mathfrak{m})}}\quad\mbox{ for every }f\in W^{1,2}(\mbox{X})\mbox{.}
\end{equation}
We conclude recalling that
\begin{equation}
\label{eq:lipdense}
(\X,\sfd,\mm)\text{ doubling}\qquad\Rightarrow\qquad W^{1,2}\cap {\rm LIP}(\X)\text{ is dense in }W^{1,2}(\X).
\end{equation}
This non-trivial result, which in fact only requires the doubling property of the distance, has been proved in \cite{ACM14}.
%but in general it is not a Hilbert space. We say that $(\mbox{X},\mathsf{d},\mathfrak{m})$ is \textsl{infinitesimally Hilbertian} provided $W^{1,2}(\mbox{X})$ is a Hilbert space.
%
%
%
%
%\begin{theorem}[Lusin density of Lipschitz maps in $W^{1,2}(\rm{X})$]\label{thm_Lusin_density_of_Lip}
%Assume that $(\emph{X},\mathsf{d},\mathfrak{m})$ is doubling and Poincar\'{e}. Let $f\in W^{1,2}(\emph{X})$ and $\varepsilon>0$. Then
%\begin{equation}\label{f_Lusin_density_of_Lip}
%\mbox{there exists }g\in\emph{LIP}(\emph{X})\mbox{ such that }\mathfrak{m}\big(\{f\neq g\}\big)<\varepsilon\mbox{.}
%\end{equation}
%\end{theorem}
%
%
%
%
\subsection{Cotangent and Tangent Modules}
Here we recall some definitions and concepts introduced by the first author in  \cite{Gigli14}, referring to \cite{Gigli14} and \cite{Gigli16} for a more detailed discussion.

Let $(\mbox{X},\mathsf{d},\mathfrak{m})$ be a metric measure space, which will be fixed throughout the whole section.
We first give the definition of $L^2(\mathfrak{m})$-normed $L^\infty(\mathfrak{m})$-module:
\begin{definition}[$L^2(\mathfrak{m})$-normed $L^\infty(\mathfrak{m})$-module]\label{def_L2_normed_module}
Let $\mathscr{M}$ be a Banach space. Then $\mathscr{M}$ is said to be an \emph{$L^2(\mathfrak{m})$-normed $L^\infty(\mathfrak{m})$-module} provided it is endowed with a bilinear map $L^\infty(\mathfrak{m})\times\mathscr{M}\ni(f,v)\mapsto f v\in\mathscr{M}$, called \emph{multiplication}, and a function $|\cdot|:\,\mathscr{M}\to L^2(\mathfrak{m})^+$, called \emph{pointwise norm}, which satisfy the following properties:
\begin{itemize}
\item[\emph{(i)}] $(fg) v=f(g v)$ for every $v\in\mathscr{M}$ and $f,g\in L^\infty(\mathfrak{m})$.
\item[\emph{(ii)}] ${\bf 1}  v=v$ for every $v\in\mathscr{M}$, where ${\bf 1}\in L^\infty(\mm)$ is the function identically 1.
\item[\emph{(iii)}] ${\big\||v|\big\|}_{L^2(\mathfrak{m})}={\|v\|}_\mathscr{M}$ for every $v\in\mathscr{M}$.
\item[\emph{(iv)}] $|f v|=|f|\,|v|\;\;\mathfrak{m}$-a.e. in $\emph{X}$, for every $v\in\mathscr{M}$ and $f\in L^\infty(\mathfrak{m})$.
\end{itemize}
\end{definition}
Given a Borel set $A\in\mathscr{B}(\mbox{X})$, we define the `restriction'  $\mathscr M\restr A$  of $\mathscr M$ to $A$  as
\begin{equation}\label{f_restr_module}
{\mathscr{M}}\restr A:=\big\{v\in\mathscr{M}\,\big|\,\nchi_{A^c}\cdot v=0\big\}\mbox{.}
\end{equation}
Notice that ${\mathscr{M}}\restr A$ inherits the structure of $L^2(\mathfrak{m})$-normed $L^\infty(\mathfrak{m})$-module.

\bigskip

Given two $L^2(\mathfrak{m})$-normed $L^\infty(\mathfrak{m})$-modules $\mathscr{M}$ and $\mathscr{N}$, we say that a map $T:\,\mathscr{M}\to\mathscr{N}$ is a \emph{module morphism} provided it is linear continuous and it satisfies
\begin{equation}\label{f_module_morphism}
T(f v)=f \,T(v)\quad\mbox{ for every }v\in\mathscr{M}\mbox{ and }f\in L^\infty(\mathfrak{m})\mbox{.}
\end{equation}\vspace{-0.8cm}
\begin{definition}[Dual module]\label{def_dual_module}
Let $\mathscr{M}$ be an $L^2(\mathfrak{m})$-normed $L^\infty(\mathfrak{m})$-module. Then we define the \emph{dual module} $\mathscr{M}^*$ of $\mathscr{M}$ as the family of all linear continuous maps $T:\,\mathscr{M}\to L^1(\mathfrak{m})$ such that $T(f v)=f\,T(v)$ holds $\mathfrak{m}$-a.e. in $\rm{X}$ for any $v\in\mathscr{M}$ and $f\in L^\infty(\mathfrak{m})$.
\end{definition}
$\mathscr{M}^*$ naturally comes with the  structure of $L^2(\mathfrak{m})$-normed $L^\infty(\mathfrak{m})$-module:
it is a Banach space with respect to the pointwise vector operations and the operator norm, while the multiplication $f T$ between $f\in L^\infty(\mathfrak{m})$ and $T\in\mathscr{M}^*$ is defined as
\begin{equation}\label{f_dual_multiplication}
(f T)(v):=f\,T(v)\;\;\mathfrak{m}\mbox{-a.e. in }{\rm X},\quad\mbox{ for every }v\in\mathscr{M}
\end{equation}
and the pointwise norm $|T|_*$ of $T\in\mathscr{M}^*$ is given by
\begin{equation}\label{f_dual_ptwse_norm}
|T|_*:=\underset{\substack{v\in\mathscr{M}\mbox{,} \\ |v|\leq 1\;\mathfrak{m}\mbox{\scriptsize{-a.e.}}}}{\mbox{ess}\;\mbox{sup}}
\big|T(v)\big|\quad\mbox{ for every }T\in\mathscr{M}^*\mbox{.}
\end{equation}
We recall the notion of local dimension:
\begin{definition}[Local dimension of normed modules]\label{def_local_dim_module}
Let $\mathscr{M}$ be an $L^2(\mathfrak{m})$-normed $L^\infty(\mathfrak{m})$-module. Let $A\in\mathscr{B}(\emph{X})$ be such that $\mathfrak{m}(A)>0$. Then:
\begin{itemize}
\item[\emph{(i)}] Finitely many elements $v_1,\ldots,v_n\in\mathscr{M}$ are said to be \emph{independent} on $A$ provided for any $f_1,\ldots,f_n\in L^\infty(\mathfrak{m})$ it holds that
\begin{equation}\label{f_independence_in_modules}
\nchi_A\sum_{i=1}^n\,f_i v_i=0\quad\Longleftrightarrow\quad f_i=0\;\;\mathfrak{m}\mbox{-a.e. in }A,\quad\mbox{ for every }i=1,\ldots,n\mbox{.}
\end{equation}
\item[\emph{(ii)}] We say that a set $S\subset \mathscr M$ \emph{generates} $\mathscr M\restr A$ provided $\mathscr M\restr A$ is the closure of the set of finite sums of objects of the form $\nchi_A fv$ for $f\in L^\infty(\mm)$ and $v\in S$.
\item[\emph{(iii)}] We say that some elements $v_1,\ldots,v_n\in\mathscr{M}$ constitute a \emph{basis} for $\mathscr{M}\restr A$ if they are independent on $A$ and generate $\mathscr M\restr A$.
\item[\emph{(iv)}] The \emph{local dimension} of $\mathscr{M}$ on $A$ is defined to be equal to $n\in\N$ if $\mathscr{M}$ admits a basis of cardinality $n$ on $A$.
\end{itemize}
\end{definition}
Observe that the notion of local dimension is well-defined, in the sense that two different bases for $\mathscr{M}$ on $A$ must necessarily have the same cardinality. 
\bigskip

By using the language of $L^2(\mathfrak{m})$-normed $L^\infty(\mathfrak{m})$-modules described so far, we can now introduce the cotangent module $L^2(T^*\rm{X})$ associated to $(\rm{X},\mathsf{d},\mathfrak{m})$. Its definition is based upon the following result, whose proof can be found in \cite{Gigli16}:
\begin{theorem}\label{thm_characterization_cotangent_module}
There exists (up to unique isomorphism) a unique couple $(\mathscr{M},\rm{d})$, where $\mathscr{M}$ is an $L^2(\mathfrak{m})$-normed $L^\infty(\mathfrak{m})$-module and $\rm{d}:\,W^{1,2}(\rm{X})\to\mathscr{M}$ is a linear map, such that
\begin{itemize}
\item[\rm(i)] $|\emph{d}f|=|\emph{D}f|$ holds $\mathfrak{m}$-a.e. in $\emph{X}$, for every $f\in W^{1,2}(\rm{X})$,
\item[\rm(ii)] $\big\{\emph{d}f\,:\,f\in W^{1,2}(\rm{X})\big\}$ generates $\mathscr{M}$ on $\rm{X}$.
\end{itemize}
Namely, if two couples $(\mathscr{M},\rm{d})$ and $(\mathscr{M}',\rm{d}')$ as above fulfill both (i) and (ii), then there exists a unique module isomorphism $\Phi:\,\mathscr{M}\to\mathscr{M}'$ such that $\Phi\circ\rm{d}=\rm{d}'$.
\end{theorem}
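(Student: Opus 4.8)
The plan is to prove existence by an explicit construction of a concrete model for $(\mathscr M,\mathrm d)$ and then deduce the uniqueness statement from properties (i) and (ii) alone.

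\textbf{Existence, step 1: the pre-cotangent module.} First I would introduce the set of all sequences $\{(f_i,A_i)\}_{i\in\N}$ where $(A_i)_i$ is a Borel partition of $\X$, each $f_i\in W^{1,2}(\X)$, and $\sum_i\int_{A_i}|\mathrm Df_i|^2\,\mathrm d\mm<+\infty$. I declare $\{(f_i,A_i)\}\sim\{(g_j,B_j)\}$ if $|\mathrm D(f_i-g_j)|=0$ $\mm$-a.e.\ on $A_i\cap B_j$ for all $i,j$; the subadditivity and locality of the minimal weak upper gradient recorded in \eqref{f_sobolev_func_1}--\eqref{f_sobolev_func_3} make this an equivalence relation. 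Call $\mathcal P$ the quotient and $[\{(f_i,A_i)\}]$ the class of a sequence. On $\mathcal P$ I would define the vector operations by passing to common refinements of the partitions, the pointwise norm $|[\{(f_i,A_i)\}]|:=\sum_i\chi_{A_i}|\mathrm Df_i|\in L^2(\mm)^+$ (well posed by the very choice of $\sim$, using subadditivity again), and the norm $\|v\|:=\big\||v|\big\|_{L^2(\mm)}$; a routine check shows these are consistent and that $\|\cdot\|$ is a genuine norm. Let $L^2(T^*\X)$ be the completion of $(\mathcal P,\|\cdot\|)$.

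\textbf{Existence, step 2: module structure and the differential.} Multiplication by a simple function $h=\sum_k c_k\chi_{E_k}$ is defined by $h\cdot[\{(f_i,A_i)\}]:=[\{(c_kf_i,A_i\cap E_k)\}]$; this operator is bounded by $\|h\|_{L^\infty(\mm)}$, so it extends uniquely and continuously to all of $L^\infty(\mm)$, and the resulting bilinear map together with $|\cdot|$ satisfies axioms (i)--(iv) of Definition \ref{def_L2_normed_module} (the pointwise identities are verified on $\mathcal P$ and then pass to the limit). Finally set $\mathrm df:=[\{(f,\X)\}]$ for $f\in W^{1,2}(\X)$: linearity of $\mathrm d$ follows by applying subadditivity to both $f+g$ and $-(f+g)$, property (i) is immediate from the definition of $|\cdot|$ on $\mathcal P$, and property (ii) holds because finite sums $\sum_k\chi_{A_k}\,\mathrm df_k$ are dense in $\mathcal P$ (truncate the partition and use the summability condition), hence in its completion $L^2(T^*\X)$.

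\textbf{Uniqueness.} Let $(\mathscr M,\mathrm d)$ and $(\mathscr M',\mathrm d')$ both satisfy (i) and (ii). On the dense set of finite sums $v=\sum_{i=1}^n\chi_{A_i}\,\mathrm df_i$ in $\mathscr M$ I put $\Phi(v):=\sum_{i=1}^n\chi_{A_i}\,\mathrm d'f_i$. This is well defined: if $\sum_i\chi_{A_i}\mathrm df_i=\sum_j\chi_{B_j}\mathrm dg_j$ in $\mathscr M$, then multiplying by $\chi_{A_i\cap B_j}$ and using linearity gives $\chi_{A_i\cap B_j}\,\mathrm d(f_i-g_j)=0$, so by (i) $|\mathrm D(f_i-g_j)|=0$ $\mm$-a.e.\ on $A_i\cap B_j$, and then by (i) for $\mathscr M'$ also $\chi_{A_i\cap B_j}\,\mathrm d'(f_i-g_j)=0$; summing over $i,j$ gives $\sum_i\chi_{A_i}\mathrm d'f_i=\sum_j\chi_{B_j}\mathrm d'g_j$. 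Property (i) for both modules further gives $|\Phi(v)|=\sum_i\chi_{A_i}|\mathrm Df_i|=|v|$ $\mm$-a.e., so $\Phi$ is norm preserving and extends to a linear isometry $\Phi:\mathscr M\to\mathscr M'$; its image is closed and contains every $\chi_A\mathrm d'f$, hence is all of $\mathscr M'$ by (ii), so $\Phi$ is bijective. It commutes with multiplication by simple functions by construction, hence with all $h\in L^\infty(\mm)$ by continuity, so it is a module isomorphism, and $\Phi\circ\mathrm d=\mathrm d'$ is the case $n=1$, $A_1=\X$. Any other module isomorphism $\Psi$ with $\Psi\circ\mathrm d=\mathrm d'$ satisfies $\Psi(\chi_A\mathrm df)=\chi_A\mathrm d'f=\Phi(\chi_A\mathrm df)$, so $\Psi$ and $\Phi$ agree on a generating set and therefore everywhere.

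\textbf{Main obstacle.} The delicate part is entirely in the construction: checking that $\sim$ behaves well under common refinements, that the operations and the pointwise norm descend to $\mathcal P$, and—above all—that multiplication by simple functions is bounded so that it extends continuously to $L^\infty(\mm)$ while preserving axioms (ii) and (iv). All of this rests on repeated careful use of the locality and subadditivity of $|\mathrm D\cdot|$ in \eqref{f_sobolev_func_1}--\eqref{f_sobolev_func_3}. By contrast, once (i) and (ii) are available the uniqueness argument is essentially forced.
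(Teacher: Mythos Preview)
The paper does not actually prove this theorem: it states the result and refers the reader to \cite{Gigli16} for the proof. Your proposal reproduces precisely the standard construction that appears in \cite{Gigli14} and \cite{Gigli16}: build a pre-cotangent module out of formal sums $\sum_i\chi_{A_i}\,\mathrm df_i$ modulo the relation dictated by locality of the minimal weak upper gradient, complete, and extend the $L^\infty$-action by density; then derive uniqueness from (i) and (ii) by the obvious isometry on simple elements. Both the existence and uniqueness parts are correct as written, and they coincide with the argument in the cited references, so there is nothing to compare.

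Two minor points worth tightening. First, in the uniqueness step you invoke density of finite sums $\sum_i\chi_{A_i}\mathrm df_i$ in $\mathscr M$; strictly speaking, (ii) gives density of finite sums $\sum_i h_i\,\mathrm df_i$ with $h_i\in L^\infty(\mm)$, and you should say explicitly that approximating each $h_i$ by simple functions and using linearity of $\mathrm d$ (to absorb constants, $c\,\chi_E\mathrm df=\chi_E\mathrm d(cf)$) reduces to your case. Second, when passing to the completion you should note that the pointwise norm extends to $L^2(T^*\X)$ as an $L^2(\mm)$-valued map by the inequality $\big||v|-|w|\big|\leq|v-w|$ (itself a consequence of subadditivity), so that axioms (iii)--(iv) of Definition~\ref{def_L2_normed_module} survive the completion. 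These are routine, but since you flag the extension of the module structure as the ``main obstacle'' it is worth recording them.
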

\begin{definition}[Cotangent module and differential]\label{def_cotangent_module} The module provided by the previous theorem is called \emph{cotangent module} and denoted by $L^2(T^*\rm{X})$; its elements are called \emph{$1$-forms} on $\rm{X}$. The map $\d$ will be called \emph{differential}.
\end{definition}
The tangent module is then introduced by duality:
\begin{definition}[Tangent module]\label{def_tangent_module} We call \emph{tangent module} the dual of $L^2(T^*\X)$ and denote it by $L^2(T\rm{X})$.  Its elements are called \emph{vector fields} on $\rm{X}$.
\end{definition}
In order to keep consistency with the smooth setting, we shall indicate by $|\cdot|_*$ and $|\cdot|$ the pointwise norms of $L^2(T^*\rm{X})$ and $L^2(T\rm{X})$, respectively, even if we defined $L^2(T\rm{X})$ as the dual of $L^2(T^*\rm{X})$ and not vice versa.
\bigskip

We conclude the section discussing the case of $\X=\R^k$. Let us denote by $L^2(\R^k,\R^k)$ the standard space of $L^2$ vector fields on $\R^k$ and by $L^2(\R^k,(\R^k)^*)$ its dual, i.e.\ the space of $L^2$ 1-forms. Notice that the dual of $L^2(\R^k,(\R^k)^*)$ is $L^2(\R^k,\R^k)$.

We know that the Sobolev space $W^{1,2}(\R^k)$ as defined here coincides with the classically defined one via distributional derivatives and that for $f\in W^{1,2}(\R^k)$ if we consider  its distributional differential, which for a moment we denote $\hat\d f$ and which naturally belongs to $L^2(\R^k,(\R^k)^*)$, we have that its norm $|\hat\d f|$ coincides with the minimal weak upper gradient $|{\rm D}f|$ (see \cite{AmbrosioGigliSavare11}). Also, it is readily verified that 1-forms of the kind $\sum_{i=1}^n\nchi_{A_i}\hat \d f_i$, for $n\in\N$, $(A_i)$ a partition of $\R^k$ and $(f_i)\subset W^{1,2}(\R^k)$, are dense in $L^2(\R^k,(\R^k)^*)$.  Thanks to Theorem \ref{thm_characterization_cotangent_module}, these facts are sufficient to conclude that the `concrete' space of $L^2$ 1-forms $L^2(\R^k,(\R^k)^*)$ and the abstract cotangent module $L^2(T^*\R^k)$ can be canonically identified by an isomorphism which sends $\hat \d f$ to $\d f$.

Once this identification is done, it  follows that also the space of $L^2$ vector fields $L^2(\R^k,\R^k)$  can be canonically identified with the tangent module $L^2(T\R^k)$. Such identification also allows us to identify,  for a given Borel set $E\subset \R^k$, the restricted  module $L^2(T\R^k)\restr E$ with the space $L^2(E,\R^k)$.
\section{Maps of Bounded Deformation}
Consider two metric measure spaces $(\mbox{X},\mathsf{d}_{\mbox{\scriptsize{X}}},\mathfrak{m}_{\mbox{\scriptsize{X}}})$ and $(\mbox{Y},\mathsf{d}_{\mbox{\scriptsize{Y}}},\mathfrak{m}_{\mbox{\scriptsize{Y}}})$.
In Section 2.4 of \cite{Gigli14}, it is described how the notions of pullback of $1$-forms and of differential can be
built for a special class of mappings between $\mbox{X}$ and $\mbox{Y}$, which are said to be of bounded deformation.

Here we generalize those constructions to mappings that are not defined on the whole $\mbox{X}$, but only on some Borel subset $E$ of $\mbox{X}$.
More precisely, we shall first define what it means for a map $\varphi:\,E\to\mbox{Y}$ to be of bounded deformation.
Then we will prove that, under some additional assumptions on the spaces $\mbox{X}$ and $\mbox{Y}$, the function $\varphi$ canonically induces a pullback operator $\varphi^*:\,L^2(T^*\mbox{Y})\to{L^2(T^*\mbox{X})|}_E$. Finally, whenever $\varphi$ is invertible with its image and its inverse is of bounded deformation, also the differential $\mbox{d}\varphi:\,{L^2(T\mbox{X})|}_E\to L^2(T\mbox{Y})$ is well-defined.
The necessity of requiring further hypotheses on $\mbox{X}$ and $\mbox{Y}$ is due to some technical complications, which will become soon clear in the proof of Proposition \ref{prop_Sobolev_chain_rule_for_Lipschitz_maps}.

We start with the following definition:
\begin{definition}[Map of bounded deformation]\label{def_map_bdd_deform}
Let $(\emph{X},\mathsf{d}_{\emph{X}},\mathfrak{m}_{\emph{X}})$, $(\emph{Y},\mathsf{d}_{\emph{Y}},\mathfrak{m}_{\emph{Y}})$ be metric measure spaces. Fix $E\in\mathscr{B}(\emph{X})$.
Then a map $\varphi:\,E\to\emph{Y}$ is said to be \emph{of bounded deformation} if it is Lipschitz and satisfies
\begin{equation}\label{f_bdd_deform}
{\varphi}_{*}(\mathfrak{m}_{\emph{X}}\restr E)\leq{}C\,\mathfrak{m}_{\emph{Y}}\quad
\mbox{ for a suitable constant }C>0\mbox{.}
\end{equation}
The least such $C$ is called \emph{compression constant} and denoted by ${\rm Comp}(\varphi)$.
\end{definition}
The first property of this class of maps is consequence
of what discussed in Section \ref{subsect_Properties_of_Lipschitz_Functions}:
\begin{proposition}\label{prop_Sobolev_chain_rule_for_Lipschitz_maps}
Let $(\emph{X},\mathsf{d}_{\emph{X}},\mathfrak{m}_{\emph{X}})$ be a doubling space and $(\emph{Y},\mathsf{d}_{\emph{Y}},\mathfrak{m}_{\emph{Y}})$ be a PI  space.
Also, let $E\in\mathscr{B}(\emph{X})$ and let  $\varphi:\,E\to\emph{Y}$ be of bounded deformation.
Then for every $f\in\emph{LIP}(\emph{Y})$ it holds that
\begin{equation}\label{f_Sobolev_chain_rule_for_Lipschitz_maps}
|\emph{D}g|\leq\emph{Lip}(\varphi)\,|\emph{D}f|\circ\varphi\;\;\;\mathfrak{m}_{\emph{X}}\mbox{-a.e. in }E\mbox{, }\quad
\mbox{ for every }g\in\emph{LIP}(\emph{X})\mbox{ with }{g|}_{E}=f\circ\varphi\mbox{.}
\end{equation}
\end{proposition}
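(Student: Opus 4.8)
The plan is to reduce the statement to the chain rule for minimal weak upper gradients together with Cheeger's identity $|\mathrm Df|=\mathrm{lip}(f)$ valid on PI spaces. Fix $f\in\mathrm{LIP}(\mathrm Y)$ and $g\in\mathrm{LIP}(\mathrm X)$ with ${g|}_E=f\circ\varphi$. Since $\mathrm Y$ is PI, Cheeger's theorem \eqref{f_Lip_is_Sob_2} gives $|\mathrm Df|=\mathrm{lip}(f)$ $\mathfrak m_{\mathrm Y}$-a.e.\ on $\mathrm Y$. Because $\varphi$ is of bounded deformation, $\varphi_*(\mathfrak m_{\mathrm X}\restr E)\leq C\,\mathfrak m_{\mathrm Y}$, so the $\mathfrak m_{\mathrm Y}$-a.e.\ identity $|\mathrm Df|=\mathrm{lip}(f)$ pulls back to $|\mathrm Df|\circ\varphi=\mathrm{lip}(f)\circ\varphi$ $\mathfrak m_{\mathrm X}$-a.e.\ on $E$; this is precisely where the compression bound is used and is the reason $\mathrm Y$ must be PI rather than merely doubling. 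Hence it suffices to prove
\[
|\mathrm Dg|\leq\mathrm{Lip}(\varphi)\;\mathrm{lip}(f)\circ\varphi\qquad\mathfrak m_{\mathrm X}\text{-a.e.\ in }E.
\]

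Next I would bring in the results of Section \ref{subsect_Properties_of_Lipschitz_Functions}. On the one hand, \eqref{f_Lip_is_Sob_1} gives $|\mathrm Dg|\leq\mathrm{lip}(g)$ $\mathfrak m_{\mathrm X}$-a.e.\ in $\mathrm X$, in particular in $E$. On the other hand, by the locality of the local Lipschitz constant and Proposition \ref{prop_lip(f|E)=lip(f)} applied with the doubling space $\mathrm X$ and the Borel set $E$, we have $\mathrm{lip}(g)(x)=\mathrm{lip}\big({g|}_E\big)(x)$ for $\mathfrak m_{\mathrm X}$-a.e.\ $x\in E$, the right-hand side being computed in the metric space $\big(E,{\mathsf d_{\mathrm X}|}_{E\times E}\big)$. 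But ${g|}_E=f\circ\varphi$ with $\varphi:E\to\mathrm Y$ Lipschitz of constant $\mathrm{Lip}(\varphi)$, so the chain-rule inequality for the local Lipschitz constant, \eqref{f_chain_rule_loc_lip_const_1}, yields $\mathrm{lip}\big({g|}_E\big)=\mathrm{lip}(f\circ\varphi)\leq\mathrm{Lip}(\varphi)\,\mathrm{lip}(f)\circ\varphi$ pointwise on $E$. Chaining these three facts gives $|\mathrm Dg|\leq\mathrm{lip}(g)=\mathrm{lip}\big({g|}_E\big)\leq\mathrm{Lip}(\varphi)\,\mathrm{lip}(f)\circ\varphi$ $\mathfrak m_{\mathrm X}$-a.e.\ in $E$, and combining with the pull-back of Cheeger's identity finishes the proof.

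The routine point to check carefully is the measure-theoretic pull-back step: if $N\subset\mathrm Y$ is the $\mathfrak m_{\mathrm Y}$-negligible set where $|\mathrm Df|\neq\mathrm{lip}(f)$, then $\varphi^{-1}(N)\cap E$ is $\varphi_*(\mathfrak m_{\mathrm X}\restr E)$-negligible hence $\mathfrak m_{\mathrm X}\restr E$-negligible by \eqref{f_bdd_deform}, so the identity holds $\mathfrak m_{\mathrm X}$-a.e.\ on $E$; one should also note $\varphi$ is Borel so these preimages are Borel. The genuinely delicate point — and the reason, as the text flags, that extra hypotheses on $\mathrm X$ and $\mathrm Y$ are imposed — is the appeal to Proposition \ref{prop_lip(f|E)=lip(f)}: it is only there that the doubling property of $\mathrm X$ is essential, since passing from $\mathrm{lip}(g)$ on $\mathrm X$ to $\mathrm{lip}\big({g|}_E\big)$ on the abstract metric space $E$ (which need not be doubling in any useful way on its own) requires the density-point argument of Lemma \ref{lemma_pts_1-density}. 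Everything else is a concatenation of inequalities already recorded in the preliminaries.
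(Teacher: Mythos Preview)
Your proof is correct and follows exactly the same chain of inequalities as the paper: $|\mathrm D g|\leq\lip(g)=\lip(g|_E)=\lip(f\circ\varphi)\leq\Lip(\varphi)\,\lip(f)\circ\varphi=\Lip(\varphi)\,|\mathrm D f|\circ\varphi$, invoking \eqref{f_Lip_is_Sob_1}, Proposition~\ref{prop_lip(f|E)=lip(f)}, \eqref{f_chain_rule_loc_lip_const_1} and \eqref{f_Lip_is_Sob_2} in that order. Your explicit verification that the compression bound \eqref{f_bdd_deform} is what allows the $\mathfrak m_{\mathrm Y}$-a.e.\ identity from Cheeger's theorem to be pulled back along $\varphi$ is a point the paper leaves implicit.
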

\begin{proof} $\mm_\X$-a.e.\ on $E$ we have
\begin{align*}
|\mbox{D}g|&\leq\mbox{lip}(g)&&\text{by \eqref{f_Lip_is_Sob_1}}\\
&=\mbox{lip}({g|}_E)&&\text{by Proposition \ref{prop_lip(f|E)=lip(f)}}\\
&=\mbox{lip}(f\circ\varphi)&\\
&\leq\mbox{Lip}(\varphi)\,\mbox{lip}(f)\circ\varphi&&\text{by \eqref{f_chain_rule_loc_lip_const_1}}\\
&=\mbox{Lip}(\varphi)\,|\mbox{D}f|\circ\varphi &&\text{by \eqref{f_Lip_is_Sob_2}},
\end{align*}
 which is the claim.
\end{proof}
\subsection{Pullback}
In order to define the pullback $\varphi^*$ of a map of bounded deformation $\varphi$, it is first convenient to prove the following auxiliary result: 
\begin{proposition}\label{prop_auxiliary_for_pullback}
Let $(\emph{X},\mathsf{d}_{\emph{X}},\mathfrak{m}_{\emph{X}})$ be a doubling space and $(\emph{Y},\mathsf{d}_{\emph{Y}},\mathfrak{m}_{\emph{Y}})$ be a PI  space. Also, let  $E\in\mathscr{B}(\emph{X})$ and let $\varphi:\,E\to\emph{Y}$ be of bounded deformation.

Then there exists a unique linear and continuous function ${\mathsf{T}}_{\varphi}:\,W^{1,2}(\emph{Y})\to{}{L^{2}(T^{*}\emph{X})|}_{E}$ with the following property: for every $f\in W^{1,2}(\emph{Y})\cap\emph{LIP}(\emph{Y})$, it holds that
\begin{equation}\label{f_auxiliary_for_pullback_1}
\begin{array}{ll}
\nchi_{G}{\mathsf{T}}_{\varphi}(f)=\nchi_{G}\,\emph{d}g\\
\quad
\end{array}\quad\begin{array}{ll}
\mbox{ whenever }G\subseteq{}E\mbox{ is a bounded Borel set and}\\
\mbox{ }g\in{}W^{1,2}(\emph{X})\cap\emph{LIP}(\emph{X})\mbox{ fulfills }{g}\restr G=f\circ{\varphi}\restr G\mbox{.}
\end{array}
\end{equation}
Moreover, the function ${\mathsf{T}}_{\varphi}$ satisfies
\begin{equation}\label{f_auxiliary_for_pullback_2}
{\big|{\mathsf{T}}_{\varphi}(f)\big|}_{*}\leq\emph{Lip}(\varphi)\,|\emph{D}f|\circ\varphi\;\;\;\mathfrak{m}_{\emph{X}}\mbox{-a.e. in }E\mbox{,}\quad\mbox{ for every }f\in{}W^{1,2}(\emph{Y})\mbox{.}
\end{equation}
\end{proposition}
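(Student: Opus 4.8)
The plan is to construct $\mathsf{T}_\varphi$ first on the dense subspace $W^{1,2}(\mathrm Y)\cap\mathrm{LIP}(\mathrm Y)$ via the prescription \eqref{f_auxiliary_for_pullback_1}, check it is well defined there, estimate its operator norm using \eqref{f_auxiliary_for_pullback_2}, and then extend by density and continuity using \eqref{eq:lipdense}; uniqueness will be immediate from the same density.

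First I would fix $f\in W^{1,2}(\mathrm Y)\cap\mathrm{LIP}(\mathrm Y)$ and a bounded Borel set $G\subseteq E$. Since $\varphi$ is Lipschitz on $E$, the function $f\circ\varphi\restr G$ is Lipschitz on $G$, so by the McShane-type extension \eqref{f_McShane-Whitney} there is some $g\in\mathrm{LIP}(\mathrm X)$ with $g\restr G=f\circ\varphi\restr G$; multiplying $g$ by a suitable Lipschitz cutoff we may assume $g$ has bounded support, hence $g\in W^{1,2}(\mathrm X)\cap\mathrm{LIP}(\mathrm X)$. I would then show that $\nchi_G\,\mathrm dg\in L^2(T^*\mathrm X)\restr E$ does not depend on the choice of such $g$: if $g_1,g_2$ are two such extensions, then $g_1=g_2$ on $G$, so by the locality property \eqref{f_sobolev_func_1} we get $|\mathrm D(g_1-g_2)|=0$ $\mm_\mathrm X$-a.e.\ on $G$, whence $|\mathrm d(g_1-g_2)|=0$ $\mm_\mathrm X$-a.e.\ on $G$ by Theorem \ref{thm_characterization_cotangent_module}(i), i.e.\ $\nchi_G\,\mathrm dg_1=\nchi_G\,\mathrm dg_2$. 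Next, writing $E$ as an increasing union of bounded Borel sets $G_n$ and checking the obvious compatibility $\nchi_{G_n}(\nchi_{G_{n+1}}\mathrm dg_{n+1})=\nchi_{G_n}\mathrm dg_n$ (again by locality, since the two relevant Lipschitz extensions agree on $G_n$), I would glue the pieces: the resulting object, call it $\mathsf T_\varphi(f):=\lim_n \nchi_{G_n}\mathrm dg_n$, lies in $L^2(T^*\mathrm X)\restr E$ provided the $L^2$-norms stay bounded, which is exactly where \eqref{f_auxiliary_for_pullback_2} enters.

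To get the pointwise bound I would use Proposition \ref{prop_Sobolev_chain_rule_for_Lipschitz_maps} applied to $g$ on $G$: it gives $|\mathrm Dg|\leq\mathrm{Lip}(\varphi)\,|\mathrm Df|\circ\varphi$ $\mm_\mathrm X$-a.e.\ on $G$, hence $|\nchi_G\mathrm dg|_*=\nchi_G|\mathrm Dg|\leq\mathrm{Lip}(\varphi)\,|\mathrm Df|\circ\varphi$ $\mm_\mathrm X$-a.e.\ on $E$ (the right side is defined on all of $E$). This bound is uniform in $G$, so the family $\{\nchi_{G_n}\mathrm dg_n\}_n$ is Cauchy in $L^2(T^*\mathrm X)$ — the increments are supported on $G_{n+1}\setminus G_n$ and controlled by the tail of $\int_E \mathrm{Lip}(\varphi)^2\,|\mathrm Df|^2\circ\varphi\,\d\mm_\mathrm X$, which is finite because $|\mathrm Df|\in L^2(\mm_\mathrm Y)$ and $\varphi_*(\mm_\mathrm X\restr E)\leq C\,\mm_\mathrm Y$ by bounded deformation. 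Thus $\mathsf T_\varphi(f)$ is well defined, satisfies \eqref{f_auxiliary_for_pullback_1} (for an arbitrary bounded $G$, intersect with some $G_n$ and use locality once more), and obeys \eqref{f_auxiliary_for_pullback_2}. Linearity of $f\mapsto\mathsf T_\varphi(f)$ on $W^{1,2}(\mathrm Y)\cap\mathrm{LIP}(\mathrm Y)$ follows from subadditivity \eqref{f_sobolev_func_3} together with the fact that a Lipschitz extension of $f_1\circ\varphi$ plus one of $f_2\circ\varphi$ is a legitimate extension of $(f_1+f_2)\circ\varphi$, and continuity is the estimate $\|\mathsf T_\varphi(f)\|_{L^2(T^*\mathrm X)}\leq\mathrm{Lip}(\varphi)\sqrt{\mathrm{Comp}(\varphi)}\,\|{|\mathrm Df|}\|_{L^2(\mm_\mathrm Y)}$ obtained by integrating \eqref{f_auxiliary_for_pullback_2}.

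Finally, since $W^{1,2}(\mathrm Y)\cap\mathrm{LIP}(\mathrm Y)$ is dense in $W^{1,2}(\mathrm Y)$ by \eqref{eq:lipdense} (the PI assumption gives doubling), the bounded linear map $\mathsf T_\varphi$ extends uniquely to all of $W^{1,2}(\mathrm Y)$, and the pointwise bound \eqref{f_auxiliary_for_pullback_2} passes to the limit: taking Lipschitz $f_n\to f$ in $W^{1,2}(\mathrm Y)$, one has $|\mathrm Df_n|\circ\varphi\to|\mathrm Df|\circ\varphi$ in $L^2(\mm_\mathrm X\restr E)$ (again by bounded deformation) along a subsequence converging $\mm_\mathrm X$-a.e., while $|\mathsf T_\varphi(f_n)|_*\to|\mathsf T_\varphi(f)|_*$ in $L^2$, so the inequality survives $\mm_\mathrm X$-a.e.\ on $E$. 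Uniqueness of $\mathsf T_\varphi$ with property \eqref{f_auxiliary_for_pullback_1} is clear: any two such maps agree on the dense set $W^{1,2}(\mathrm Y)\cap\mathrm{LIP}(\mathrm Y)$ — indeed on such $f$, for every bounded $G\subseteq E$ both maps equal $\nchi_G\mathrm dg$, and since $E$ is covered by countably many bounded Borel sets they coincide — hence everywhere by continuity. The main obstacle is the well-definedness/gluing argument: making sure that the locally defined differentials $\nchi_G\mathrm dg$ patch consistently into a single element of the restricted module and that the resulting object actually has finite $L^2$-norm, which is precisely why the bounded-deformation hypothesis (to push the measure forward) and the PI hypothesis on $\mathrm Y$ (to identify $|\mathrm Df|$ with $\mathrm{lip}(f)$ in Proposition \ref{prop_Sobolev_chain_rule_for_Lipschitz_maps}) are both needed.
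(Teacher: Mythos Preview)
Your proof is correct and follows essentially the same route as the paper: define $\mathsf{T}_\varphi$ on $W^{1,2}(\mathrm Y)\cap\mathrm{LIP}(\mathrm Y)$ via local Lipschitz extensions on bounded pieces of $E$, use Proposition~\ref{prop_Sobolev_chain_rule_for_Lipschitz_maps} for the pointwise bound and well-definedness, then extend by the density \eqref{eq:lipdense}. The only cosmetic difference is that the paper glues over a \emph{disjoint} Borel partition $E=\bigcup_i E_i$ (so the sum $\sum_i\nchi_{E_i}\d g_i$ is defined directly), whereas you use an \emph{increasing} exhaustion $G_n\uparrow E$ and a Cauchy-in-$L^2$ argument; both are equivalent packagings of the same locality-plus-gluing step.
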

\begin{proof} Write $E=\cup_{i\in\N}E_i$ with the $E_i$'s Borel, bounded and disjoint and let $f\in W^{1,2}(\mbox{Y})\cap {\rm LIP}({\rm Y})$. For every $i\in \N$, let $g_i\in{\rm LIP}({\rm X})\cap W^{1,2}({\rm X})$ be such that $g_i=f\circ\varphi$ on $E_i$ and notice that Proposition \ref{prop_Sobolev_chain_rule_for_Lipschitz_maps} grants that
\[
\big|\sum_i\nchi_{E_i}\d g_i\big|_*=\sum_i\nchi_{E_i}|\d g_i|_*\leq \Lip(\varphi)\sum_i\nchi_{E_i}|\d f|_*\circ\varphi=\Lip(\varphi)\nchi_E|\d f|_*\circ\varphi.
\]
From this inequality it easily follows that the quantity  ${\sf T}_\varphi(f):=\sum_i\nchi_{E_i}\d g_i$ depends only on $f$ and not on the choices of the $E_i$'s and $g_i$'s. With such definition of ${\sf T}_\varphi$ on $W^{1,2}({\Y})\cap{\rm LIP}({\Y})$ we see that the property \eqref{f_auxiliary_for_pullback_1} holds by construction and that  the bound \eqref{f_auxiliary_for_pullback_2} holds for $f\in W^{1,2}(\mbox{Y})\cap {\rm LIP}({\rm Y})$. 

Since $\varphi:E\to {\rm Y}$ is of bounded deformation we also have
\[
\|{\sf T}_\varphi(f)\|_{L^{2}(T^{*}{\X})}\leq\|\Lip(\varphi)\nchi_E|\d f|_*\circ\varphi\|_{L^2({\rm X},\mm_{\rm X})}\leq \Lip(\varphi) \sqrt{\Comp(\varphi)}\|f\|_{W^{1,2}({\rm Y})},
\]
showing that ${\sf T}_\varphi$ is continuous w.r.t.\  the $W^{1,2}({\rm Y})$-norm. Since $\Y$ is doubling, we know (recall \eqref{eq:lipdense}) that  $W^{1,2}({\Y})\cap{\rm LIP}({\Y})$  is dense in $W^{1,2}({\Y})$, hence existence and uniqueness of the continuous extension follow, and by continuity we see that the \eqref{f_auxiliary_for_pullback_2} holds as well.
\end{proof}
By using the fact that $\big\{\mbox{d}f\,:\,f\in W^{1,2}(\mbox{X})\big\}$ generates $L^2(T^*\mbox{X})$ in the sense of modules, one can apply the previous proposition to build the pullback operator:
\begin{theorem}[Pullback of a map of bounded deformation]\label{thm_pullback_bdd_deform_map}
Let $(\emph{X},\mathsf{d}_{\emph{X}},\mathfrak{m}_{\emph{X}})$ be a doubling space and $(\emph{Y},\mathsf{d}_{\emph{Y}},\mathfrak{m}_{\emph{Y}})$ be a PI  space. Also, let  $E\in\mathscr{B}(\emph{X})$ and let $\varphi:\,E\to\emph{Y}$ be of bounded deformation.

Then there exists a unique linear and continuous function $\varphi^{*}:\,L^{2}(T^{*}\emph{Y})\to{}{L^{2}(T^{*}\emph{X})}\restr {E}$, called the \emph{pullback} of $\varphi$, such that
\begin{equation}\label{f_pullback_bdd_deform_map_1}
\begin{array}{ll}\varphi^{*}(\emph{d}f)={\mathsf{T}}_{\varphi}(f)\\
\varphi^{*}(h\omega)=(h\circ\varphi)\varphi^{*}\omega
\end{array}\quad\begin{array}{ll}
\mbox{ for every }f\in{}W^{1,2}(\emph{Y})\cap{\rm LIP}({\rm Y})\mbox{,}\\
\mbox{ for every }\omega\in{}L^{2}(T^{*}\emph{Y})\mbox{ and }h\in{}L^{\infty}(\mathfrak{m}_{\emph{Y}})\mbox{.}
\end{array}
\end{equation}
Moreover, it holds
\begin{equation}\label{f_pullback_bdd_deform_map_2}
{|\varphi^{*}\omega|}_{*}\leq\emph{Lip}(\varphi)\,{|\omega|}_{*}\circ\varphi\;\;\;\mathfrak{m}_{\emph{X}}\mbox{-a.e. in }E\mbox{,}\quad\mbox{ for every }\omega\in{}L^{2}(T^{*}\emph{Y})\mbox{.}
\end{equation}
\end{theorem}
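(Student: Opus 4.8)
The plan is to construct $\varphi^{*}$ first on a dense subspace of $L^{2}(T^{*}\Y)$ and then extend it by continuity. Call a \emph{simple $1$-form} an element of the type $\omega=\sum_{i=1}^{n}\nchi_{A_{i}}\,\d f_{i}$, where $(A_{i})_{i=1}^{n}$ is a finite Borel partition of $\Y$ and $f_{i}\in W^{1,2}(\Y)$; by Theorem~\ref{thm_characterization_cotangent_module}(ii) the differentials $\{\d f:f\in W^{1,2}(\Y)\}$ generate $L^{2}(T^{*}\Y)$, so — approximating $L^{\infty}(\mm_{\Y})$-coefficients uniformly by simple functions — simple $1$-forms are dense in $L^{2}(T^{*}\Y)$. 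On such $\omega$ I would set
\[
\varphi^{*}\omega:=\sum_{i=1}^{n}\nchi_{\varphi^{-1}(A_{i})}\,{\sf T}_{\varphi}(f_{i})\ \in\ L^{2}(T^{*}\X)\restr E ,
\]
the $\varphi^{-1}(A_{i})$ being Borel since $\varphi$ is Lipschitz on the Borel set $E$.

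The first point is that this formula does not depend on the chosen representation of $\omega$. By linearity of ${\sf T}_{\varphi}$ this reduces to the locality property: if $f\in W^{1,2}(\Y)$ and $\d f=0$ $\mm_{\Y}$-a.e.\ on a Borel set $A\subseteq\Y$, then $\nchi_{\varphi^{-1}(A)}\,{\sf T}_{\varphi}(f)=0$. And this is immediate from \eqref{f_auxiliary_for_pullback_2}: there $|{\sf T}_{\varphi}(f)|_{*}\le\Lip(\varphi)\,|\d f|_{*}\circ\varphi$ holds $\mm_{\X}$-a.e.\ on $E$, while $\varphi_{*}(\mm_{\X}\restr E)\le\Comp(\varphi)\,\mm_{\Y}\ll\mm_{\Y}$ forces $|\d f|_{*}\circ\varphi=0$ $\mm_{\X}$-a.e.\ on $\varphi^{-1}(A)\cap E$.

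Next I would record the pointwise bound on simple forms. Using property (iv) of Definition~\ref{def_L2_normed_module} and the pairwise disjointness of the sets $\varphi^{-1}(A_{i})$, together with \eqref{f_auxiliary_for_pullback_2},
\[
|\varphi^{*}\omega|_{*}=\sum_{i}\nchi_{\varphi^{-1}(A_{i})}\,|{\sf T}_{\varphi}(f_{i})|_{*}\le\Lip(\varphi)\sum_{i}\nchi_{\varphi^{-1}(A_{i})}\,|\d f_{i}|_{*}\circ\varphi=\Lip(\varphi)\,|\omega|_{*}\circ\varphi
\]
$\mm_{\X}$-a.e.\ on $E$, i.e.\ \eqref{f_pullback_bdd_deform_map_2} for simple forms. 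Squaring, integrating over $E$ and changing variables through the bounded-deformation bound $\varphi_{*}(\mm_{\X}\restr E)\le\Comp(\varphi)\,\mm_{\Y}$ gives $\|\varphi^{*}\omega\|_{L^{2}(T^{*}\X)}\le\Lip(\varphi)\sqrt{\Comp(\varphi)}\,\|\omega\|_{L^{2}(T^{*}\Y)}$, so $\varphi^{*}$ is linear and continuous on the dense subspace of simple forms and admits a unique continuous linear extension $\varphi^{*}:L^{2}(T^{*}\Y)\to L^{2}(T^{*}\X)\restr E$; the bound \eqref{f_pullback_bdd_deform_map_2} survives this passage to the limit, since the pointwise norms are $L^{2}$-continuous and, up to subsequences, the change-of-variables inequality is preserved under $L^{2}$-convergence. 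The identity $\varphi^{*}(\d f)={\sf T}_{\varphi}(f)$ holds by construction for every $f\in W^{1,2}(\Y)$, in particular for $f\in W^{1,2}(\Y)\cap{\rm LIP}(\Y)$; the relation $\varphi^{*}(h\omega)=(h\circ\varphi)\,\varphi^{*}\omega$ — with $h\circ\varphi\in L^{\infty}(\mm_{\X}\restr E)$ well defined because $\varphi_{*}(\mm_{\X}\restr E)\ll\mm_{\Y}$ — holds by the very definition when $h$ is simple and $\omega$ a simple form, and extends to all $h\in L^{\infty}(\mm_{\Y})$ and $\omega\in L^{2}(T^{*}\Y)$ by approximating $h$ uniformly and $\omega$ in $L^{2}(T^{*}\Y)$ and invoking continuity of the multiplication and of $\varphi^{*}$. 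Uniqueness is then clear, as \eqref{f_pullback_bdd_deform_map_1} prescribes $\varphi^{*}$ on the dense class of simple forms.

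The main obstacle, such as it is, is bookkeeping: making sure the locality of ${\sf T}_{\varphi}$ really does render the defining formula representation-independent, and that the multiplicativity $\varphi^{*}(h\omega)=(h\circ\varphi)\,\varphi^{*}\omega$ survives the two successive approximation steps. The rest is a routine combination of \eqref{f_auxiliary_for_pullback_2}, the change-of-variables formula for $\varphi$, and the module axioms.
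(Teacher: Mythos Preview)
Your proposal is correct and follows essentially the same route as the paper: define $\varphi^{*}$ on the dense class of simple $1$-forms $\sum_i\nchi_{A_i}\d f_i$ via ${\sf T}_\varphi$, use the pointwise bound \eqref{f_auxiliary_for_pullback_2} to get well-posedness and the inequality \eqref{f_pullback_bdd_deform_map_2} on this class, then extend by continuity. The only cosmetic difference is that the paper deduces well-posedness directly from \eqref{f_pullback_bdd_deform_map_2} (if two representations give the same $\omega$, their difference has zero pointwise norm), whereas you phrase it as a locality property of ${\sf T}_\varphi$; these are the same observation.
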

\begin{proof} Start observing that the continuity of ${\sf T}_\varphi$, the density of $W^{1,2}\cap {\rm LIP}(\Y)$ in $W^{1,2}(\Y)$ (recall \eqref{eq:lipdense}), the required continuity of $\varphi^*$ and the first in \eqref{f_pullback_bdd_deform_map_1} force the choice
\begin{equation}
\label{eq:req2}
\varphi^{*}(\d f)={\mathsf{T}}_{\varphi}(f)\qquad\forall f\in W^{1,2}(\Y).
\end{equation}
Now define $V\subset L^2(T^*{\rm Y})$ as
$$V:=\left\{\sum_{i=1}^{n}\chi_{F_{i}}\cdot\mbox{d}f_{i}\;\bigg|
\;\begin{array}{ll}
F_1,\ldots,F_n\in\mathscr{B}(\mbox{Y})\mbox{ are a partition of }\mbox{Y}\\
\mbox{and }f_{1},\ldots,f_{n}\in{}W^{1,2}(\mbox{Y})\mbox{ for some }n\geq{}1
\end{array}
\right\}\mbox{,}$$
and notice that \eqref{eq:req2} and the second in \eqref{f_pullback_bdd_deform_map_1} force the definition
\[
\varphi^*\omega:=\sum_{i=1}^n\nchi_{F_i}\circ\varphi \, {\sf T}_\varphi(f_i)\qquad\text{for}\qquad \omega\in V,\quad \omega=\sum_{i=1}^n\nchi_{F_i}\d f_i
\]
From \eqref{f_auxiliary_for_pullback_2} it directly follows that the bound \eqref{f_pullback_bdd_deform_map_2} holds for such $\omega$'s, showing in particular that such definition of $\varphi^*\omega$ is well posed, in the sense that $\varphi^*\omega$ depends only on $\omega$ and not on the way we write $\omega$ in terms of $F_i$'s and $f_i$'s. It is also clear that the first in \eqref{f_pullback_bdd_deform_map_1} holds and that the second holds for simple functions $h$.

The inequality \eqref{f_pullback_bdd_deform_map_2} and the fact that $\varphi$ is of bounded compression show that $\varphi^*:V\to {L^{2}(T^{*}{\X})}\restr{E}$ is continuous w.r.t.\ the $L^2(T^*{\rm Y})$-norm, and being $V$ dense in $L^2(T^*{\rm Y})$, existence and uniqueness of the continuous extension follow. 

The fact that such extension satisfies \eqref{f_pullback_bdd_deform_map_1} and \eqref{f_pullback_bdd_deform_map_2} follows by simple continuity arguments.
\end{proof}
\begin{theorem}[Functoriality of the pullback]\label{thm_functoriality_pullback}
Let $(\emph{X},\mathsf{d}_{\emph{X}},\mathfrak{m}_{\emph{X}})$ be a doubling space and $(\emph{Y},\mathsf{d}_{\emph{Y}},\mathfrak{m}_{\emph{Y}})$,
$(\emph{Z},\mathsf{d}_{\emph{Z}},\mathfrak{m}_{\emph{Z}})$ be PI spaces. Also, let  $E\in\mathscr{B}(\emph{X})$ and $F\in\mathscr{B}(\emph{Y})$ and  $\varphi:\,E\to\emph{Y}$ and $\psi:\,F\to\emph{Z}$ be two maps of bounded deformation such that $\varphi(E)\subseteq F$. 

Then $\psi\circ\varphi:\,E\to\emph{Z}$ is of bounded deformation and 
\begin{equation}\label{f_functoriality_pullback_1}
(\psi\circ\varphi)^*=\varphi^*\circ \psi^*.
\end{equation}
\end{theorem}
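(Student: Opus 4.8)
The plan is to verify the claimed identity by testing both sides on a generating set of $L^2(T^*\Z)$ and invoking the uniqueness clauses built into the pullback constructions. First I would check that $\psi\circ\varphi:E\to\Z$ is indeed of bounded deformation: it is Lipschitz as a composition of Lipschitz maps with $\Lip(\psi\circ\varphi)\le\Lip(\psi)\Lip(\varphi)$, and since $\varphi(E)\subseteq F$ we have $(\psi\circ\varphi)_*(\mm_\X\restr E)=\psi_*\big(\varphi_*(\mm_\X\restr E)\big)\le\psi_*\big(\Comp(\varphi)\,\mm_\Y\restr F\big)\le \Comp(\varphi)\Comp(\psi)\,\mm_\Z$, so $\Comp(\psi\circ\varphi)\le\Comp(\varphi)\Comp(\psi)$. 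Hence both $(\psi\circ\varphi)^*$ and $\varphi^*\circ\psi^*$ are well-defined linear continuous maps $L^2(T^*\Z)\to L^2(T^*\X)\restr E$.

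Next, both sides being linear and continuous (the composition $\varphi^*\circ\psi^*$ is continuous as a composition of continuous module morphisms), and the set $V\subset L^2(T^*\Z)$ of finite sums $\sum_i\nchi_{F_i}\,\d f_i$ with $f_i\in W^{1,2}(\Z)$ being dense, it suffices to prove the identity on elements of the form $\nchi_{H}\,\d f$ with $H\in\mathscr B(\Z)$ Borel and $f\in W^{1,2}(\Z)\cap\mathrm{LIP}(\Z)$ — and in fact, using the module property $(\psi\circ\varphi)^*(h\omega)=(h\circ\psi\circ\varphi)(\psi\circ\varphi)^*\omega$ together with $\varphi^*(\psi^*(h\omega))=\varphi^*\big((h\circ\psi)\psi^*\omega\big)=(h\circ\psi\circ\varphi)\,\varphi^*\psi^*\omega$, it reduces to the single case $\omega=\d f$, $f\in W^{1,2}(\Z)\cap\mathrm{LIP}(\Z)$. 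So the crux is
\[
(\psi\circ\varphi)^*(\d f)=\varphi^*\big(\psi^*(\d f)\big)=\varphi^*\big({\sf T}_\psi(f)\big),
\]
and since $(\psi\circ\varphi)^*(\d f)={\sf T}_{\psi\circ\varphi}(f)$, I must show ${\sf T}_{\psi\circ\varphi}(f)=\varphi^*\big({\sf T}_\psi(f)\big)$.

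To identify ${\sf T}_{\psi\circ\varphi}(f)$ I use its defining property \eqref{f_auxiliary_for_pullback_1}: on a bounded Borel $G\subseteq E$, if $g\in W^{1,2}(\X)\cap\mathrm{LIP}(\X)$ satisfies $g\restr G=f\circ\psi\circ\varphi\restr G$, then $\nchi_G{\sf T}_{\psi\circ\varphi}(f)=\nchi_G\,\d g$. Since $f$ is Lipschitz, $f\circ\psi\in\mathrm{LIP}(F)$, and by the characterization of ${\sf T}_\psi$ one knows $\nchi_{G'}{\sf T}_\psi(f)=\nchi_{G'}\,\d h$ for bounded Borel $G'\subseteq F$ and $h\in W^{1,2}(\Y)\cap\mathrm{LIP}(\Y)$ with $h\restr{G'}=f\circ\psi\restr{G'}$. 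Combining: partition $E$ into bounded Borel pieces, on each piece choose $h$ realizing ${\sf T}_\psi(f)$ on a neighbourhood of $\varphi$ of that piece (intersected with $F$), then choose $g$ with $g=h\circ\varphi$ on that piece; by the chain-rule structure $g=f\circ\psi\circ\varphi$ there, so $\nchi_{G}\d g=\nchi_G{\sf T}_{\psi\circ\varphi}(f)$ on one hand, while by definition of $\varphi^*$ one has $\varphi^*(\nchi_{G'}\d h)=\nchi_{\varphi^{-1}(G')}{\sf T}_\varphi(h)$ and ${\sf T}_\varphi(h)=\d g$ near the piece. Matching these patchwise gives the claim.

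The main obstacle is the bookkeeping in this last step: one must carefully arrange the Borel partitions of $E$ and of $F$, and the local Lipschitz extensions $g$, $h$, so that the identities $h\restr{G'}=f\circ\psi\restr{G'}$ and $g\restr G=h\circ\varphi\restr G$ (hence $g\restr G=f\circ\psi\circ\varphi\restr G$) hold simultaneously on compatible sets, using that $\varphi(E)\subseteq F$ to ensure $\varphi^{-1}(G')\cap E$ covers the relevant piece. Once the defining property \eqref{f_auxiliary_for_pullback_1} is applied consistently on such a compatible family of bounded Borel sets covering $\mm_\X$-a.e. $E$, locality of the differential closes the argument; the extension from $V$ to all of $L^2(T^*\Z)$ is then routine continuity, and the module-morphism identity extends from simple $h$ to general $h\in L^\infty(\mm_\Z)$ by approximation exactly as in the proof of Theorem \ref{thm_pullback_bdd_deform_map}.
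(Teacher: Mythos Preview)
Your proposal is correct and follows essentially the same approach as the paper: reduce to differentials of Lipschitz Sobolev functions via the module property and density, then verify $\varphi^*\big(\psi^*(\d f)\big)={\sf T}_{\psi\circ\varphi}(f)$ using the defining localization \eqref{f_auxiliary_for_pullback_1}. The only difference is presentational: the ``bookkeeping'' you flag as the main obstacle is dispatched in the paper without any partitioning --- one simply fixes an \emph{arbitrary} bounded $G\subseteq E$ and $g\in W^{1,2}\cap\mathrm{LIP}(\X)$ with $g\restr G=f\circ\psi\circ\varphi\restr G$, then chooses a bounded $G'\subseteq F$ with $\varphi(G)\subseteq G'$ and $g'\in W^{1,2}\cap\mathrm{LIP}(\Y)$ with $g'\restr{G'}=f\circ\psi\restr{G'}$; since automatically $g=g'\circ\varphi$ on $G$, the chain $\nchi_G\varphi^*(\psi^*\d f)=\nchi_G\varphi^*(\nchi_{G'}\d g')=\nchi_G\varphi^*(\d g')=\nchi_G\d g$ gives the result directly, and the arbitrariness of $G,g$ matches the characterizing property of ${\sf T}_{\psi\circ\varphi}(f)$.
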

\begin{proof} The fact that $\psi\circ\varphi$ is of bounded deformation is trivial, so that thanks to the characterization of pullback given by Theorem \ref{thm_pullback_bdd_deform_map} and the fact that $\varphi^*\circ \psi^*:L^2(T^*{\rm Z})\to L^2(T^*{\rm X})\restr E$ is linear and continuous,  to conclude it is sufficient to show that $\varphi^*\circ\psi^*$ satisfies the following properties:
\begin{equation}\label{f_functoriality_pullback_2}
\begin{array}{ll}
\varphi^*\big(\psi^*(\mbox{d}f)\big)=\mathsf{T}_{\psi\circ\varphi}(f)\\
\varphi^*\big(\psi^*(h\omega)\big)=(h\circ\psi\circ\varphi)\,\varphi^*(\psi^*\omega)
\end{array}\quad\begin{array}{ll}
\mbox{ for every }f\in W^{1,2}(\mbox{Z})\cap{\rm LIP}({\rm Z})\mbox{,}\\
\mbox{ for every }\omega\in L^2(T^*\mbox{Z})\mbox{ and }h\in L^\infty(\mathfrak{m}_{\mbox{\scriptsize{Z}}})\mbox{.}
\end{array}
\end{equation}
To prove the first equality in \eqref{f_functoriality_pullback_2}, fix $G\in\mathscr{B}(E)$ bounded and $g\in W^{1,2}(\mbox{X})\cap\mbox{LIP}(\mbox{X})$ such that ${g}\restr G={f\circ\psi\circ\varphi}\restr G$. It is enough to show that
\begin{equation}\label{f_functoriality_pullback_3}
\nchi_G\,\varphi^*\big(\psi^*(\mbox{d}f)\big)=\nchi_G\,\mbox{d}g\mbox{.}
\end{equation}
Choose $G'\in\mathscr{B}(F)$ bounded with $\varphi(G)\subseteq G'$ and $g'\in W^{1,2}(\mbox{Y})\cap\mbox{LIP}(\mbox{Y})$ with ${g'}\restr{G'}={f\circ\psi}\restr {G'}$. Thus $\nchi_{G'}\,\psi^*(\mbox{d}f)=\nchi_{G'}\, \mbox{d}g'$ and, since $g=g'\circ\varphi$ on $G$, also $\nchi_G\, \varphi^*(\mbox{d}g')=\nchi_G\,\mbox{d}g$. Hence
$$\nchi_G\, \varphi^*\big(\psi^*(\mbox{d}f)\big)=\nchi_G\, \varphi^*\big(\nchi_{G'}\, \psi^*(\mbox{d}f)\big)
=\nchi_G\, \varphi^*\big(\nchi_{G'}\,\mbox{d}g'\big)=\nchi_G\,\varphi^*(\mbox{d}g')=\nchi_G\,\mbox{d}g\mbox{,}$$
proving \eqref{f_functoriality_pullback_3} and accordingly the first equality in \eqref{f_functoriality_pullback_2}.
To conclude, notice that for $\omega\in L^2(T^*{\rm Z})$ and $h\in L^\infty(\mm_{\rm Z})$ we have
$$\varphi^*\big(\psi^*(h\omega)\big)=\varphi^*\big((h\circ\psi)\,\psi^*\omega\big)=(h\circ\psi\circ\varphi)\,\varphi^*(\psi^*\omega)\mbox{,}$$
hence also the second equality in \eqref{f_functoriality_pullback_2} is satisfied.
\end{proof}
\subsection{Differential}
When $\varphi$ is invertible and also its inverse map is of bounded deformation, it is possible to define the \textsl{differential} of $\varphi$   by  duality with the pullback: 
\begin{proposition}[Differential of a map of bounded deformation]\label{prop_differential_bdd_deform_map}

Let $(\emph{X},\mathsf{d}_{\emph{X}},\mathfrak{m}_{\emph{X}})$ be a doubling space and $(\emph{Y},\mathsf{d}_{\emph{Y}},\mathfrak{m}_{\emph{Y}})$ be a PI  space. Also, let  $E\in\mathscr{B}({\X})$, $F\in \mathscr B(\Y)$ and let $\varphi:\,E\to F$ be an invertible function such that both $\varphi$ and $\varphi^{-1}$ are of bounded deformation.

Then there exists a unique map $\emph{d}\varphi:\,{L^2(T\emph{X})}\restr E\to{L^2(T\emph{Y})}\restr F$ such that for any $\mathsf{v}\in{L^2(T\emph{X})}\restr E$, one has
\begin{equation}\label{f_differential_bdd_deform_map_1}
\omega\big(\emph{d}\varphi(\mathsf{v})\big)=(\varphi^*\omega)(\mathsf{v})\circ\varphi^{-1}\;\;\mathfrak{m}_{\emph{Y}}\mbox{-a.e. in }F\mbox{,}\quad\mbox{ for every }\omega\in{L^2(T^*\emph{Y})}\restr F\mbox{.}
\end{equation}
The operator $\emph{d}\varphi$, called \emph{differential} of $\varphi$, is linear continuous and satisfies
\begin{equation}\label{f_differential_bdd_deform_map_2}
\big|\emph{d}\varphi(\mathsf{v})\big|\leq\emph{Lip}(\varphi)\,|\mathsf{v}|\circ\varphi^{-1}\;\;\mathfrak{m}_{\emph{Y}}\mbox{-a.e. in }F\mbox{,}\quad\mbox{ for every }\mathsf{v}\in{L^2(T\emph{X})}\restr E\mbox{.}
\end{equation}
Moreover, if $E'\subset E$ is Borel and $F':=\varphi(E')$, letting $\d\varphi\restr{E'}$ be the differential of $\varphi$ seen as a map from $E'$ to $F'$, we have that
\begin{equation}
\label{eq:locdiff}
\d({\varphi}\restr {E'})(\nchi_{E'}\mathsf{v})=\nchi_{F'}{\d}\varphi(\mathsf{v})\quad\mbox{ for every }\mathsf{v}\in {L^2(T\mbox{\X})}\restr E\mbox{.}
\end{equation}
\end{proposition}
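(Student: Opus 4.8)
The plan is to obtain $\d\varphi$ by ``transposing'' the pullback $\varphi^*$ and then transporting the resulting $L^1$-function from $E$ to $F$ through $\varphi^{-1}$. Concretely, given $\mathsf v\in L^2(T\X)\restr E$, I would define $\d\varphi(\mathsf v)$ as the map sending $\omega\in L^2(T^*\Y)$ to the function on $\Y$ which equals $\big[(\varphi^*\omega)(\mathsf v)\big]\circ\varphi^{-1}$ on $F$ and $0$ on $F^c$. Every composition with $\varphi^{-1}$ appearing here makes sense at the level of measure classes precisely because $\varphi^{-1}$ is of bounded deformation: $(\varphi^{-1})_*(\mm_\Y\restr F)\le\Comp(\varphi^{-1})\,\mm_\X$, whence $\|h\circ\varphi^{-1}\|_{L^1(\mm_\Y)}\le\Comp(\varphi^{-1})\,\|h\|_{L^1(\mm_\X)}$ and, likewise, $\|h\circ\varphi^{-1}\|_{L^2(\mm_\Y)}\le\sqrt{\Comp(\varphi^{-1})}\,\|h\|_{L^2(\mm_\X)}$; we will also use the a.e.\ identities $h\circ\varphi\circ\varphi^{-1}=h$ on $F$, $\nchi_{F^c}\circ\varphi=0$ on $E$, and $(\nchi_{E'}h)\circ\varphi^{-1}=\nchi_{\varphi(E')}(h\circ\varphi^{-1})$.

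With this definition, $\omega\mapsto\d\varphi(\mathsf v)(\omega)$ is linear, it is $L^\infty(\mm_\Y)$-homogeneous thanks to $\varphi^*(h\omega)=(h\circ\varphi)\,\varphi^*\omega$ from \eqref{f_pullback_bdd_deform_map_1} together with $(h\circ\varphi)\circ\varphi^{-1}=h$ on $F$, and it is continuous because \eqref{f_pullback_bdd_deform_map_2} gives $|(\varphi^*\omega)(\mathsf v)|\le\Lip(\varphi)\,(|\omega|_*\circ\varphi)\,|\mathsf v|$, so that after composing with $\varphi^{-1}$
$$\big|\d\varphi(\mathsf v)(\omega)\big|\le\Lip(\varphi)\,|\omega|_*\,\big(|\mathsf v|\circ\varphi^{-1}\big)\qquad\mm_\Y\text{-a.e.\ on }F,$$
hence $\|\d\varphi(\mathsf v)(\omega)\|_{L^1(\mm_\Y)}\le\Lip(\varphi)\sqrt{\Comp(\varphi^{-1})}\,\|\omega\|_{L^2(T^*\Y)}\,\|\mathsf v\|$. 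Thus $\d\varphi(\mathsf v)$ is a linear continuous $L^\infty(\mm_\Y)$-homogeneous map $L^2(T^*\Y)\to L^1(\mm_\Y)$, i.e.\ an element of $L^2(T\Y)=\big(L^2(T^*\Y)\big)^*$, and since $\d\varphi(\mathsf v)(\omega)$ is supported on $F$ for every $\omega$ we get $\nchi_{F^c}\,\d\varphi(\mathsf v)=0$, so $\d\varphi(\mathsf v)\in L^2(T\Y)\restr F$; the same estimate shows that $\mathsf v\mapsto\d\varphi(\mathsf v)$ is linear and continuous. Property \eqref{f_differential_bdd_deform_map_1} then holds by construction (here $\omega(\d\varphi(\mathsf v))$ denotes the canonical pairing, which by definition equals $\d\varphi(\mathsf v)(\omega)$), and taking the essential supremum over $\{|\omega|_*\le1\}$ in the displayed inequality yields \eqref{f_differential_bdd_deform_map_2}.

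For uniqueness it suffices to observe that the only $w\in L^2(T\Y)\restr F$ with $\omega(w)=0$ for all $\omega\in L^2(T^*\Y)\restr F$ is $w=0$: writing $w=\nchi_F w$ and using \eqref{f_dual_multiplication}, for arbitrary $\zeta\in L^2(T^*\Y)$ one has $w(\zeta)=\nchi_F\,w(\zeta)$, hence $w(\nchi_{F^c}\zeta)=\nchi_{F^c}\,w(\zeta)=0$ and therefore $w(\zeta)=w(\nchi_F\zeta)=0$, because $\nchi_F\zeta\in L^2(T^*\Y)\restr F$. Applying this to the difference of two maps satisfying \eqref{f_differential_bdd_deform_map_1} (evaluated at a fixed $\mathsf v$) gives the asserted uniqueness.

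It remains to prove \eqref{eq:locdiff}, for which the key preliminary step is the corresponding localization of the pullback, namely $(\varphi\restr{E'})^*\omega=\nchi_{E'}\,\varphi^*\omega$ for every $\omega\in L^2(T^*\Y)$. This follows from the characterization \eqref{f_auxiliary_for_pullback_1}: for every bounded Borel $G\subseteq E'$ both $\mathsf T_{\varphi\restr{E'}}(f)$ and $\mathsf T_\varphi(f)$ have restriction $\nchi_G\,\d g$ to $G$, whence $\mathsf T_{\varphi\restr{E'}}(f)=\nchi_{E'}\,\mathsf T_\varphi(f)$ for $f\in W^{1,2}(\Y)\cap{\rm LIP}(\Y)$, and one propagates this to all $\omega$ exactly as in the proof of Theorem \ref{thm_pullback_bdd_deform_map}. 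Given this, plug $\mathsf w:=\nchi_{E'}\mathsf v$ into \eqref{f_differential_bdd_deform_map_1} written for $\varphi\restr{E'}:E'\to F'$ and simplify using $(\nchi_{E'}\varphi^*\omega)(\nchi_{E'}\mathsf v)=\nchi_{E'}\,(\varphi^*\omega)(\mathsf v)$ and $(\nchi_{E'}h)\circ\varphi^{-1}=\nchi_{F'}(h\circ\varphi^{-1})$; comparing with \eqref{f_differential_bdd_deform_map_1} for $\varphi$ shows that $\d(\varphi\restr{E'})(\nchi_{E'}\mathsf v)$ and $\nchi_{F'}\,\d\varphi(\mathsf v)$ — both in $L^2(T\Y)\restr{F'}$ — pair identically against every $\omega\in L^2(T^*\Y)\restr{F'}$, so they coincide by the uniqueness criterion above applied with $F'$ in place of $F$. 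The only real difficulty throughout is bookkeeping: one must constantly check that compositions with $\varphi$ and $\varphi^{-1}$ preserve $\mm$-negligibility (which is exactly what the two-sided bounded-deformation hypothesis provides) and keep track of the supports of the various $L^1$ and $L^2$ functions involved.
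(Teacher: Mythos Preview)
Your proof is correct and follows essentially the same route as the paper: define $\d\varphi(\mathsf v)$ as the functional $\omega\mapsto\nchi_F\big[(\varphi^*\omega)(\mathsf v)\big]\circ\varphi^{-1}$, use \eqref{f_pullback_bdd_deform_map_2} to get the pointwise bound and continuity, and verify $L^\infty(\mm_\Y)$-homogeneity via \eqref{f_pullback_bdd_deform_map_1}. You add explicit detail the paper omits---a clean uniqueness argument and the preliminary localization $(\varphi\restr{E'})^*\omega=\nchi_{E'}\,\varphi^*\omega$ for \eqref{eq:locdiff}---whereas the paper dispatches \eqref{eq:locdiff} in one line as ``a direct consequence of the characterizing property \eqref{f_differential_bdd_deform_map_1}''.
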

\begin{proof} For ${\sf v}\in L^2(T{\rm X})\restr E$ and $\omega\in L^2(T^*\Y)$ we have
\begin{equation}
\label{eq:aereo1}
|\varphi^*\omega({\sf v})\circ\varphi^{-1}|\leq |\varphi^*\omega|_*\circ\varphi^{-1}|{\sf v}|\circ\varphi^{-1}\stackrel{\eqref{f_pullback_bdd_deform_map_2}}\leq \Lip(\varphi)|\omega|_*|{\sf v}|\circ\varphi^{-1}\qquad\mm_{\rm Y}-a.e.\ on\ F.
\end{equation}
Taking into account the fact that  $\varphi^{-1}$ is of bounded compression we see that $|{\sf v}|\circ\varphi^{-1}\in L^2(F)$ hence the above inequality grants that the function $L_{\sf v}(\omega):= \nchi_F\varphi^*\omega({\sf v})\circ\varphi^{-1}$, intended to be 0 outside $F$, belongs to $L^1(\Y)$. The same bound also grants that the linear map $L^2(T^*{\rm Y})\ni \omega\mapsto L_{\sf v}(\omega)\in L^1(\Y)$ is continuous and since for  $h\in L^\infty(\mm_{\rm Y})$ we  have
\[
L_{\sf v}(h\omega)=\nchi_F\varphi^*(h\omega)({\sf v})\circ\varphi^{-1}=\nchi_F\big(h\circ\varphi\,\varphi^*\omega({\sf v})\big)\circ\varphi^{-1}=h\nchi_F\big(\varphi^*\omega({\sf v})\big)\circ\varphi^{-1}=hL_{\sf v}(\omega),
\]
we see, by the very definition of tangent module, that $L_{\sf v}$ is an element of $L^2(T\Y)$ which from now on we shall denote by $\d\varphi({\sf v})$. 

The fact that  the map ${\sf v}\mapsto\d\varphi({\sf v})$ is linear is clear from the definition and the bound \eqref{f_differential_bdd_deform_map_2} is a restatement of \eqref{eq:aereo1}. The inequality \eqref{f_differential_bdd_deform_map_2} also grants the continuity of $L^2(T\X)\restr E\ni {\sf v}\mapsto \d\varphi({\sf v})\in L^2(T\Y)$, indeed:
\[
\|\d\varphi({\sf v})\|_{L^2(T\Y)}\stackrel{\eqref{f_differential_bdd_deform_map_2}}{\leq} \Lip(\varphi)\||{\sf v}\circ\varphi^{-1}|\|_{L^2(F)}\leq\Lip(\varphi)\sqrt{\Comp(\varphi^{-1})}\|{\sf v}\|_{L^2(T\X)\restr E}.
\]
Finally, the last claim \eqref{eq:locdiff}  is a direct consequence of the characterizing property \eqref{f_differential_bdd_deform_map_1}.
\end{proof}
A direct consequence of Theorem \ref{thm_functoriality_pullback} is the chain rule for the  differential:
\begin{corollary}[Chain rule for the differential]\label{cor_functoriality_differential} Let  $(\emph{X},\mathsf{d}_{\emph{X}},\mathfrak{m}_{\emph{X}})$ be a doubling space and  $(\emph{Y},\mathsf{d}_{\emph{Y}},\mathfrak{m}_{\emph{Y}})$, $(\emph{Z},\mathsf{d}_{\emph{Z}},\mathfrak{m}_{\emph{Z}})$ be PI spaces.
Let $E\in\mathscr{B}(\emph{X})$, $F\in\mathscr{B}(\emph{Y})$ and $G\in\mathscr{B}(\emph{Z})$, and $\varphi:\,E\to F$ and $\psi:\,F\to G$ be both invertible, of bounded deformation  and with inverses  also of bounded deformation. Then
\begin{equation}\label{f_functoriality_differential}
\emph{d}(\psi\circ\varphi)(\mathsf{v})=\emph{d}\psi\big(\emph{d}\varphi(\mathsf{v})\big)\quad\mbox{ for every }\mathsf{v}\in
{L^2(T\emph{X})}\restr E\mbox{.}
\end{equation}
\end{corollary}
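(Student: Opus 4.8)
The plan is to deduce the chain rule for differentials from the functoriality of the pullback (Theorem \ref{thm_functoriality_pullback}) by a pure duality argument, exploiting the characterizing property \eqref{f_differential_bdd_deform_map_1}. First I would observe that $\psi\circ\varphi:E\to G$ is invertible with inverse $\varphi^{-1}\circ\psi^{-1}$, and that the composition of maps of bounded deformation is of bounded deformation (this was already noted in Theorem \ref{thm_functoriality_pullback}); the same applies to the inverses, so $\d(\psi\circ\varphi)$ is well-defined by Proposition \ref{prop_differential_bdd_deform_map}. Hence both sides of \eqref{f_functoriality_differential} are elements of $L^2(T\Z)\restr G$, and it suffices to check they induce the same functional, i.e.\ that they pair identically with every $\omega\in L^2(T^*\Z)\restr G$.

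Next, fix $\mathsf{v}\in L^2(T\X)\restr E$ and $\omega\in L^2(T^*\Z)\restr G$. Applying \eqref{f_differential_bdd_deform_map_1} to the map $\psi\circ\varphi$ (whose inverse is $\varphi^{-1}\circ\psi^{-1}$) gives
\[
\omega\big(\d(\psi\circ\varphi)(\mathsf{v})\big)=\big((\psi\circ\varphi)^*\omega\big)(\mathsf{v})\circ(\psi\circ\varphi)^{-1}=\big(\varphi^*(\psi^*\omega)(\mathsf{v})\big)\circ\varphi^{-1}\circ\psi^{-1}\quad\mm_\Z\text{-a.e.\ in }G,
\]
where the second equality uses Theorem \ref{thm_functoriality_pullback}. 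On the other hand, applying \eqref{f_differential_bdd_deform_map_1} first to $\psi$ (with the form $\omega\in L^2(T^*\Z)\restr G$ and the vector field $\d\varphi(\mathsf{v})\in L^2(T\Y)\restr F$), and then to $\varphi$ (with the form $\psi^*\omega\in L^2(T^*\Y)\restr F$ — note $\psi^*\omega\in L^2(T^*\Y)\restr F$ because the pullback lands in the restricted module), yields
\[
\omega\big(\d\psi(\d\varphi(\mathsf{v}))\big)=\big(\psi^*\omega\big)\big(\d\varphi(\mathsf{v})\big)\circ\psi^{-1}=\Big(\big(\varphi^*(\psi^*\omega)\big)(\mathsf{v})\circ\varphi^{-1}\Big)\circ\psi^{-1}\quad\mm_\Z\text{-a.e.\ in }G.
\]
The two right-hand sides coincide $\mm_\Z$-a.e.\ in $G$, so the two functionals agree; since $L^2(T^*\Z)\restr G$ separates points of $L^2(T\Z)\restr G$, we conclude \eqref{f_functoriality_differential}.

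The only genuinely delicate point is keeping careful track of the domains of definition and the composition with the various inverses: one must verify that $\psi^*\omega$ indeed belongs to $L^2(T^*\Y)\restr F$ so that the differential of $\varphi$ (defined as a map from $E$ to $F$) can be paired with it, and that the defining identity \eqref{f_differential_bdd_deform_map_1} for $\psi$ is applied with a vector field supported in $F$, namely $\d\varphi(\mathsf{v})$, which is guaranteed since $\d\varphi$ takes values in $L^2(T\Y)\restr F$. A minor subtlety is that \eqref{f_differential_bdd_deform_map_1} is an $\mm$-a.e.\ identity, so the chain of equalities above is understood a.e.; composing an a.e.\ identity on $F$ with the bounded-deformation map $\psi^{-1}$ preserves the a.e.\ relation on $G$ because $\psi^{-1}_*(\mm_\Z\restr G)\ll \mm_\Y$. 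With these observations in place the argument is routine, and functoriality of the pullback does all the real work.
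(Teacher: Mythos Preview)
Your proof is correct and follows exactly the same duality argument as the paper: both compute $\omega\big(\d(\psi\circ\varphi)(\mathsf{v})\big)$ and $\omega\big(\d\psi(\d\varphi(\mathsf{v}))\big)$ via \eqref{f_differential_bdd_deform_map_1} and match them using the functoriality of the pullback \eqref{f_functoriality_pullback_1}. The paper's version is terser, but your additional remarks on well-definedness, on $\psi^*\omega$ landing in $L^2(T^*\Y)\restr F$, and on why composing an a.e.\ identity with $\psi^{-1}$ preserves it are all correct and helpful clarifications.
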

\begin{proof} Just notice that for   $\mathsf{v}\in{L^2(T\mbox{X})}\restr E$ and $\omega\in{L^2(T^*\mbox{Z})}\restr G$ we have
\[\begin{split}
\omega\big(\mbox{d}(\psi\circ\varphi)(\mathsf{v})\big)&=\big((\psi\circ\varphi)^*\omega\big)(\mathsf{v})\circ{(\psi\circ\varphi)}^{-1}\stackrel{\eqref{f_functoriality_pullback_1}}=\big(\varphi^*(\psi^*\omega)\big)(\mathsf{v})\circ\varphi^{-1}\circ\psi^{-1}\\
&=(\psi^*\omega)\big(\mbox{d}\varphi(\mathsf{v})\big)\circ\psi^{-1}=\omega\big(\mbox{d}\psi\big(\mbox{d}\varphi(\mathsf{v})\big)\big)
\end{split}\]
 $\mathfrak{m}_{\mbox{\scriptsize{Z}}}$-a.e. in $G$. By Theorem \ref{prop_differential_bdd_deform_map} this is sufficient to conclude.
\end{proof}
Let us now discuss the case $\X=\Y=\R^k$. Let $E,F\subset \R^k$ Borel and $\varphi:E\to F$ of bounded deformation, invertible and with inverse of bounded deformation. Then in particular it is Lipschitz and we know from Rademacher theorem that for $\mathcal L^k$-a.e.\ $x\in E$ its differential, intended in the classical sense, exists. Such differential, which for the moment we denote by $\hat \d\varphi(x)$, is a linear map from $T_x\R^k\sim\R^k$ to $T_{\varphi(x)}\R^k\sim\R^k$. Thus we can see   $\hat\d\varphi$ as a map from $L^2(E,\R^k)$ to $L^2(F,\R^k)$. Recalling that we identified $L^2(E,\R^k)$ with the restriction $L^2(T\R^k)\restr E$ of the tangent module $L^2(T\R^k)$ to $E$ (and similarly for $F$), from the characterization of the `abstract' differential $\d\varphi$ provided by Proposition \ref{prop_differential_bdd_deform_map} it is readily verified that $\d\varphi$ and $\hat\d\varphi$ can be canonically identified, in the sense that for ${\sf v}\in L^2(T\R^k)\restr E\sim L^2(E,\R^k)$ it holds
\begin{equation}\label{f_differentials_in_R^k}
\big({\rm d}\varphi(\mathsf{v})\big)\big(\varphi(x)\big)=\hat{\rm{d}}\varphi(x)\big(\mathsf{v}(x)\big)\quad\mbox{ for }\mathcal{L}^k\mbox{-a.e. }x\in E.
\end{equation}
Due to this fact, from now on we won't distinguish anymore between the objects $\hat\d\varphi$ and $\d\varphi$.
\section{Strongly $\mm$-Rectifiable Spaces}
We introduce a new class of metric measure spaces, called strongly $\mathfrak{m}$-rectifiable spaces. Roughly speaking, these spaces can be partitioned (up to negligible sets) into countably many Borel sets, which are biLipschitz equivalent to suitable subsets of the Euclidean space, by means of maps that also keep under control the measure.

For the sake of simplicity, it is convenient to use the following notation: given a measured space $(S,\mathcal{M},\mu)$, we say that $(E_i)_{i\in\mathbb{N}}\subseteq\mathcal{M}$ is a \textsl{$\mu$-partition} of $E\in\mathcal{M}$ provided it is a partition of some $F\in\mathcal{M}$ such that $F\subseteq E$ and $\mu(E\setminus F)=0$. Moreover, given two $\mu$-partitions $(E_i)_i$ and $(F_j)_j$ of $E$, we say that $(F_j)_j$ is a \textsl{refinement} of $(E_i)_i$ if for every $j\in\mathbb{N}$ with $F_j\neq\emptyset$ there exists (a unique) $i\in\mathbb{N}$ such that $F_j\subseteq E_i$.
\begin{definition}[Strongly $\mathfrak{m}$-rectifiable space]\label{def_strongly_rectifiable_space}
A metric measure space $(\emph{X},\mathsf{d},\mathfrak{m})$ is said to be $\mathfrak{m}$\emph{-rectifiable}   provided it is a disjoint union $\emph{X}=\cup_{k\in\N}A_k$ of suitable $(A_k)\subset \mathscr{B}(\emph{X})$, such that the following condition is satisfied:
given $k\in\N$, there exists an $\mathfrak{m}$-partition $(U_i)_{i\in\mathbb{N}}\subseteq\mathscr{B}(\emph{X})$ of $A_k$ and a sequence $(\varphi_i)_{i\in\mathbb{N}}$ of maps $\varphi_i:\,U_i\to\mathbb{R}^k$ such that
\begin{equation}\label{f_strongly_rectifiable_space}
\varphi_i:\,U_i\to\varphi_i(U_i)\mbox{ is biLipschitz and }
(\varphi_i)_*(\mathfrak{m}\restr{U_i})\ll\mathcal{L}^k\mbox{ for every }i\in\mathbb{N}\mbox{.}
\end{equation}
The partition $\X=\cup_{k\in\N}A_k$ - which is clearly unique up to modification of negligible sets - is called \emph{dimensional decomposition} of $\X$.

$(\emph{X},\mathsf{d},\mathfrak{m})$ is said to be \emph{strongly} $\mathfrak{m}$\emph{-rectifiable} provided  for every $\eps>0$ the $(U_i,\varphi_i)$   can be chosen so that the $\varphi_i$ are  $(1+\varepsilon)$-biLipschitz.
\end{definition}
When working on $\mathfrak{m}$-rectifiable spaces, it is natural to adopt the following terminology, which is inspired by the language of differential geometry:
\begin{definition}[Charts and atlases]\label{def_charts_and_atlases}
Let $(\emph{X},\mathsf{d},\mathfrak{m})$ be an $\mathfrak{m}$-rectifiable metric measure space. A \emph{chart} on $X$ is a couple $(U,\varphi)$ 
where $U\in\mathscr{B}(A_k)$ for some $k\in\N $ and $\varphi:\,U\to\mathbb{R}^k$ satisfies
\begin{equation}\label{f_chart}
\begin{split}
&\varphi:\,U\to\varphi(U)\quad\mbox{ is biLipschitz,}\\
&C^{-1}\,\mathcal{L}^k\restr{\varphi(U)}
\leq\varphi_*(\mathfrak{m}\llcorner U)
\leq C\,\mathcal{L}^k\restr{\varphi(U)}\mbox{,}\end{split}
\end{equation}
for a suitable constant $C\geq 1$. An \emph{atlas} on $(\emph{X},\mathsf{d},\mathfrak{m})$ is a family $\mathscr{A}=\bigcup_{k\in\N}\big\{(U^k_i,\varphi^k_i)\big\}_{i\in\mathbb{N}}$ of charts on $(\emph{X},\mathsf{d},\mathfrak{m})$ such that $(U^k_i)_{i\in\mathbb{N}}$ is an $\mathfrak{m}$-partition of $A_k$ for every $k\in\N$.

The chart $(U,\varphi)$ is said to be an $\eps${\rm-chart} provided $\varphi:U\to\varphi(U)$ is $(1+\eps)$-biLipschitz and an atlas is said to be an $\eps${\rm-atlas} provided all of its charts are $\eps$-charts.
\end{definition}
We collect few simple facts about atlases which we shall frequently use in what follows:
\begin{itemize}
\item[i)] A $\mm$-rectifiable space admits an atlas and a strongly $\mm$-rectifiable space admits an $\eps$-atlas for every $\eps>0$. Indeed, for $(U_i,\varphi_i)$ as in \eqref{f_strongly_rectifiable_space} we can consider the density $\rho_i$ of $\varphi_*\mm\restr{U_i}$ w.r.t.\ the Lebesgue measure and the sets $U_{i,j}:=\varphi_i^{-1}(\{\rho_i\in[2^j,2^{j+1})\})$, $j\in\mathbb Z$. It is clear that $(U_{i,j},\varphi_i\restr{U_{i,j}})$ is a chart for every $j$ and that the $U_{i,j}$'s provide a $\mm$-partition of $U_i$, so that repeating the construction for every $i$ yields the desired atlas.
\item[ii)] Let  $(U_i,\varphi_i)_{i\in\N}$ be an atlas and, for every $i$, $(U_{i,j})_{j\in\N}$ a $\mm$-partition of $U_i$. Then $(U_{i,j},\varphi_i\restr{U_{i,j}})_{i,j\in\N}$ is also an atlas. In particular, by inner regularity of $\mm$, every $\mm$-rectifiable space admits an atlas whose charts are defined on compact sets.
\end{itemize}%
A first property of $\mathfrak{m}$-rectifiable spaces, whose proof is based upon the notion of differential
introduced in Proposition \ref{prop_differential_bdd_deform_map}, is the following:
\begin{theorem}[Dimensional decomposition of the tangent module]\label{thm_dim_decompos_of_L^2(TX)}
Let $(\rm{X},\mathsf{d},\mathfrak{m})$ be a PI space which is also $\mm$-rectifiable  and let  $(A_k)$ be its dimensional decomposition.

Then for every $k\in\N$ such that $\mm(A_k)>0$ we have that $L^2(T\rm{X})$ has dimension $k$ on $A_k$.
\end{theorem}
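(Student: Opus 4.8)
The plan is to work locally on each chart $(U,\varphi)$ with $U\subseteq A_k$ and transfer the problem to a subset of $\R^k$ via the differential $\d\varphi$ constructed in Proposition \ref{prop_differential_bdd_deform_map}, where we already know the tangent module is the concrete space $L^2(E,\R^k)$ of dimension $k$.

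First I would fix an atlas $(U_i,\varphi_i)_i$ of $A_k$ with each $(U_i,\varphi_i)$ a chart (using fact i) above), so that $\varphi_i:U_i\to E_i:=\varphi_i(U_i)\subseteq\R^k$ is biLipschitz and sends $\mm\restr{U_i}$ to a measure comparable to $\mathcal L^k\restr{E_i}$; in particular both $\varphi_i$ and $\varphi_i^{-1}$ are of bounded deformation, since $\mm$-comparability with the Lebesgue measure gives the required compression bounds in both directions. Then $\d\varphi_i:L^2(T\X)\restr{U_i}\to L^2(T\R^k)\restr{E_i}\sim L^2(E_i,\R^k)$ is well-defined, and by the same construction applied to $\varphi_i^{-1}$ we get $\d(\varphi_i^{-1})$; using the chain rule for the differential (Corollary \ref{cor_functoriality_differential}) applied to $\varphi_i^{-1}\circ\varphi_i=\Id_{U_i}$ and $\varphi_i\circ\varphi_i^{-1}=\Id_{E_i}$, together with the fact that the differential of the identity is the identity, we conclude that $\d\varphi_i$ is a bijection with inverse $\d(\varphi_i^{-1})$. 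Moreover the bounds \eqref{f_differential_bdd_deform_map_2} for $\varphi_i$ and $\varphi_i^{-1}$, combined with the $(1+\eps)$-biLipschitz estimates (or even just the biLipschitz ones), show that $\d\varphi_i$ is bi-Lipschitz as a map of Banach spaces, hence a module isomorphism onto $L^2(E_i,\R^k)\sim L^2(T\R^k)\restr{E_i}$.

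Next I would transport a basis. Since $L^2(T\R^k)\restr{E_i}\sim L^2(E_i,\R^k)$ has the constant vector fields $\e_1,\dots,\e_k$ as a basis of dimension $k$ on $E_i$ (this is the concrete identification recalled at the end of Section 2.4), the elements $v^i_1,\dots,v^i_k:=\d(\varphi_i^{-1})(\e_1),\dots,\d(\varphi_i^{-1})(\e_k)\in L^2(T\X)\restr{U_i}$ form a basis of $L^2(T\X)\restr{U_i}$: the module isomorphism property of $\d\varphi_i$ (it is linear, continuous, $L^\infty$-linear after composing with pullback of $L^\infty$ functions via $\varphi_i$, invertible, and has continuous inverse) transfers both independence on $U_i$ and the generating property. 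Then, glueing over the countable $\mm$-partition $(U_i)_i$ of $A_k$ and using that $\X=\cup A_k$ restricts modules compatibly, the vectors $w_j:=\sum_i \nchi_{U_i} v^i_j$, $j=1,\dots,k$, give $k$ elements of $L^2(T\X)$ that are independent on $A_k$ and generate $L^2(T\X)\restr{A_k}$; that is, a basis of cardinality $k$ on $A_k$. Since local dimension is well-defined, this proves $L^2(T\X)$ has dimension $k$ on $A_k$.

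The main obstacle is the bookkeeping around the $L^\infty$-linearity of $\d\varphi_i$: the pullback and differential intertwine multiplication by $h\in L^\infty$ on $\Y=\R^k$ with multiplication by $h\circ\varphi_i$ on $\X$ (this is visible in \eqref{f_pullback_bdd_deform_map_1} and in the computation of $L_{\sf v}(h\omega)$ inside the proof of Proposition \ref{prop_differential_bdd_deform_map}), so one must check carefully that independence and generation are preserved under this ``twisted'' morphism. Concretely, if $\sum_j f_j v^i_j = 0$ on $U_i$ for $f_j\in L^\infty(\mm)$, applying $\d\varphi_i$ gives $\sum_j (f_j\circ\varphi_i^{-1})\e_j=0$ on $E_i$, whence $f_j\circ\varphi_i^{-1}=0$ $\mathcal L^k$-a.e.\ on $E_i$, hence $f_j=0$ $\mm$-a.e.\ on $U_i$ by the comparability of $(\varphi_i)_*\mm\restr{U_i}$ with $\mathcal L^k$; the generating property is transferred similarly, approximating an arbitrary element of $L^2(T\X)\restr{U_i}$ by its image's finite-sum approximations in $L^2(E_i,\R^k)$ and pulling back via the continuous $\d(\varphi_i^{-1})$. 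Everything else is routine once this twisted-linearity is handled, and I would also remark briefly that the same argument shows $L^2(T\X)\restr{A_k}$ is isomorphic to $L^2(A_k,\R^k)$ after the canonical identifications, which is the statement actually needed downstream.
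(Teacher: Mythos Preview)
Your argument is correct and is essentially the paper's proof spelled out in more detail: both use the chain rule (Corollary~\ref{cor_functoriality_differential}) to see that $\d\varphi_i$ is a twisted $L^\infty$-linear isomorphism from $L^2(T\X)\restr{U_i}$ onto $L^2(T\R^k)\restr{E_i}\sim L^2(E_i,\R^k)$, hence preserves local dimension, and then pass from the charts $U_i$ to all of $A_k$. The only small caveat is that your glued $w_j=\sum_i\nchi_{U_i}v^i_j$ need not land in $L^2(T\X)$ without inserting summable positive weights $c_i$ (say with $\sum_i c_i^2\,\Lip(\varphi_i^{-1})^2\,\mm(U_i)<\infty$), after which the basis property on $A_k$ follows exactly as you wrote.
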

\begin{proof}
Let $\mathscr{A}=\big\{(U^k_i,\varphi^k_i)\big\}_{k,i}$ be an atlas on $\rm{X}$. The claim is equivalent to the fact that for every $U^k_i$ with $\mm(U^k_i)>0$ the dimension of $L^2(T\X)$ on $U^k_i$ is $k$. For such $U^k_i$ the map $\varphi^k_i$ is invertible, of bounded deformation and with inverse of bounded deformation, hence taking into account Corollary \ref{cor_functoriality_differential} we see that $\d\varphi^k_i:L^2(T\X)\restr{U^k_i}\to L^2(T\R^k)\restr{\varphi^k_i(U^k_i)}$ is continuous, invertible, with continuous inverse and sends $h{\sf v}$ to $h\circ (\varphi^k_i)^{-1}\d\varphi^k_i({\sf v})$. It is then clear that the dimensions of $L^2(T\X)\restr{U^k_i}$ and $L^2(T\R^k)\restr{\varphi^k_i(U^k_i)}$ coincide and, since the latter has dimension $k$, the conclusion follows.
\end{proof}
\begin{remark}{\rm
Using the finite dimensionality results obtained by Cheeger in \cite{Cheeger00} it is not hard to see that the dimensional decomposition $(A_k)$ of a PI space which is also $\mm$-rectifiable must be so that $\mm(A_k)=0$ for all $k$ sufficiently large. Yet, our discussion is independent on this specific result and thus we won't insist on this point.
}\fr\end{remark}
When we restrict our attention to the smaller class of strongly $\mathfrak{m}$-rectifiable spaces, we have a stronger geometric characterization of the tangent module. Section \ref{sect_equiv_L^2(TX)_and_L^2(T_GH_X)} will be entirely devoted to describe such result.
In order to further develop our theory in that direction, we need to provide any strongly $\mathfrak{m}$-rectifiable space $(\mbox{X},\mathsf{d},\mathfrak{m})$ with a special sequence of atlases, which are aligned in a suitable sense.

\begin{definition}[Aligned family of atlases]\label{def_aligned_atlases}
Let $(\emph{X},\mathsf{d},\mathfrak{m})$ be a strongly $\mathfrak{m}$-rectifiable space.
Let $\varepsilon_n\downarrow 0$ and $\delta_n\downarrow 0$. Let $(\mathscr{A}_n)_{n\in\mathbb{N}}$ be a sequence of atlases on $\emph{X}$. Then we say that $(\mathscr{A}_n)_n$ is an \emph{aligned family of atlases} of parameters $\varepsilon_n$ and $\delta_n$ provided the following conditions are satisfied:
\begin{itemize}
\item[\emph{(i)}] Each $\mathscr{A}_n=\big\{(U^{k,n}_i,\varphi^{k,n}_i)\big\}_{k,i}$ is an $\varepsilon_n$-atlas and the domains $U^{k,n}_i$ are compact.
\item[\emph{(ii)}] The family $(U^{k,n}_i)_{k,i}$ is a refinement of $(U^{k,n-1}_j)_{k,j}$ for any $n\in\mathbb{N}^+$.
\item[\emph{(iii)}] If $n\in\mathbb{N}^+$, $k\in\N$ and $i,j\in\mathbb{N}$ satisfy $U^{k,n}_i\subseteq U^{k,n-1}_j$, then
\begin{equation}\label{f_aligned_atlases}
\bigg\|\,{\emph{d}}\Big(\emph{id}_{\mathbb{R}^k}-\varphi^{k,n-1}_j\circ\big(\varphi^{k,n}_i\big)^{-1}\Big)(y)\bigg\|\leq\delta_n\quad\mbox{ for }\mathcal{L}^k\mbox{-a.e. }y\in\varphi^{k,n}_i(U^{k,n}_i)\mbox{.}
\end{equation}
\end{itemize}
\end{definition}
The discussions made before grant that any strongly $\mm$-rectifiable space admits atlases satisfying $(i),(ii)$ above. In fact, as we shall see in a moment, also $(iii)$ can be fulfilled by an appropriate choice of atlases, but in order to show this we need a small digression.

Recall that $O(\R^k)$ denotes the group of linear isometries of $\R^k$ and for $\eps>0$ let us introduce
\[
O^\eps(\R^k):=\Big\{T:\R^k\to\R^k\text{ linear, invertible and such that }\|T\|,\|T^{-1}\|\leq 1+\eps\Big\}.
\]
Notice that $O^\eps(\R^k)$ - being closed and bounded - is compact for every $\eps>0$ and that $O(\R^k)=\cap_{\eps>0}O^\eps(\R^k)$. Then we have the following simple result:
\begin{proposition}\label{lemma_approximation_R_Borel}
Let $k\in\N$ and $\delta>0$. Then there exists $\eps>0$ and a Borel map $R:O^\eps(\R^k)\to O(\R^k)$ with finite image such that
\[
\|T-R(T)\|\leq\delta\qquad\forall T\in  O^\eps(\R^k).
\]
\end{proposition}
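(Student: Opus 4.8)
The plan is to reduce everything to compactness of the groups $O^\eps(\R^k)$ and then to construct $R$ by a routine ``first-index'' measurable selection.

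First I would produce the parameter $\eps$. Let $N$ denote the open $\delta/2$-neighbourhood of $O(\R^k)$ in the operator norm, i.e.\ $N:=\{T:\R^k\to\R^k\text{ linear}:\mathrm{dist}(T,O(\R^k))<\delta/2\}$. For $\eps\in(0,1]$ the set $F_\eps:=O^\eps(\R^k)\setminus N$ is closed (being the intersection of the closed set $O^\eps(\R^k)$ with the closed complement of $N$) and contained in the compact set $O^1(\R^k)$, hence compact. The family $(F_\eps)_{\eps\in(0,1]}$ is decreasing in $\eps$ and, since $\bigcap_{\eps>0}O^\eps(\R^k)=O(\R^k)\subseteq N$, we get $\bigcap_{\eps>0}F_\eps=\emptyset$. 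By the finite intersection property there is $\eps\in(0,1]$ with $F_\eps=\emptyset$; equivalently, for every $T\in O^\eps(\R^k)$ there is $S\in O(\R^k)$ with $\|T-S\|\le\delta/2$. This $\eps$ is the one claimed in the statement.

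Next, using compactness of $O(\R^k)$ I would fix finitely many $S_1,\dots,S_N\in O(\R^k)$ whose open balls $B(S_j,\delta/2)$ cover $O(\R^k)$, and set $A_j:=\overline B(S_j,\delta/2)\cap O(\R^k)$, which is compact. The functions $T\mapsto\mathrm{dist}(T,A_j)$ are continuous, so
\[
E_j:=\big\{T\in O^\eps(\R^k):\mathrm{dist}(T,A_j)\le\delta/2\big\}
\]
is Borel (indeed closed). These cover $O^\eps(\R^k)$: given such a $T$, the first step yields $S\in O(\R^k)$ with $\|T-S\|\le\delta/2$, and $S\in A_j$ for some $j$ because the open balls cover $O(\R^k)$, hence $\mathrm{dist}(T,A_j)\le\delta/2$ and $T\in E_j$. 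Finally I would define $R(T):=S_{j(T)}$ with $j(T):=\min\{j:T\in E_j\}$; this has finite image contained in $O(\R^k)$, is Borel because $R^{-1}(\{S_j\})=E_j\setminus\bigcup_{i<j}E_i$, and satisfies $\|T-R(T)\|\le\delta$ by the triangle inequality applied to $T$, a nearest point of $A_{j(T)}$, and $S_{j(T)}$.

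There is no serious obstacle here: the only piece with real content is the compactness argument in the first step that yields $\eps$, and the only point requiring a little care is to take the covering balls \emph{open} (so that they cover $O(\R^k)$) while taking the sets $A_j$ \emph{closed} (so that the distance functions are continuous and the $E_j$ Borel). The apparent difficulty of choosing $R$ in a measurable way disappears thanks to the first-index selection.
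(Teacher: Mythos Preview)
Your proof is correct and follows essentially the same approach as the paper: a compactness/finite-intersection-property argument to produce $\eps$, followed by a first-index selection on a finite cover of $O(\R^k)$ to define the Borel map $R$. The only cosmetic difference is that the paper covers $O(\R^k)$ directly by $\delta$-balls and shows $O^\eps(\R^k)\subset\bigcup_i B_\delta(T_i)$ in one step, whereas you split $\delta$ into two $\delta/2$ contributions via the intermediate sets $A_j$; this extra layer is harmless but unnecessary.
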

\begin{proof} From the compactness of $O(\R^k)$ we know that there are $T_1,\ldots,T_n\in O(\R^k)$ such that $O(\R^k)\subset U_\delta:=\cup_iB_\delta(T_i)$. We claim that there exists $\eps>0$ such that $O^\eps(\R^k)\subset U_\delta$ and argue by contradiction. If not, the compact set $K^\eps:=O^\eps(\R^k)\setminus U_\delta$ would be not empty for every $\eps>0$. Since clearly $K^\eps\subset K^{\eps'}$ for $\eps\leq\eps'$, the family $K^\eps$ has the finite intersection property, but on the other hand the identity $O(\R^k)=\cap_{\eps>0}O^\eps(\R^k)$ yields $\cap_{\eps>0}K^\eps=\emptyset$, which is a contradiction.

Thus there is $\eps>0$ such that $O^\eps(\R^k)\subset U_\delta$. For such $\eps$ we define $R:O^\eps(\R^k)\to O(\R^k)$ to be equal to $T_1$ on $B_\delta(T_1)$ and then recursively equal to $T_n$ on $B_\delta(T_n)\setminus\cup_{i<n}B_\delta(T_i)$.
\end{proof}
Using Proposition \ref{lemma_approximation_R_Borel} it is possible to show that any strongly $\mathfrak{m}$-rectifiable space admits an aligned family of atlases:
\begin{theorem}\label{thm_existence_aligned_atlases}
Let $(\emph{X},\mathsf{d},\mathfrak{m})$ be a strongly $\mathfrak{m}$-rectifiable metric measure space and  $\varepsilon_n\downarrow 0$ and $\delta_n\downarrow 0$ be two given sequences. Then $\emph{X}$ admits an aligned family $(\mathscr{A}_n)_n$ of atlases of parameters $\varepsilon_n$ and $\delta_n$.
\end{theorem}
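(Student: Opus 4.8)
The plan is to build the atlases $\mathscr A_n$ recursively in $n$, refining at each step and correcting the new chart maps by linear isometries so as to force \eqref{f_aligned_atlases}. Since conditions (i)--(iii) in Definition \ref{def_aligned_atlases} can be checked over each piece $A_k$ of the dimensional decomposition separately (chart domains of different dimensions are disjoint, being contained in disjoint sets $A_k$), it suffices to carry out the construction over a fixed $A_k$ with $\mm(A_k)>0$ and then take unions over $k$; so fix such a $k$, and agree that whenever a Borel domain is produced we may freely replace it by a compact subset of full $\mm$-measure, which is harmless because we work with $\mm$-partitions.

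The first move is to fix auxiliary parameters. For each $n\ge 1$ apply Proposition \ref{lemma_approximation_R_Borel} in dimension $k$ with $\delta:=\delta_n/3$ to obtain $\eta_n\in(0,1]$ and a Borel map $R_n\colon O^{\eta_n}(\R^k)\to O(\R^k)$ with finite image satisfying $\|T-R_n(T)\|\le\delta_n/3$ for every $T\in O^{\eta_n}(\R^k)$; then set $\mu_n:=\min\{\varepsilon_n,\eta_n/3,\eta_{n+1}/3\}$, the last term omitted when $n=0$. The $n$-th atlas will be assembled from charts that are genuinely $(1+\mu_n)$-biLipschitz, hence far finer than the nominal $(1+\varepsilon_n)$; this immediately gives the $\varepsilon_n$-atlas part of (i) since $\mu_n\le\varepsilon_n$, and, more importantly, ensures that any transition $\sigma$ between a $(1+\mu_{n-1})$-biLipschitz and a $(1+\mu_n)$-biLipschitz chart is $(1+\mu_{n-1})(1+\mu_n)$-biLipschitz, so that by Rademacher's theorem and the discussion of Section \ref{subsect_Properties_of_Lipschitz_Functions} (applied to $\sigma$ and to $\sigma^{-1}$, together with the a.e.\ identity $\d(\sigma^{-1})\circ\d\sigma=\mathrm{Id}$) its differential lies $\mathcal L^k$-a.e.\ in $O^{\eta_n}(\R^k)$, because $(1+\mu_{n-1})(1+\mu_n)-1\le\eta_n$.

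For $n=0$ let $\mathscr A_0$ be (the $A_k$-part of) a $\mu_0$-atlas of $\X$ with compact domains, which exists by strong $\mm$-rectifiability and the remarks following Definition \ref{def_charts_and_atlases}; conditions (ii)--(iii) are vacuous there. For the recursive step, assume $\mathscr A_{n-1}$ has been built with $(1+\mu_{n-1})$-biLipschitz charts $(U^{k,n-1}_j,\varphi^{k,n-1}_j)$ with compact domains, pick a $\mu_n$-atlas of $\X$ and call $(V_l,\phi_l)$ its $A_k$-charts, and form $W_{j,l}:=U^{k,n-1}_j\cap V_l$ together with the transition $\tau_{j,l}:=\varphi^{k,n-1}_j\circ\phi_l^{-1}$ on $\phi_l(W_{j,l})$. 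As just observed $\d\tau_{j,l}$ takes values $\mathcal L^k$-a.e.\ in the compact set $O^{\eta_n}(\R^k)$. Covering the latter by finitely many balls $B(S_1,\delta_n/3),\dots,B(S_m,\delta_n/3)$ with centres $S_s\in O^{\eta_n}(\R^k)$, split $\phi_l(W_{j,l})$ up to an $\mathcal L^k$-null set into Borel sets $Z_s$ on which $\|\d\tau_{j,l}(\cdot)-S_s\|<\delta_n/3$, put $W^s_{j,l}:=\phi_l^{-1}(Z_s)$, and define the new chart on it by
\[
\psi^s_{j,l}:=R_n\big(S_s^{-1}\big)^{-1}\circ\phi_l\restr{W^s_{j,l}}.
\]
The family $\{(W^s_{j,l},\psi^s_{j,l})\}_{j,l,s}$, relabelled, is $\mathscr A_n$ over $A_k$; taking unions over $k$ then defines $\mathscr A_n$.

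It remains to check the defining conditions. Since $R_n(S_s^{-1})^{-1}\in O(\R^k)$ is a linear isometry, $\psi^s_{j,l}$ is again $(1+\mu_n)$-biLipschitz and is a chart (composing with a linear isometry leaves the two-sided bound \eqref{f_chart} unchanged), and the $W^s_{j,l}$ form an $\mm$-partition of $A_k$ with compact domains, giving (i); and $W^s_{j,l}\subseteq U^{k,n-1}_j$ with the $U^{k,n-1}_j$ disjoint, giving (ii). For (iii), with $U^{k,n}_i=W^s_{j,l}\subseteq U^{k,n-1}_j$ the relevant transition is
\[
\varphi^{k,n-1}_j\circ(\psi^s_{j,l})^{-1}=\tau_{j,l}\circ R_n\big(S_s^{-1}\big),
\]
whose differential at $y$ equals $AB$ with $A:=\d\tau_{j,l}\big(R_n(S_s^{-1})y\big)$ and $B:=R_n(S_s^{-1})$; as $R_n(S_s^{-1})$ is an isometry, for $\mathcal L^k$-a.e.\ $y\in\psi^s_{j,l}(W^s_{j,l})$ we have $R_n(S_s^{-1})y\in Z_s$, hence $\|A-S_s\|<\delta_n/3$, while $\|B-S_s^{-1}\|\le\delta_n/3$, $\|B\|=1$ and $\|S_s\|\le 1+\eta_n\le 2$, so
\[
\|AB-\mathrm{Id}\|\le\|A-S_s\|\,\|B\|+\|S_s\|\,\|B-S_s^{-1}\|\le\tfrac{\delta_n}{3}+2\cdot\tfrac{\delta_n}{3}=\delta_n,
\]
which is exactly \eqref{f_aligned_atlases}. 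The main obstacle is precisely the tension between (i) and (iii): an $\varepsilon_n$-atlas only controls transitions up to a biLipschitz factor of order $\varepsilon_{n-1}+\varepsilon_n$, which need not lie below $\delta_n$, so one cannot simply iterate the existence of $\varepsilon_n$-atlases — the cure is to use charts much finer than required and to straighten them by honest isometries selected measurably through Proposition \ref{lemma_approximation_R_Borel}, the one non-formal ingredient beyond this being the a.e.\ existence, measurability and two-sided boundedness of the differentials of the transition maps.
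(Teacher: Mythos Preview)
Your proof is correct and follows essentially the same strategy as the paper's: build the atlases recursively, at each step refine against a sufficiently fine $\eps$-atlas, and post-compose the new charts by linear isometries chosen via Proposition~\ref{lemma_approximation_R_Borel} so as to force~\eqref{f_aligned_atlases}. The only cosmetic difference is that the paper applies the Borel selector $R$ directly to $\d\tau$ and partitions by its finitely many values, whereas you first partition by a finite $\delta_n/3$-cover of $O^{\eta_n}(\R^k)$ with centres $S_s$ and then correct by $R_n(S_s^{-1})^{-1}$; this extra intermediary is harmless but unnecessary, and your phrasing ``the last term omitted when $n=0$'' should read that the \emph{middle} term $\eta_n/3$ is omitted (you need $\mu_0\le\eta_1/3$ for the first transition bound).
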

\begin{proof} Let $(A_k)$ be the dimensional decomposition of $\X$ and notice that to conclude it is sufficient to build, for every $k\in\N$, aligned charts as in $(iii)$ of Definition \ref{def_aligned_atlases} covering $\mm$-almost all $A_k$.

For  $k,n\in\mathbb{N}$, let $\varepsilon'_{n,k}$ be associated to $\delta_n$ and $k$ as in Proposition \ref{lemma_approximation_R_Borel} and choose $\bar{\varepsilon}_{n,k}>0$ such that
\begin{equation}\label{f_existence_aligned_atlases_1}
\bar{\varepsilon}_{n,k}\leq\varepsilon_n\quad\mbox{ and }\quad(1+\bar{\varepsilon}_{n-1,k})(1+\bar{\varepsilon}_{n,k})\leq1+\varepsilon'_{n,k}\quad\mbox{ for every }k,n\in\mathbb{N}\mbox{.}
\end{equation}
We now construct  the required aligned family $(\mathscr{A}_n)_n$ of atlases   by  recursion: start observing that since $(\X,\sfd,\mm)$ is strongly $\mm$-rectifiable, there exists an atlas $\mathscr{A}_0$ such that the charts with domain included in $A_k$ are  $\bar\eps_{0,k}$-biLipschitz. Now assume that for some $n\in\N$  we have already defined $\mathscr{A}_0,\ldots,\mathscr{A}_{n-1}$ satisfying the alignment conditions and say that $\mathscr{A}_{n-1}=\big\{(U^k_i,\varphi^k_i)\big\}_{k,i}$. Again using the strong $\mm$-rectifiability of $\X$, find an atlas   $\big\{(V^k_j,\psi^k_j)\big\}_{k,j}$   whose domains $(V^k_j)_{k,j}$ constitute a refinement of the domains $(U^k_i)_{k,i}$ of $\mathscr{A}_{n-1}$ and such that those charts with domain included in $A_k$ are $\bar\eps_{n,k}$-biLipschitz. 

Fix  $k,j\in\mathbb{N}$ and let $i\in\mathbb{N}$ be the unique index  such that $V^k_j\subseteq U^k_i$. For the sake of brevity, let us denote by $\tau$ the transition map $\varphi^k_i\circ(\psi^k_j)^{-1}:\,\psi^k_j(V^k_j)\to\varphi^k_i(V^k_j)$ and observe that it is $(1+\varepsilon'_{n,k})$-biLipschitz by \eqref{f_existence_aligned_atlases_1}. Hence its  differential $\d\tau$ satisfies $\big\|{\mbox{d}}\tau(y)\big\|,\big\|{\mbox{d}}\tau(y)^{-1}\big\|\leq 1+\varepsilon'_{n,k}$, or equivalently 
${\mbox{d}}\tau(y)\in O^{\eps'_{n,k}}(\R^k)$, for $\mathcal{L}^k$-a.e. $y\in\psi^k_j(V^k_j)$.

Let $R:\,O^{\eps'_{n,k}}(\R^k)\to{O}(\mathbb{R}^k)$ be given by Proposition \ref{lemma_approximation_R_Borel} with  $\delta:=\delta_n$ and denote by  $F^k_j\subset O(\R^k)$ its finite image. For $T\in F^k_j$ let $P_T:=(R\circ\d\tau)^{-1}(T)\subset \R^k$, so that $(P_T)_{T\in F^k_j}$ is a $\mathcal L^k$-partition of $\psi^k_j(V^k_j)$. 

For $\mathcal L^k$-a.e.\ $y\in T(P_T)\subset\R^k$ we have
\begin{equation}
\label{eq:allineato}
\begin{split}
\Big\|{\mbox{d}}\big(\varphi^k_i\circ\big(T\circ\psi^k_j\big)^{-1}-\mbox{id}_{\mathbb{R}^k}\big)(y)\Big\|
&=\Big\|{\mbox{d}}\big(\tau\circ T^{-1}-\mbox{id}_{\mathbb{R}^k}\big)(y)\Big\|\\
&=\Big\|{\mbox{d}}\big((\tau-T)\circ T^{-1}\big)(y)\Big\|\\
&\leq\big\|{\mbox{d}}\tau(T^{-1}(y))-T\,\big\|\,\big\|T^{-1}\big\|\\
&=\big\|{\mbox{d}}\tau(T^{-1}(y))-T\big\|\\
(\text{because $T^{-1}(y)\in P_T$})\qquad&=\big\|{\mbox{d}}\tau(T^{-1}(y))-R\big({\mbox{d}}\tau(T^{-1}(y))\big)\big\|\\
(\text{by definition of $R$})\qquad&\leq\delta_n.
\end{split}
\end{equation}
We therefore define 
\begin{equation}\label{f_existence_aligned_atlases_2}
\bar{U}^k_{j,T}:=(\psi^k_j)^{-1}(P_T)\quad\mbox{ and }\quad
\bar{\varphi}^k_{j,T}:=T\circ\psi^k_j\restr{\bar{U}^k_{j,T}}\quad\mbox{ for every }T\in F^k_j\mbox{,}
\end{equation}
so that accordingly
\begin{equation}\label{f_existence_aligned_atlases_3}
\mathscr{A}_n:=
\Big\{\big(\bar{U}^k_{j,T},\bar{\varphi}^k_{j,T}\big)\,:\,k,j\in\mathbb{N},\,T\in F^k_j\Big\}
\end{equation}
is an atlas on $(\mbox{X},\mathsf{d},\mathfrak{m})$, which fulfills (ii), (iii) of Definition \ref{def_aligned_atlases} and such that the charts with domain included in $A_k$ are $\bar\eps_{n,k}$-biLipschitz.

Up to a further refining we can assume that the charts in $\mathscr{A}_n$ have compact domains and, since $\bar\eps_{n,k}\leq \eps_n$ for every $k,n\in\N$, the thesis is proved.
\end{proof}
\section{Gromov-Hausdorff Tangent Module}
\subsection{Measurable Banach Bundle}
Let $(\mbox{X},\mathsf{d},\mathfrak{m})$ be a fixed metric measure space. We propose a notion of measurable Banach bundle:
%$(\mbox{X},\mathsf{d},\mathfrak{m})$:
\begin{definition}[Measurable Banach bundle]\label{def_meas_Banach_bundle}
The quadruplet $\mathbb{T}:=\big(T,\mathcal{M},\pi,\boldsymbol{\mathsf{n}}\big)$ is said to be a \emph{measurable Banach bundle} over $(\emph{X},\mathsf{d},\mathfrak{m})$ provided:
\begin{itemize}
\item[i)] $\mathcal M$ is a $\sigma$-algebra over the set $T$,
\item[ii)] $\pi$ is a measurable map  from $(T,\mathcal{M})$ to $\big(\mbox{\X},\mathscr{B}(\mbox{\X})\big)$ which we shall call  {\rm projection} and
\begin{equation}\label{f_projection_mvb}
T_x:=\pi^{-1}\big(\{x\}\big)\quad\mbox{ is an }\mathbb{R}\mbox{-vector space for }\mathfrak{m}\mbox{-a.e. }x\in\X\mbox{.}
\end{equation}
\item[iii)] $\boldsymbol{\mathsf{n}}:\,T\to[0,+\infty)$ is  a measurable map which we shall call {\rm norm}  such that for $\mm$-a.e.\ $x\in X$ it holds:
\begin{equation}\label{f_norm_mvb}
\begin{split}
&{\boldsymbol{\mathsf{n}}|}_{T_x}\mbox{ is a norm on }T_x\mbox{,}\\
&\big(T_x,{\boldsymbol{\mathsf{n}}}\restr{T_x}\big)\mbox{ is a Banach space,}\\
&\mathscr{B}(T_x)=\mathcal{M}\restr{T_x}:=\big\{E\cap T_x\,:\,E\in\mathcal{M}\big\}\mbox{.}
\end{split}
\end{equation}
\end{itemize}
\end{definition}

Given two measurable Banach bundles $\mathbb{T}_i=\big(T_i,\mathcal{M}_i,\pi_i,\boldsymbol{\mathsf{n}}_i\big)$, $i=1,2$, a \textsl{bundle morphism} is a measurable map $\varphi:\,T_1\to T_2$ such that for $\mm$-a.e.\ $x\in \X$ it holds
\begin{equation}\label{f_bundle_morphism}
\begin{split}
&\varphi\mbox{ maps }(T_1)_x\mbox{ into }(T_2)_x\mbox{,}\\
&{\varphi}\restr{(T_1)_x}\mbox{ is linear and }1\mbox{-Lipschitz from }\big((T_1)_x,{\boldsymbol{\mathsf{n}}_1}\restr{(T_1)_x}\big)\mbox{ to }\big((T_2)_x,{\boldsymbol{\mathsf{n}}_2}\restr{(T_2)_x}\big)\mbox{.}
\end{split}
\end{equation}
Two bundle morphisms $\varphi,\psi:\,T_1\to T_2$ are declared to be equivalent provided
\begin{equation}\label{f_equiv_rel_bundle_morphisms}
{\varphi}\restr{(T_1)_x}={\psi}\restr{(T_1)_x}\mbox{ for }\mathfrak{m}\mbox{-a.e. }x\in\mbox{X}\mbox{}
\end{equation}
and accordingly two measurable Banach bundles $\mathbb{T}_i=\big(T_i,\mathcal{M}_i,\pi_i,\boldsymbol{\mathsf{n}}_i\big)$, $i=1,2$ are declared to be isomorphic provided there are bundle morphisms $\varphi: T_1\to T_2$ and $\psi: T_2\to T_1$ such that $\varphi\circ\psi\sim{\rm id}_{T_2}$ and $\psi\circ\varphi\sim{\rm id}_{T_1}$, which is the same as to say that
\[\begin{split}
&{\psi\circ\varphi}\restr{(T_1)_x}=\mbox{id}_{(T_1)_x}\mbox{ and }{\varphi\circ\psi}\restr{(T_2)_x}=\mbox{id}_{(T_2)_x}
\mbox{ for }\mathfrak{m}\mbox{-a.e. }x\in\mbox{X}\mbox{,}\\
&{\varphi}\restr{(T_1)_x}:\,(T_1)_x\to
(T_2)_x\mbox{ is an isometric isomorphism for }
\mathfrak{m}\mbox{-a.e. }x\in\mbox{X}\mbox{.}
\end{split}\]
Let $\mathbb{T}=\big(T,\mathcal{M},\pi,\boldsymbol{\mathsf{n}}\big)$ be a measurable Banach bundle over $\mathbb{X}$. A \textsl{ section} of $\mathbb{T}$ is a measurable function $\mathsf{v}:\,\mbox{X}\to T$ such that $\pi\circ\mathsf{v}=\mbox{id}_{\mbox{\scriptsize{X}}}$ holds $\mathfrak{m}$-a.e. in $\mbox{X}$. We denote by $[\mathsf{v}]$ the equivalence class of $\mathsf{v}$ with respect to $\mathfrak{m}$-a.e.\ equality and introduce the space $L^2(\mathbb{T})$ of $L^2$-sections as
\begin{equation}\label{f_L^2(T)}
L^2(\mathbb{T}):=\left\{[\mathsf{v}]\;\bigg|\;\mathsf{v}\mbox{ is a section of }\mathbb{T}\mbox{ with }\int_{\mbox{\scriptsize{X}}}\boldsymbol{\mathsf{n}}\big(\mathsf{v}(x)\big)^2\,\mbox{d}\mathfrak{m}(x)<+\infty\right\}\mbox{.}
\end{equation}
With a (common) slight abuse of notation, the elements of $L^2(\mathbb{T})$ will be denoted by $\mathsf{v}$ instead of $[\mathsf{v}]$.

Notice that $L^2(\mathbb{T})$ has a canonical structure of  $L^2(\mathfrak{m})$-normed $L^\infty(\mathfrak{m})$-module on ${\rm X}$: for  $\mathsf{v},\mathsf{w}\in L^2(\mathbb{T})$, $\alpha,\beta\in\mathbb{R}$ and $h\in L^\infty(\mathfrak{m})$  define
\begin{equation}\label{f_ptwse_operations_L^2(T)}
\begin{split}
&(\alpha\,\mathsf{v}+\beta\,\mathsf{w})(x):=\alpha\,\mathsf{v}(x)+\beta\,\mathsf{w}(x)\in T_x\mbox{,}\\
&(h\mathsf{v})(x):=h(x)\,\mathsf{v}(x)\in T_x\mbox{,}\\
&|\mathsf{v}|(x):=\boldsymbol{\mathsf{n}}\big(\mathsf{v}(x)\big)\mbox{,}
\end{split}
\end{equation}
for $\mathfrak{m}$-a.e. $x\in\mbox{X}$.
\begin{remark}\label{rmk_T_to_L^2(T)_functor}{\rm
The collection of measurable Banach bundles on $\X$ and of isomorphism classes of bundle morphisms form a category, which we shall denote by $\boldsymbol{\rm{MBB}}({\X})$.

Similarly, the collection of $L^2(\mathfrak{m})$-normed $L^\infty(\mathfrak{m})$-modules on ${\X}$ and of 1-Lipschitz module morphisms between them form a category, which we denote by   $\boldsymbol{\rm{Mod}}_{2-L^\infty}({\X})$.

The map which sends  each measurable Banach bundle $\mathbb{T}$ to the space of its $L^2$-sections $L^2(\mathbb{T})$ and  each bundle morphism $\varphi:\,T_1\to T_2$ to the map $L^2(\mathbb{T}_1)\ni\mathsf{v}\mapsto\varphi\circ\mathsf{v}\in L^2(\mathbb{T}_2)$, is easily seen to be a fully faithful functor, so that $\boldsymbol{\rm{MBB}}({\X})$ can be thought of as a full subcategory of $\boldsymbol{\rm{Mod}}_{2-L^\infty}({\X})$.
\fr}\end{remark}
\subsection{Gromov-Hausdorff Tangent Bundle}\label{subsect_GH_Tangent_Bundle}

Recall that given a measurable space $(S,\mathcal{M})$, a set $S'$ and a function $f:\,S\to S'$,  the \textsl{push-forward} $f_*\mathcal M$ of $\mathcal{M}$ via $f$ is the $\sigma$-algebra on $S'$ defined by
\begin{equation}\label{f_sigma-algebra_push_forward}
f_*\mathcal{M}:=\big\{E\subseteq S'\,:\,f^{-1}(E)\in\mathcal{M}\big\}\mbox{.}
\end{equation}
Notice that $f_*\mathcal{M}$ is the greatest $\sigma$-algebra $\mathcal{M}'$ on $S'$ for which the function $f$ is measurable from $(S,\mathcal{M})$ to $(S',\mathcal{M}')$.

With this said, let $(\X,\sfd,\mm)$ be a strongly $\mm$-rectifiable metric measure space,  $(A_k)$ its dimensional decomposition and define the following objects:
\begin{itemize}
\item[i)] The set $T_{\rm{GH}}\mbox{X}$ is defined as
\begin{equation}\label{f_T_GH_X}
T_{\rm{GH}}\mbox{X}:=\bigsqcup_{k\in\N} A_k\times\mathbb{R}^k
\end{equation}
and the $\sigma$-algebra $\mathcal{M}_{\rm{GH}}(\mbox{X})$ is given by
\begin{equation}\label{f_M_GH(X)}
\mathcal{M}_{\rm{GH}}(\mbox{X}):=\bigcap_{k\in\N}(\iota_k)_*\mathscr{B}(A_k\times\mathbb{R}^k)\mbox{,}
\end{equation}
where  $\iota_k:\,A_k\times\mathbb{R}^k\hookrightarrow T_{\rm{GH}}\mbox{X}$ is the natural inclusion, for every $k\in\N$.

In other words, a subset $E$ of $T_{\rm{GH}}\mbox{X}$ belongs to $\mathcal{M}_{\rm{GH}}(\mbox{X})$ if and only if $E\cap(A_k\times\mathbb{R}^k)$ is a Borel subset of $A_k\times\mathbb{R}^k$ for every $k\in\N$.
\item[ii)] The projection $\pi:\,T_{\rm{GH}}\mbox{X}\to\mbox{X}$ of $T_{\rm{GH}}\mbox{X}$ is given by
\begin{equation}\label{f_projection_GH}
\pi(x,v):=x\quad\mbox{ for every }(x,v)\in T_{\rm{GH}}\mbox{X}\mbox{.}
\end{equation}
\item[iii)] The norm $\boldsymbol{\mathsf{n}}:\,T_{\rm{GH}}\mbox{X}\to[0,+\infty)$ on $T_{\rm{GH}}\mbox{X}$ is given by
\begin{equation}\label{f_norm_GH}
\boldsymbol{\mathsf{n}}(x,v):={|v|}_{\mathbb{R}^k}\quad\mbox{ for every }k\in\N \mbox{ and }(x,v)\in A_k\times\mathbb{R}^k\subseteq T_{\rm{GH}}\mbox{X}\mbox{.}
\end{equation}\vspace{-0.8cm}
\end{itemize}

\begin{definition}[Gromov-Hausdorff tangent bundle]\label{def_GH_tangent_bundle}
The \emph{Gromov-Hausdorff tangent bundle} of $(\emph{X},\mathsf{d},\mathfrak{m})$ is the  measurable Banach bundle
\begin{equation}\label{f_GH_tangent_bundle}
\big(T_{\rm{GH}}\rm{X},\mathcal{M}_{\rm{GH}}(\rm{X}),\pi,\boldsymbol{\mathsf{n}}\big).
\end{equation}
The space of the $L^2$-sections of such bundle is called \emph{Gromov-Hausdorff tangent module} and is denoted by $L^2(T_{\rm{GH}}\rm{X})$.
\end{definition}
The choice of this measurable structure on $T_{\rm{GH}}\mbox{X}$ could seem to be na\"{i}ve, but we now prove that it
is the only one coherent with some (thus any) atlas on $(\mbox{X},\mathsf{d},\mathfrak{m})$, in the sense which we now describe.

Let us fix an $\varepsilon$-atlas $\mathscr{A}=\big\{(U^k_i,\varphi^k_i)\big\}_{k,i}$ on $(\mbox{X},\mathsf{d},\mathfrak{m})$. For every $k,i\in\mathbb{N}$, choose a constant $C^k_i\geq 1$ such that
\begin{equation}
\label{eq:miscarte}
(C^k_i)^{-1}\,\mathcal{L}^k\restr{\varphi^k_i(U^k_i)}\leq(\varphi^k_i)_*(\mathfrak{m}\restr{U^k_i})
\leq C^k_i\,\mathcal{L}^k\restr{\varphi^k_i(U^k_i)}.
\end{equation}
Fix a sequence of radii $r_j\downarrow 0$ and define $\widehat{\varphi}^k_{ij}:\,U^k_i\times U^k_i\to A_k\times\mathbb{R}^k$ as
\begin{equation}\label{f_psi_hat}
\widehat{\varphi}^k_{ij}(\bar{x},x):=\left(\bar{x}\,,\,\frac{\varphi^k_i(x)-\varphi^k_i(\bar{x})}{r_j}\right)
\quad\mbox{ for every }(\bar{x},x)\in U^k_i\times U^k_i\mbox{.}
\end{equation}
For the sake of brevity, for $k,i,j\in\N$ let us call
\begin{equation}\label{f_W^k_ij}
\begin{split}
W^k_{ij}&:=\widehat{\varphi}^k_{ij}(U^k_i\times U^k_i),\\
W^k&:={\bigcup}_{i,j\in\mathbb{N}}\,W^k_{ij}
\end{split}
\end{equation}
and notice that simple computations yield
\begin{equation}\label{f_W^k_ij_biLip}
\begin{split}
&\widehat{\varphi}^k_{ij}:\,U^k_i\times U^k_i\to W^k_{ij}\quad\mbox{ is }\sqrt{1+(1+\varepsilon)^2/(r_j)^2}\,\mbox{-biLipschitz,}\\
&\frac{(r_j)^k}{C^k_i}\,(\mathfrak{m}\otimes\mathcal{L}^k)\restr{ W^k_{ij}}
\leq\big(\widehat{\varphi}^k_{ij}\big)_*\big((\mathfrak{m}\otimes\mathfrak{m})\restr{U^k_i\times U^k_i}\big)
\leq(r_j)^k C^k_i\,(\mathfrak{m}\otimes\mathcal{L}^k)\restr{ W^k_{ij}}\mbox{.}
\end{split}
\end{equation}
In particular, $W^k_{ij}\in\mathscr{B}(A_k\times\mathbb{R}^k)$ for every $k,i,j$, thus accordingly also $W^k\in\mathscr{B}(A_k\times\mathbb{R}^k)$. Put $N_k:=(A_k\times\mathbb{R}^k)\setminus W^k$.
\begin{lemma} With the notation just introduced, for every $k\in\N$ we have 
\[
(\mathfrak{m}\otimes\mathcal{L}^k)(N_k)=0.
\]
\end{lemma}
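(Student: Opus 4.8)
The plan is to fix $k\in\N$ and reduce the assertion, via Fubini--Tonelli, to a Lebesgue-density statement on the fibres of $T_{\rm GH}\X$ over $A_k$. First note that, since $(U^k_i)_i$ is an $\mm$-partition of $A_k$, we have $\mm\big(A_k\setminus\bigcup_i U^k_i\big)=0$, hence $(\mm\otimes\mathcal L^k)\big((A_k\setminus\bigcup_i U^k_i)\times\R^k\big)=0$; moreover each $W^k_{ij}$ is contained in $U^k_i\times\R^k$ and the $U^k_i$ are pairwise disjoint, so $W^k\cap(U^k_i\times\R^k)=\bigcup_j W^k_{ij}$. Thus it suffices to show, for every fixed $i$, that
\[
(\mm\otimes\mathcal L^k)\Big(\big(U^k_i\times\R^k\big)\setminus\bigcup_{j\in\N}W^k_{ij}\Big)=0.
\]
All sets involved are Borel and the measures are $\sigma$-finite, so Tonelli's theorem rewrites the left-hand side as $\int_{U^k_i}\mathcal L^k\big(F_{\bar x}\big)\,\d\mm(\bar x)$, where $F_{\bar x}:=\big\{v\in\R^k:(\bar x,v)\notin\bigcup_j W^k_{ij}\big\}$; so I am reduced to proving that $\mathcal L^k(F_{\bar x})=0$ for $\mm$-a.e.\ $\bar x\in U^k_i$.

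To compute $F_{\bar x}$, set $V:=\varphi^k_i(U^k_i)\subset\R^k$ (a Borel set, being the injective Lipschitz image of a Borel set) and, for $\bar x\in U^k_i$, put $y:=\varphi^k_i(\bar x)\in V$. By the defining formula \eqref{f_psi_hat}, one has $(\bar x,v)\in W^k_{ij}$ precisely when $\varphi^k_i(x)=y+r_j v$ for some $x\in U^k_i$, i.e.\ precisely when $v\in r_j^{-1}(V-y)$; therefore $F_{\bar x}=\R^k\setminus\bigcup_{j}r_j^{-1}(V-y)$. Next, the Lebesgue differentiation theorem (Theorem \ref{thm_leb_diff_thm}) applied in $\R^k$ gives that $\mathcal L^k$-a.e.\ point of $V$ is of density $1$ for $V$; since $(\varphi^k_i)_*(\mm\restr{U^k_i})\ll\mathcal L^k$ by \eqref{eq:miscarte}, it follows that for $\mm$-a.e.\ $\bar x\in U^k_i$ the point $y=\varphi^k_i(\bar x)$ is of density $1$ for $V$. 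Hence it remains only to check that $\mathcal L^k\big(\R^k\setminus\bigcup_j r_j^{-1}(V-y)\big)=0$ whenever $y$ is a point of density $1$ for $V$.

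For such a $y$, fix $R>0$ and $j_0\in\N$. Changing variables and translating,
\[
\mathcal L^k\big(B_R(0)\cap r_{j_0}^{-1}(V-y)\big)=r_{j_0}^{-k}\,\mathcal L^k\big(B_{Rr_{j_0}}(y)\cap V\big)=R^k\,\omega_k\;\frac{\mathcal L^k\big(B_{Rr_{j_0}}(y)\cap V\big)}{\mathcal L^k\big(B_{Rr_{j_0}}(y)\big)},
\]
with $\omega_k:=\mathcal L^k(B_1(0))$, and since $Rr_{j_0}\to 0$ as $j_0\to\infty$ and $y$ has density $1$ for $V$, the right-hand side tends to $R^k\omega_k=\mathcal L^k(B_R(0))$. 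Because $B_R(0)\setminus\bigcup_j r_j^{-1}(V-y)\subseteq B_R(0)\setminus r_{j_0}^{-1}(V-y)$ for every $j_0$, we get $\mathcal L^k\big(B_R(0)\setminus\bigcup_j r_j^{-1}(V-y)\big)\le\mathcal L^k(B_R(0))-\mathcal L^k\big(B_R(0)\cap r_{j_0}^{-1}(V-y)\big)$, which tends to $0$ as $j_0\to\infty$; hence the left-hand side vanishes, and letting $R\to\infty$ finishes the argument. The computation is essentially routine; the step that most deserves care is the transfer of the ``$\mathcal L^k$-a.e.\ density $1$'' property from points of $V$ to $\mm$-a.e.\ $\bar x\in U^k_i$ (which is where the absolute continuity in \eqref{eq:miscarte} enters), together with the elementary monotonicity remark $\bigcup_j r_j^{-1}(V-y)\supseteq r_{j_0}^{-1}(V-y)$ that lets the limit of the measures of a single rescaled copy control the measure of the complement of the whole union.
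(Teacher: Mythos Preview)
Your proof is correct and follows essentially the same approach as the paper's: both reduce via Fubini to a fibrewise statement, identify the fibre over $\bar x\in U^k_i$ with $\R^k\setminus\bigcup_j r_j^{-1}(V-y)$ for $y=\varphi^k_i(\bar x)$, use the absolute continuity in \eqref{eq:miscarte} to ensure that $y$ is a density-$1$ point of $V$ for $\mm$-a.e.\ $\bar x$, and then exploit the density-$1$ condition to show that the rescalings $r_j^{-1}(V-y)$ fill up every ball $B_R(0)$ in measure. Your write-up is slightly more explicit in the preliminary reduction to a single $U^k_i$, but the argument is the same.
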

\begin{proof} For $k\in\N$ put 
\[
D_k:=\bigcup_{i\in\mathbb{N}}\big\{{x}\in U^k_i\,:\,\varphi^k_i({x})\text{ is a point of density 1 for }\varphi^k_i(U^k_i)\big\}.
\]
From \eqref{eq:miscarte} and \eqref{f_a.e._pt_1-density} we see that $\mathfrak{m}(A_k\setminus D_k)=0$, therefore for every $i,m,h\in\mathbb{N}$ and $\bar{x}\in D_k$, there is $j\in\mathbb{N}$ such that
$$1\geq\frac{\mathcal{L}^k\left(\frac{\varphi^k_i(U^k_i)-\varphi^k_i(\bar{x})}{r_j}\cap B_m(0)\right)}{\mathcal{L}^k\big(B_m(0)\big)}=\frac{\mathcal{L}^k\Big(\varphi^k_i(U^k_i)\cap B_{mr_j}\big(\varphi^k_i(\bar{x})\big)\Big)}{\mathcal{L}^k\Big(B_{mr_j}\big(\varphi^k_i(\bar{x})\big)\Big)}>1-\frac{1}{h}\mbox{,}$$
whence $\mathcal{L}^k\Big(B_m(0)\setminus\displaystyle{{\bigcup}_j}\big(\varphi^k_i(U^k_i)-\varphi^k_i(\bar{x})\big)/r_j\Big)=0$
for all $i,m\in\mathbb{N}$ and $\bar{x}\in D_k$. Therefore by Fubini's theorem we deduce
\[\begin{split}
(\mathfrak{m}\otimes\mathcal{L}^k)\Big(\big(A_k\times B_m(0)\big)\setminus W^k\Big)
=\,&\sum_{i\in\mathbb{N}}(\mathfrak{m}\otimes\mathcal{L}^k)\Big(\big(U^k_i\times B_m(0)\big)\setminus W^k\Big)\\
\leq\,&\sum_{i\in\mathbb{N}}\int_{D_k}\mathcal{L}^k\Big(B_m(0)\setminus{\bigcup}_j\big(\varphi^k_i(U^k_i)-\varphi^k_i(\bar{x})\big)/r_j\Big)\,\mbox{d}\mathfrak{m}(\bar{x})=0\mbox{,}
\end{split}\]
so that $(\mathfrak{m}\otimes\mathcal{L}^k)(N_k)=
\lim_m(\mathfrak{m}\otimes\mathcal{L}^k)\Big(\big(A_k\times B_m(0)\big)\setminus W^k\Big)=0$.
\end{proof}
We now endow $T_{\rm{GH}}\mbox{X}$ with a new $\sigma$-algebra $\mathcal{M}(\mathscr{A},(r_j))$, depending on the atlas $\mathscr{A}$ and the sequence $(r_j)$. Let  $\bar{\iota}_k:\,N^k\hookrightarrow T_{\rm{GH}}\mbox{X}$ be the inclusion maps, then define
\begin{equation}\label{f_M(A,r_j)}
\mathcal{M}\big(\mathscr{A},(r_j)\big):=\bigcap_{k\in\N}\bigg((\bar{\iota}_k)_*\mathscr{B}(N^k)\cap\bigcap_{i,j\in\mathbb{N}}(\iota_k\circ\widehat{\varphi}^k_{ij})_*\mathscr{B}(U^k_i\times U^k_i)\bigg)\mbox{.}
\end{equation}
Equivalently, a subset $E$ of $T_{\rm{GH}}\mbox{X}$ belongs to $\mathcal{M}(\mathscr{A},(r_j))$ if and only if $E\cap N^k\in\mathscr{B}(N^k)$ for every $k=1,\ldots,N$ and $(\widehat{\varphi}^k_{ij})^{-1}\big(E\cap(A_k\times\mathbb{R}^k)\big)\in\mathscr{B}(U^k_i\times U^k_i)$
for every $k,i,j$.

The fact that our choice of the  $\sigma$-algebra $\mathcal{M}_{\rm{GH}}(\rm{X})$ on $ T_{\rm{GH}}\rm{X}$ is canonical is encoded in the following proposition:
\begin{proposition}
Let $(\X,\sfd,\mm)$ be a strongly $\mm$-rectifiable metric measure space, $\mathscr A$ an $\eps$-atlas and $r_j\downarrow0$ a given sequence.

Then
\begin{equation}\label{f_M_GH_X=M(A,r_j)}
\mathcal{M}_{\rm{GH}}({\X})=\mathcal{M}\big(\mathscr{A},(r_j)\big)\mbox{.}
\end{equation}
\end{proposition}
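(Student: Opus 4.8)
The plan is to reduce the claimed identity of $\sigma$-algebras, slice by slice in $k\in\N$, to a routine statement about Borel sets. By the explicit descriptions of the two $\sigma$-algebras recorded right after \eqref{f_M_GH(X)} and right after \eqref{f_M(A,r_j)}, a subset $E\subseteq T_{\rm{GH}}\X$ lies in $\mathcal{M}_{\rm{GH}}(\X)$ iff each slice $E_k:=E\cap(A_k\times\R^k)$ is Borel in $A_k\times\R^k$, whereas it lies in $\mathcal{M}(\mathscr A,(r_j))$ iff each slice satisfies $E_k\cap N_k\in\mathscr B(N_k)$ together with $(\widehat{\varphi}^k_{ij})^{-1}(E_k)\in\mathscr B(U^k_i\times U^k_i)$ for all $i,j\in\N$ (here we use that $\widehat{\varphi}^k_{ij}$ takes values in $A_k\times\R^k$, so $(\widehat{\varphi}^k_{ij})^{-1}(E)=(\widehat{\varphi}^k_{ij})^{-1}(E_k)$). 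Hence it suffices to prove, for each fixed $k$, that a set $E_k\subseteq A_k\times\R^k$ is Borel if and only if it satisfies the latter two conditions.

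The inclusion $\mathcal{M}_{\rm{GH}}(\X)\subseteq\mathcal{M}(\mathscr A,(r_j))$ is the easy direction: if $E_k\in\mathscr B(A_k\times\R^k)$, then $E_k\cap N_k\in\mathscr B(N_k)$ because $N_k=(A_k\times\R^k)\setminus W^k$ is itself Borel in $A_k\times\R^k$ (as noted just after \eqref{f_W^k_ij_biLip}), and $(\widehat{\varphi}^k_{ij})^{-1}(E_k)\in\mathscr B(U^k_i\times U^k_i)$ because, by \eqref{f_W^k_ij_biLip}, each $\widehat{\varphi}^k_{ij}$ is biLipschitz, hence continuous, hence Borel measurable.

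For the converse inclusion, assume $E_k\cap N_k\in\mathscr B(N_k)$ and $(\widehat{\varphi}^k_{ij})^{-1}(E_k)\in\mathscr B(U^k_i\times U^k_i)$ for all $i,j$. Using $W^k=\bigcup_{i,j}W^k_{ij}$ and $A_k\times\R^k=N_k\sqcup W^k$, write
\[
E_k=(E_k\cap N_k)\cup\bigcup_{i,j\in\N}\big(E_k\cap W^k_{ij}\big),
\]
a countable union. The term $E_k\cap N_k$ is Borel in $A_k\times\R^k$, being a Borel subset of the Borel set $N_k$. For the remaining terms, the crucial observation is that by \eqref{f_W^k_ij_biLip} the map $\widehat{\varphi}^k_{ij}:U^k_i\times U^k_i\to W^k_{ij}$ is a biLipschitz bijection, so its inverse $(\widehat{\varphi}^k_{ij})^{-1}:W^k_{ij}\to U^k_i\times U^k_i$ is continuous; since
\[
E_k\cap W^k_{ij}=\widehat{\varphi}^k_{ij}\big((\widehat{\varphi}^k_{ij})^{-1}(E_k)\big)
\]
is precisely the preimage under this continuous map of the Borel set $(\widehat{\varphi}^k_{ij})^{-1}(E_k)$, it is Borel in $W^k_{ij}$, and therefore Borel in $A_k\times\R^k$ because $W^k_{ij}\in\mathscr B(A_k\times\R^k)$. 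Consequently $E_k$ is a countable union of Borel subsets of $A_k\times\R^k$, so $E_k\in\mathscr B(A_k\times\R^k)$, completing the reduction.

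The argument is entirely elementary, so there is no real obstacle; the only step deserving a little care is this last transport of Borel structure through the charts $\widehat{\varphi}^k_{ij}$, which hinges on the continuity of their inverses together with the already-established fact that the sets $W^k_{ij}$ (and hence $N_k$) are Borel inside the pieces $A_k\times\R^k$.
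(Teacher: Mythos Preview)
Your proof is correct and follows essentially the same approach as the paper's: both argue slice by slice in $k$, obtain the inclusion $\mathcal{M}_{\rm{GH}}(\X)\subseteq\mathcal{M}(\mathscr{A},(r_j))$ from the Borel measurability of the $\widehat{\varphi}^k_{ij}$, and obtain the reverse inclusion by writing $E_k=(E_k\cap N_k)\cup\bigcup_{i,j}(E_k\cap W^k_{ij})$ and using that the biLipschitz bijections $\widehat{\varphi}^k_{ij}$ carry Borel sets to Borel sets. You spell out a bit more explicitly than the paper why $\widehat{\varphi}^k_{ij}(F^k_{ij})$ is Borel (via continuity of the inverse), but the argument is otherwise identical.
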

\begin{proof}
If $E\in\mathcal{M}_{\rm{GH}}(\mbox{X})$ then $\iota^{-1}_k(E)\in\mathscr{B}(A_k\times\mathbb{R}^k)$ for every $k\in\N$, so accordingly $E\cap N^k$ belongs to $\mathscr{B}(N^k)$ and $(\widehat{\varphi}^k_{ij})^{-1}\big(\iota^{-1}_k(E)\big)$ belongs to $\mathscr{B}(U^k_i\times U^k_i)$ for every $k,i,j$, which proves that $E\in\mathcal{M}\big(\mathscr{A},(r_j)\big)$.

Conversely, let $E\in\mathcal{M}\big(\mathscr{A},(r_j)\big)$. Hence $E\cap N^k\in\mathscr{B}(N^k)\subseteq\mathscr{B}(A_k\times\mathbb{R}^k)$, while $F^k_{ij}:=(\widehat{\varphi}^k_{ij})^{-1}\big(\iota^{-1}_k(E)\big)\in\mathscr{B}(U^k_i\times U^k_i)$ implies
that $E\cap W^k_{ij}=\widehat{\varphi}^k_{ij}(F^k_{ij})\in\mathscr{B}(A_k\times\mathbb{R}^k)$.
Thus $\iota^{-1}_k(E)=(E\cap N^k)\cup\bigcup_{i,j}(E\cap W^k_{ij})\in\mathscr{B}(A_k\times\mathbb{R}^k)$ for every $k\in\N$, which is equivalent to saying that $E\in\mathcal{M}_{\rm{GH}}(\mbox{X})$. \end{proof}
\begin{remark}{\rm This last proposition does not use the strong $\mm$-rectifiability of the space but only the $\mm$-rectifiability, as seen by the fact that we didn't consider a sequence of $\eps_n$-atlases. We chose this presentation because the reason for the introduction of the Gromov-Hausdorff tangent module is in the statement contained in the next section, which grants that the space of its sections is isometric to the abstract tangent module $L^2(T\X)$, a result which we have only for strongly $\mm$-rectifiable spaces.
}\fr\end{remark}

\section{Equivalence of \texorpdfstring{$L^2(T\rm{X})$}{L^2(TX)} and \texorpdfstring{$L^2(T_{\rm{GH}}\rm{X})$}{L^2(T_GH_X)}}
\label{sect_equiv_L^2(TX)_and_L^2(T_GH_X)}
The main result of this article is the following: the two different notions of tangent modules described so far,
namely the ``analytic'' tangent module $L^2(T{\rm X})$ and the ``geometric'' Gromov-Hausdorff tangent module
$L^2(T_{\rm GH}{\rm X})$, can be actually identified. More precisely, given a strongly $\mathfrak{m}$-rectifiable
space that is also a PI space, there exists an isomorphism between $L^2(T{\rm X})$ and $L^2(T_{\rm GH}{\rm X})$ which preserves the pointwise norm and, as the construction, such isomorphism can be canonically chosen once an aligned sequence of atlases is given.

Notice that  Theorem \ref{thm_dim_decompos_of_L^2(TX)} (which is valid on more general $\mm$-rectifiable spaces) is equivalent to the fact that there exists a morphism of $L^2(T{\rm X})$ into $L^2(T_{\rm GH}{\rm X})$ with continuous inverse, thus in particular changing the pointwise norm of a bounded factor. Thus Theorem \ref{thm_L^2(TX)=L^2(T_GH_X)} below can be seen as the improvement of Theorem \ref{thm_dim_decompos_of_L^2(TX)} which shows that for strongly $\mm$-rectifiable spaces such factor can be taken to be 1.

\begin{theorem}[Equivalence of $L^2(T{\rm X})$ and $L^2(T_{\rm GH}{\rm X})$]\label{thm_L^2(TX)=L^2(T_GH_X)}
Let $({\rm X},\mathsf{d},\mathfrak{m})$ be a  strongly $\mathfrak{m}$-rectifiable and PI space. Then there exists an isometric isomorphism of modules
$\mathscr{I}:\,L^2(T{\rm X})\to L^2(T_{\rm GH}{\rm X})$, so that in particular it holds
\begin{equation}\label{f_L^2(TX)=L^2(T_GH_X)_1}
\big|\mathscr{I}(\mathsf{v})\big|=|\mathsf{v}|\;\;\mathfrak{m}\mbox{-a.e. in }\emph{X}\mbox{, }\quad\mbox{for every }\mathsf{v}\in L^2(T\emph{X})\mbox{.}
\end{equation}
\end{theorem}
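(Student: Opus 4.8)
The plan is to realize $\mathscr{I}$ as a limit of ``approximate isometries'' indexed by an aligned family of atlases. Using Theorem \ref{thm_existence_aligned_atlases} I fix an aligned family $(\mathscr{A}_n)_n$, $\mathscr{A}_n=\{(U^{k,n}_i,\varphi^{k,n}_i)\}_{k,i}$, of parameters $\eps_n\downarrow0$ and $\delta_n\downarrow0$, where I additionally require $\sum_n\delta_n<\infty$ --- legitimate, since the sequence $\delta_n$ may be prescribed freely. Each $\varphi^{k,n}_i$ is invertible, of bounded deformation, with inverse of bounded deformation, so Proposition \ref{prop_differential_bdd_deform_map} gives its differential $\d\varphi^{k,n}_i\colon L^2(T\X)\restr{U^{k,n}_i}\to L^2(T\R^k)\restr{\varphi^{k,n}_i(U^{k,n}_i)}$, with $(\d\varphi^{k,n}_i)^{-1}=\d\big((\varphi^{k,n}_i)^{-1}\big)$ by Corollary \ref{cor_functoriality_differential}. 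Under the canonical identification $L^2(T\R^k)\restr{\varphi^{k,n}_i(U^{k,n}_i)}\cong L^2\big(\varphi^{k,n}_i(U^{k,n}_i),\R^k\big)$ I define $\mathscr{I}_n\colon L^2(T\X)\to L^2(T_{\rm GH}\X)$ by
\[
\mathscr{I}_n(\mathsf v)(x):=\big(\d\varphi^{k,n}_i(\nchi_{U^{k,n}_i}\mathsf v)\big)\big(\varphi^{k,n}_i(x)\big)\in\R^k\qquad\text{for $\mm$-a.e.\ }x\in U^{k,n}_i,
\]
which, since $(U^{k,n}_i)_{k,i}$ is an $\mm$-partition of $\X$, defines a measurable section of $T_{\rm GH}\X$. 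From \eqref{f_differential_bdd_deform_map_2} applied to $\varphi^{k,n}_i$ and to $(\varphi^{k,n}_i)^{-1}$ one gets $\frac1{1+\eps_n}|\mathsf v|\le|\mathscr{I}_n(\mathsf v)|\le(1+\eps_n)|\mathsf v|$ $\mm$-a.e.; moreover $\mathscr{I}_n$ is linear and $\mathscr{I}_n(h\mathsf v)=h\,\mathscr{I}_n(\mathsf v)$ for $h\in L^\infty(\mm)$ (because $\d\varphi(h\mathsf v)=(h\circ\varphi^{-1})\d\varphi(\mathsf v)$, a consequence of \eqref{f_differential_bdd_deform_map_1}), so $\mathscr{I}_n$ is a module morphism. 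The analogous map $\mathscr{J}_n$ built from the differentials $\d\big((\varphi^{k,n}_i)^{-1}\big)$ is, by Corollary \ref{cor_functoriality_differential}, a two-sided inverse of $\mathscr{I}_n$, and $\|\mathscr{I}_n\|,\|\mathscr{J}_n\|\le1+\eps_0$.

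The key step is the bound $\|\mathscr{I}_n-\mathscr{I}_{n-1}\|_{\mathrm{op}}\le\delta_n(1+\eps_n)$. Fix $U^{k,n}_i\subseteq U^{k,n-1}_j$ (this pairing exists by the refinement condition (ii) of Definition \ref{def_aligned_atlases}) and set $\tau:=\varphi^{k,n-1}_j\circ(\varphi^{k,n}_i)^{-1}$, so that $\varphi^{k,n-1}_j\restr{U^{k,n}_i}=\tau\circ\varphi^{k,n}_i$ on $U^{k,n}_i$. Combining the chain rule for the abstract differential (Corollary \ref{cor_functoriality_differential}), the locality property \eqref{eq:locdiff}, and the identification \eqref{f_differentials_in_R^k} of the abstract differential with the classical one in $\R^k$, one checks that
\[
\mathscr{I}_{n-1}(\mathsf v)(x)=\d\tau\big(\varphi^{k,n}_i(x)\big)\big(\mathscr{I}_n(\mathsf v)(x)\big)\qquad\text{for $\mm$-a.e.\ }x\in U^{k,n}_i,
\]
$\d\tau(y)$ being the classical Jacobian of $\tau$ at $y$. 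By \eqref{f_aligned_atlases} we have $\|\d\tau(y)-\mathrm{Id}_{\R^k}\|\le\delta_n$ for $\mathcal L^k$-a.e.\ $y$, and $|\mathscr{I}_n(\mathsf v)|\le(1+\eps_n)|\mathsf v|$, whence $|\mathscr{I}_{n-1}(\mathsf v)-\mathscr{I}_n(\mathsf v)|\le\delta_n(1+\eps_n)|\mathsf v|$ on $U^{k,n}_i$; squaring and summing over $k,i$ gives the claim.

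To conclude: since $\sum_n\delta_n<\infty$, $(\mathscr{I}_n)_n$ is Cauchy in operator norm, so it converges to a bounded operator $\mathscr{I}\colon L^2(T\X)\to L^2(T_{\rm GH}\X)$, which is again a module morphism (module morphisms form a closed subspace of bounded operators). Letting $n\to\infty$ in $\frac1{1+\eps_n}|\mathsf v|\le|\mathscr{I}_n(\mathsf v)|\le(1+\eps_n)|\mathsf v|$, along a subsequence along which $\mathscr{I}_n(\mathsf v)\to\mathscr{I}(\mathsf v)$ $\mm$-a.e., yields $|\mathscr{I}(\mathsf v)|=|\mathsf v|$ $\mm$-a.e., i.e.\ \eqref{f_L^2(TX)=L^2(T_GH_X)_1}; in particular $\mathscr{I}$ is an isometry. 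From $\mathscr{J}_n-\mathscr{J}_m=\mathscr{J}_n(\mathscr{I}_m-\mathscr{I}_n)\mathscr{J}_m$ and $\|\mathscr{J}_n\|\le1+\eps_0$ the sequence $(\mathscr{J}_n)_n$ is Cauchy too; its limit $\mathscr{J}$ satisfies $\mathscr{I}\circ\mathscr{J}=\mathscr{J}\circ\mathscr{I}=\mathrm{id}$, obtained by passing to the limit in $\mathscr{I}_n\mathscr{J}_n=\mathscr{J}_n\mathscr{I}_n=\mathrm{id}$ with the help of the uniform bounds. Hence $\mathscr{I}$ is the required isometric isomorphism of modules. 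I expect the one genuinely delicate point to be the overlap identity $\mathscr{I}_{n-1}(\mathsf v)=\d\tau\big(\varphi^{k,n}_i(\cdot)\big)\mathscr{I}_n(\mathsf v)$: it requires the restriction/locality property, the chain rule for the abstract differential, and the abstract-versus-classical comparison in $\R^k$ to be combined correctly before the alignment estimate \eqref{f_aligned_atlases} can be applied; everything else --- the limiting procedure and the transfer of linearity, $L^\infty$-bilinearity and invertibility to the limit --- is routine.
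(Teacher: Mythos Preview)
Your proof is correct and follows essentially the same strategy as the paper's: build $\mathscr{I}_n$ from the chart differentials, use the alignment condition \eqref{f_aligned_atlases} together with the chain rule, locality \eqref{eq:locdiff}, and the identification \eqref{f_differentials_in_R^k} to show $(\mathscr{I}_n)_n$ is Cauchy in operator norm, and pass to the limit. The only cosmetic differences are that the paper fixes $\eps_n=\delta_n=2^{-n}$ rather than a generic summable sequence, obtains the lower bound on $|\mathscr I(\mathsf v)|$ a posteriori from $\mathscr J$ rather than from the two-sided estimate on $|\mathscr I_n(\mathsf v)|$, and reproves the Cauchy property for $(\mathscr J_n)_n$ by repeating the argument, whereas your algebraic identity $\mathscr J_n-\mathscr J_m=\mathscr J_n(\mathscr I_m-\mathscr I_n)\mathscr J_m$ is a neat shortcut.
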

\begin{proof}
Consider an aligned family $(\mathscr{A}_n)_n$ of atlases $\mathscr{A}_n={\big\{(U^{k,n}_i,\varphi^{k,n}_i)\big\}}_{k,i}$ on $(\mbox{X},\mathsf{d},\mathfrak{m})$, of parameters $\varepsilon_n:=1/2^n$ and $\delta_n:=1/2^{n}$, whose existence is guaranteed by Theorem \ref{thm_existence_aligned_atlases}. Now let $\mathsf{v}\in L^2(T{\rm X})$ and $n\in\mathbb{N}$ be fixed. For $k,i\in\N$ put $V^{k,n}_i:=\varphi^{k,n}_i\big(U^{k,n}_i\big)\in\mathscr{B}(\mathbb{R}^k)$ and recall  that $\varphi^{k,n}_i:\,U^{k,n}_i\to V^{k,n}_i$ and its inverse are maps of bounded deformation. Thus it makes sense to consider $\mbox{d}\varphi^{k,n}_i\big(\nchi_{U^{k,n}_i}\mathsf{v}\big)\in L^{2}\big(V^{k,n}_i,\mathbb{R}^k\big)\cong{L^2(T\mathbb{R}^k)}\restr{V^{k,n}_i}$ and we can  define
$$\mathsf{w}^{k,n}_i(x):=
\left\{\begin{array}{ll}
\big(\mbox{d}\varphi^{k,n}_i\big(\nchi_{U^{k,n}_i}\mathsf{v}\big)\big)\big(\varphi^{k,n}_i(x)\big)\\
0
\end{array}\quad\begin{array}{ll}
\mbox{ for }\mathfrak{m}\mbox{-a.e. }x\in U^{k,n}_i\mbox{,}\\
\mbox{ for }\mathfrak{m}\mbox{-a.e. }x\in\mbox{X}\setminus U^{k,n}_i\mbox{.}
\end{array}\right.$$
The bound \eqref{f_differential_bdd_deform_map_2} gives
\begin{equation}\label{f_L^2(TX)=L^2(T_GH_X)_2}
\big|\mathsf{w}^{k,n}_i\big|(x)\leq\mbox{Lip}(\varphi^{k,n}_i)\,|\mathsf{v}|(x)\quad\mbox{ for }\mathfrak{m}\mbox{-a.e. }x\in U^{k,n}_i\mbox{,}
\end{equation}
so that ${\big\|\mathsf{w}^{k,n}_i\big\|}_{L^2(T_{\rm GH}{\rm X})}\leq(1+2^{-n}){\big\||\mathsf{v}|\big\|}_{L^2(U^{k,n}_i)}$.
In particular, the series $\sum_{i,k}\mathsf{w}^{k,n}_i$ converges in $ L^{2}(T_{\rm GH}{\rm X})$ to some vector field $\mathscr{I}_n(\mathsf{v})$ whose norm is bounded by $\left(1+\frac{1}{2^n}\right){\big\||\mathsf{v}|\big\|}_{L^2({\rm X})}$ and which satisfies
\begin{equation}\label{f_L^2(TX)=L^2(T_GH_X)_3}
\nchi_{U^{k,n}_i}\mathscr{I}_n(\mathsf{v})=\mathsf{w}^{k,n}_i\quad\mbox{ for every }k,i\in\mathbb{N}\mbox{.}
\end{equation}
It is then clear that $\mathscr I_n:L^2(T\X)\to L^2(T_{\rm GH}{\rm X})$ is $L^\infty$-linear, continuous and satisfying  $\big|\mathscr{I}_n(\mathsf{v})\big|\leq\big(1+2^{-n}\big)\,|\mathsf{v}|$  $\mathfrak{m}$-a.e.\ for every ${\sf v}\in L^2(T\X)$.

We now claim that
\begin{equation}\label{f_L^2(TX)=L^2(T_GH_X)_4}
\mbox{the sequence }{(\mathscr{I}_n)}_n\mbox{ is Cauchy w.r.t.\ the operator norm.}
\end{equation}
To prove this, let ${\sf v}\in L^2(T\X)$,  $k,i,j\in\mathbb{N}$ with $U^{k,n+1}_i\subseteq U^{k,n}_j$. For $\mathfrak{m}$-a.e. point $x\in U^{k,n+1}_i$, putting for brevity $y:=\varphi^{k,n+1}_i(x)$, it holds that
\[\begin{split}
\big|\mathscr{I}_{n+1}(\mathsf{v})-\mathscr{I}_n(\mathsf{v})\big|(x)=\,&\bigg|\Big(\mbox{d}\varphi^{k,n+1}_i\big(\nchi_{U^{k,n+1}_i}\mathsf{v}\big)\Big)\big(\varphi^{k,n+1}_i(x)\big)-
\Big(\mbox{d}\varphi^{k,n}_j\big(\nchi_{U^{k,n+1}_i}\mathsf{v}\big)\Big)\big(\varphi^{k,n}_j(x)\big)\bigg|\\
(\eqref{eq:locdiff},\eqref{f_functoriality_differential},\eqref{f_differentials_in_R^k})\quad\leq\,&\bigg\|\,{\mbox{d}}\Big(\mbox{id}_{V^{k,n+1}_i}-\varphi^{k,n}_j\circ\big(\varphi^{k,n+1}_i\big)^{-1}\Big)(y)
\bigg\|\,\Big|\mbox{d}\varphi^{k,n+1}_i\big(\nchi_{U^{k,n+1}_i}\mathsf{v}\big)\Big|(y)\\
(\delta_{n+1}=2^{-n-1})\quad\leq\,&\frac{1}{2^{n+1}}\,\Big|\mbox{d}\varphi^{k,n+1}_i\big(\nchi_{U^{k,n+1}_i}\mathsf{v}\big)\Big|\big(\varphi^{k,n+1}_i(x)\big)\\
(\eps_{n+1}=2^{-n-1})\quad\leq\,&\frac{1}{2^{n+1}}\,\left(1+\frac{1}{2^{n+1}}\right)\,|\mathsf{v}|(x)\leq\frac{1}{2^n}\,|\mathsf{v}|(x)\mbox{.}
\end{split}\]
It follows that ${\big\|\mathscr{I}_{n+1}(\mathsf{v})-\mathscr{I}_n(\mathsf{v})\big\|}_{L^2(T_{\rm GH}{\rm X})}\leq\frac{1}{2^n}\,{\|\mathsf{v}\|}_{L^2(T{\rm X})}\mbox{}$ which by the arbitrariness of ${\sf v}$ means that
\[
\big\|\mathscr{I}_{n+1}-\mathscr{I}_n\big\|\leq\frac{1}{2^n},
\]
where the norm in the left hand side is the operator one. Hence $\sum_{n=0}^\infty\big\|\mathscr{I}_{n+1}-\mathscr{I}_n\big\|<+\infty$ and the   claim \eqref{f_L^2(TX)=L^2(T_GH_X)_4} is proved.

Let $\mathscr I:L^2(T\X)\to L^2(T_{\rm GH}\X)$ be the limit of  $(\mathscr I_n)$ and notice that being the limit of $L^\infty$-linear maps, it is also $L^\infty$-linear. Moreover, the fact that $\mathscr{I}_n(\mathsf{v})\to\mathscr{I}(\mathsf{v})$ in $L^2(T_{\rm GH}{\rm X})$ implies that $\big|\mathscr{I}_n(\mathsf{v})\big|\to\big|\mathscr{I}(\mathsf{v})\big|$ in $L^2({\rm X})$, hence - up to subsequences -  we have
\begin{equation}
\label{eq:normi}
\big|\mathscr{I}(\mathsf{v})\big|(x)=\lim_{n\to\infty}\big|\mathscr{I}_{n}(\mathsf{v})\big|(x)\leq\lim_{n\to\infty}\left(1+\frac{1}{2^{n}}\right)|\mathsf{v}|(x)=|\mathsf{v}|(x)\quad\mbox{ for }\mathfrak{m}\mbox{-a.e. }x\in\mbox{X}\mbox{.}
\end{equation}
In order to prove that $\mathscr{I}$ is actually an isometric isomorphism that preserves the pointwise norm, we explicitly exhibit its inverse functional $\mathscr{J}$. In analogy with the construction just done, for any $\mathsf{w}\in L^2(T_{\rm GH}{\rm X})$ and $n\in\mathbb{N}$ one can build a unique $\mathscr{J}_n(\mathsf{w})\in L^2(T{\rm X})$ such that
\begin{equation}\label{f_L^2(TX)=L^2(T_GH_X)_5}
\nchi_{U^{k,n}_i}\mathscr{J}_n(\mathsf{w})=\mbox{d}\big(\varphi^{k,n}_i\big)^{-1}\Big(\mathsf{w}\circ\big(\varphi^{k,n}_i\big)^{-1}\Big)\quad\mbox{ for every }k,i\in\mathbb{N}\mbox{.}
\end{equation}
By means of the same arguments used above, we can prove that $\mathscr J_n:L^2(T_{\rm GH}\X)\to L^2(T\X)$ is $L^\infty$-linear and continuous and, as $n\to\infty$, converges to a limit functional $\mathscr J:L^2(T_{\rm GH}\X)\to L^2(T\X)$ in the operator norm which is $L^\infty$-linear and satisfies
\begin{equation}
\label{eq:normj}
\big|\mathscr{J}(\mathsf{w})\big|\leq|\mathsf{w}|\quad \mathfrak{m}-a.e.\qquad\forall \mathsf{w}\in L^2(T_{\rm GH}{\rm X}).
\end{equation}
Our aim is now to prove that $\mathscr{I}$, $\mathscr{J}$ are one the inverse of the other. 

Let $\mathsf{v}\in L^2(T\mbox{X})$ and $n\in\mathbb{N}$ be fixed. For any $k,i\in\mathbb{N}$, we have that \eqref{f_L^2(TX)=L^2(T_GH_X)_3} and \eqref{f_L^2(TX)=L^2(T_GH_X)_5} give
\[\begin{split}
\nchi_{U^{k,n}_i}\mathscr{J}_n\big(\mathscr{I}_n(\mathsf{v})\big)&=
\nchi_{U^{k,n}_i}\mathscr{J}_n\big(\nchi_{U^{k,n}_i}\mathscr{I}_n(\mathsf{v})\big)=\nchi_{U^{k,n}_i}
\mathscr{J}_n\Big(\mbox{d}\varphi^{k,n}_i\big(\nchi_{U^{k,n}_i}\mathsf{v}\big)\circ\varphi^{k,n}_i\Big)\\
&=\mbox{d}\big(\varphi^{k,n}_i\big)^{-1}\Big(\mbox{d}\varphi^{k,n}_i\big(\nchi_{U^{k,n}_i}\mathsf{v}\big)\Big)=
\nchi_{U^{k,n}_i}\mathsf{v}\mbox{,}
\end{split}\]
therefore $\mathscr{J}_n\circ\mathscr{I}_n=\mbox{id}_{L^2(T{\rm X})}$. In an analogous way, also $\mathscr{I}_n\circ\mathscr{J}_n=\mbox{id}_{L^2(T_{\rm GH}{\rm X})}$. Thus for every  $n\in\mathbb{N}$ we have
\[\begin{split}
{\big\|\mathscr{J}\circ \mathscr{I}-{\rm id}_{L^2(T\X)}\big\|}=&{\big\|\mathscr{J}\circ \mathscr{I}-\mathscr{J}_n\circ \mathscr{I}_n\big\|}\\
\leq\,&{\big\|\mathscr{J}\circ\big(\mathscr{I}-\mathscr{I}_n\big)\big\|}+
{\big\|(\mathscr{J}-\mathscr{J}_n\big)\circ \mathscr{I}_n\big\|}\\
\leq\,&\|\mathscr{J}\|\, \big\|\mathscr{I}-\mathscr{I}_n\big\|+
{\big\|\mathscr{J}-\mathscr{J}_n\big\|}\,\sup_n \|\mathscr{I}_n\|,
\end{split}\]
so that letting $n\to\infty$ we see that $\mathscr{J}\circ \mathscr{I}={\rm id}_{L^2(T\X)}$. A symmetric argument yields $\mathscr{I}\circ\mathscr{J}=\mbox{id}_{L^2(T_{\rm GH}{\rm X})}$.

To conclude, notice that for every ${\sf v}\in L^2(T\X)$ we have
$$|\mathsf{v}|=\big|\mathscr{J}\big(\mathscr{I}(\mathsf{v})\big)\big|\stackrel{\eqref{eq:normj}}\leq\big|\mathscr{I}(\mathsf{v})\big|\stackrel{\eqref{eq:normi}}\leq|\mathsf{v}|\quad\mathfrak{m}\mbox{-a.e. in }\mbox{X}.$$
Hence the inequalities are equalities, yielding  \eqref{f_L^2(TX)=L^2(T_GH_X)_1} and the conclusion.
\end{proof}
\section{Geometric interpretation of \texorpdfstring{$T_{\rm{GH}}\rm{X}$}{T_GH_X}}
\label{sect_Geometric_Interpretation_of_T_GH_X}
The aim of this conclusive section is to discuss in which sense for strongly $\mm$-rectifiable spaces  the space $T_{\rm GH}\rm{X}$ can be obtained by looking at the pointed Gromov-Hausdorff limits of the rescalings of $\rm{X}$ around (almost) all of its points.

For comparison purposes, let us recall the definition of pointed  Gromov-Hausdorff convergence (see e.g.\ \cite{BBI01}):
\begin{definition}[Pointed Gromov-Hausdorff convergence]\label{def_pGH_convergence}
Let $({\rm X}_m,\mathsf{d}_m,\bar{x}_m)$, for any $m\in\mathbb{N}\cup\{\infty\}$, be a pointed metric space.
Then $(\emph{X}_m,\mathsf{d}_m,\bar{x}_m)$ is said to converge as $m\to\infty$ to $(\emph{X}_\infty,\mathsf{d}_\infty,\bar{x}_\infty)$ \emph{in the pointed Gromov-Hausdorff sense} provided the following property is satisfied: for every $0<\varepsilon<R$, there exist $\bar{m}\in\mathbb{N}$ and Borel maps $f_m:\,B_R(\bar{x}_m)\to\emph{X}_\infty$ for any $m\geq\bar{m}$ such that
\begin{itemize}
\item[\rm(i)] $f_m(\bar{x}_m)=\bar{x}_\infty$ for every $m\geq\bar{m}$,
\item[\rm(ii)] $f_m$ is an \emph{$\varepsilon$-quasi isometry} with its image for every $m\geq\bar{m}$, i.e.
\begin{equation}\label{f_quasi_isometry}
\begin{split}
&\Big|\mathsf{d}_{\infty}\big(f_m(x),f_m(x')\big)-\mathsf{d}_{m}(x,x')\Big|\leq\varepsilon\quad\mbox{ for every }x,x'\in B_R(\bar{x}_m)\mbox{,}\\
&B_{R-\varepsilon}(\bar{x}_\infty)\subseteq \eps-\text{neighbourhood of }{f_m\big(B_R(\bar{x}_m)\big)}\mbox{.}
\end{split}
\end{equation}
\end{itemize}
\end{definition}
Let now $(\X,\sfd,\mm)$ be a strongly $\mm$-rectifiable space and $\mathscr A_n=\{(U^{k,n}_i,\varphi^{k,n}_i)\}_{i,k}$ be a family of $\eps_n$-atlases,
with compact domains $U^{k,n}_i$. We can use the atlases to build Borel maps $\Psi_n:\X\times(\frac1{r_n}\X)\to T_{\rm GH}\X$ which are `bundle maps', i.e.\ which fix the first coordinate, and that are approximate isometries as maps on the second variable in the following way. We first recall that for any $U\subset \X$ closed there exists a Borel map $P_U:\X\to U$ such that 
\[
\sfd(x,P_U(x))\leq 2 \sfd(x,U)\qquad\forall x\in \X.
\]
This can be built by first considering a countable dense subset $\{x_n\}_n$ of $U$ and then
by declaring $P_U(x):=x$ for $x\in U$ and for $x\notin U$ defining
\[
P_U(x):=\{x_n\ :\ n\text{ is the least $m$ such that }\sfd(x,x_m)\leq 2\sfd(x,U)\}.
\]
Then given a sequence $r_n\downarrow 0$ we put
\begin{equation}
\label{eq:phin}
\Phi_n(x,y):=\frac{\varphi^{k,n}_i(P_{U^{k,n}_i}(y))-\varphi^{k,n}_i(x)}{r_n}\in\R^k\qquad\text{ for }x\in U^{k,n}_i,\ y\in \X
\end{equation}
and $\Phi_n(x,y):=0_{\R^k}$ if $x\in A_k\setminus\cup_{i}U^{k,n}_i$, where $(A_k)$ is the dimensional decomposition of $\X$. Finally we define
\[
\Psi_n(x,y):=\big(x,\Phi_n(x,y)\big)\qquad \forall x,y\in\X.
\]
Notice that $\Psi_n$ is Borel for every $n\in\N$. In the next theorem we show that for $\mm$-a.e.\ $x\in\X$ the maps $y\mapsto \Phi_n(x,y)$ provide approximate isometries of  $\X$  rescaled by a factor $\frac1{r_n}$ and $\R^k$, thus showing not only that the tangent space of $\X$ at $x$ is $\R^k$, but also that there is a `compatible' choice of approximate isometries making the resulting global maps, i.e.\  $\Psi_n$, Borel.
\begin{theorem}
Let $(\X,\sfd,\mm)$ be a strongly $\mm$-rectifiable space, $\eps_n\downarrow0$ and $\mathscr A_n=\{(U^{k,n}_i,\varphi^{k,n}_i)\}_{i,k}$ be a family of $\eps_n$-atlases
with compact domains $U^{k,n}_i$ and $(A_k)$ the dimensional decomposition of $\X$.

Then there exists a sequence $r_n\downarrow0$ such that, defining $\Phi_n$ as in \eqref{eq:phin}, for $\mm$-a.e.\ $x\in\X$ the following holds: for every $R>\eps>0$ there is $n_0\in\N$ so that for every $n\geq n_0$ we have
\begin{equation}
\label{eq:converg}
\begin{split}
\Big|\big|\Phi_n(x,y_0)-\Phi_n(x,y_1)|_{\R^k}-\frac{\sfd(y_0,y_1)}{r_n}\Big|\leq \eps\qquad \forall y_0,y_1\in B_{r_nR}(x),\\
B_{R-\eps}(0_{\R^k})\subset \eps\text{-neighbourhood of }\{\Phi_n(x,y)\ :\ y\in B_{r_nR}(x)\},
\end{split}
\end{equation}
where $k$ is such that $x\in A_k$.
\end{theorem}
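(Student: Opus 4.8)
The plan is to fix $k\in\N$ and prove the statement for $\mm$-a.e.\ $x\in A_k$; since the dimensional decomposition is countable, taking the union of the exceptional sets then gives the full claim. For each $n$ and $\mm$-a.e.\ $x\in A_k$ let $i(x,n)$ be the (a.e.\ unique) index with $x\in U^{k,n}_{i(x,n)}$, and abbreviate $\varphi^{x,n}:=\varphi^{k,n}_{i(x,n)}$, $U^{x,n}:=U^{k,n}_{i(x,n)}$, $V^{x,n}:=\varphi^{x,n}(U^{x,n})\subset\R^k$. I would first isolate a full-measure set $G_k\subseteq A_k$ on which, for \emph{every} $n$: (a) $x\in U^{x,n}$ and $x$ is a point of density $1$ for $U^{x,n}$ (by \eqref{f_a.e._pt_1-density}); (b) $\varphi^{x,n}(x)$ is a point of density $1$ for $V^{x,n}$ in $(\R^k,\mathcal L^k)$ --- this uses \eqref{eq:miscarte}, which makes $(\varphi^{x,n})_*(\mm\restr{U^{x,n}})$ comparable to $\mathcal L^k\restr{V^{x,n}}$, together with Lebesgue density \eqref{f_a.e._pt_1-density} in $\R^k$ (this is the observation behind the set $D_k$ in the proof of the Lemma in Section~\ref{subsect_GH_Tangent_Bundle}).

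The heart of the matter is choosing the single sequence $r_n\downarrow0$ so that it works for $\mm$-a.e.\ $x$ at once. For fixed $n$, Lemma~\ref{lemma_pts_1-density} applied with $\eta:=1/n$ --- in $\X$ to the set $U^{x,n}$, and in $\R^k$ to $V^{x,n}$ at the density point $\varphi^{x,n}(x)$ --- provides, for each $x\in G_k$, a threshold radius $\sigma_n(x)>0$ such that for all $z\in B_{\sigma_n(x)}(x)$ one has $\sfd(z,U^{x,n})\le\frac1n\sfd(z,x)$, and for all $w\in B_{\sigma_n(x)}(\varphi^{x,n}(x))$ one has $\mathrm{dist}(w,V^{x,n})\le\frac1n|w-\varphi^{x,n}(x)|$. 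The level sets $\{x\in G_k:\sigma_n(x)\ge\rho\}$ are measurable and increase to $G_k$ up to a null set as $\rho\downarrow0$, so I can pick $\rho_n^{(k)}>0$ with $\mm\big(G_k\setminus\{\sigma_n\ge\rho_n^{(k)}\}\big)<2^{-n-k}$. By Borel--Cantelli, $\mm$-a.e.\ $x\in A_k$ has $\sigma_n(x)\ge\rho_n^{(k)}$ for all large $n$. I then set
\[
r_n:=\frac1n\,\min_{k\le n}\rho_n^{(k)},
\]
which is positive, decreasing to $0$, and satisfies $r_nR<\rho_n^{(k)}$ as soon as $n>\max\{k,R\}$.

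With these choices, fix a good $x\in A_k$ and $R>\eps>0$, and take $n_0$ so large that $n_0>\max\{k,R\}$, that $\sigma_n(x)\ge\rho_n^{(k)}$ for $n\ge n_0$, and that $\eps_n$ and $1/n$ are small enough (in the quantitative sense below) for $n\ge n_0$. For $n\ge n_0$ we have $B_{r_nR}(x)\subseteq B_{\rho_n^{(k)}}(x)$, so both threshold estimates apply. \emph{First estimate:} for $y_0,y_1\in B_{r_nR}(x)$ put $\tilde y_j:=P_{U^{x,n}}(y_j)$; then $\sfd(y_j,\tilde y_j)\le 2\sfd(y_j,U^{x,n})\le\frac2n r_nR$, hence $\big|\sfd(\tilde y_0,\tilde y_1)-\sfd(y_0,y_1)\big|\le\frac4n r_nR$, while $(1+\eps_n)$-biLipschitzness of $\varphi^{x,n}$ gives $\big||\varphi^{x,n}(\tilde y_0)-\varphi^{x,n}(\tilde y_1)|-\sfd(\tilde y_0,\tilde y_1)\big|\le\eps_n\sfd(\tilde y_0,\tilde y_1)$; since $r_n|\Phi_n(x,y_0)-\Phi_n(x,y_1)|=|\varphi^{x,n}(\tilde y_0)-\varphi^{x,n}(\tilde y_1)|$, combining these yields
\[
\Big||\Phi_n(x,y_0)-\Phi_n(x,y_1)|-\tfrac{\sfd(y_0,y_1)}{r_n}\Big|\le(3\eps_n+\tfrac4n)\,R,
\]
which is $\le\eps$ for $n\ge n_0$. \emph{Second estimate:} given $v\in\R^k$ with $|v|\le R-\eps$, set $w:=\varphi^{x,n}(x)+r_nv$, so $|w-\varphi^{x,n}(x)|=r_n|v|<r_nR\le\rho_n^{(k)}$; by the $\R^k$-threshold and compactness of $V^{x,n}$ (the domains are compact and $\varphi^{x,n}$ continuous) there is $w'\in V^{x,n}$ with $|w-w'|\le\frac1n r_nR$. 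Then $y:=(\varphi^{x,n})^{-1}(w')\in U^{x,n}$ has $P_{U^{x,n}}(y)=y$, so $\Phi_n(x,y)=\frac{w'-\varphi^{x,n}(x)}{r_n}$ and $|\Phi_n(x,y)-v|=\frac{|w'-w|}{r_n}\le\frac Rn<\eps$; moreover $\sfd(y,x)\le(1+\eps_n)|w'-\varphi^{x,n}(x)|\le(1+\eps_n)r_n(R-\eps+\tfrac Rn)<r_nR$ for $n\ge n_0$, using precisely the slack $\eps>0$ and $\eps_n,1/n\to0$. Hence $v$ lies in the $\eps$-neighbourhood of $\{\Phi_n(x,y):y\in B_{r_nR}(x)\}$, which is the second assertion of \eqref{eq:converg}.

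The main obstacle is the one handled in the second paragraph: Lemma~\ref{lemma_pts_1-density} only yields a density-effective radius depending on both the point and the chart $U^{k,n}_i$ containing it, so one cannot fix $r_n$ in advance and beat all of them pointwise. The resolution is that for \emph{fixed} $n$ the bad set $\{x:\sigma_n(x)<\rho\}$ shrinks to a null set as $\rho\downarrow0$, which makes a summable choice of $\rho_n^{(k)}$ possible and lets Borel--Cantelli upgrade pointwise to uniform control; after that, taking $r_n$ far below the $\rho_n^{(k)}$'s is harmless. The remaining items --- measurability of $\sigma_n$ and of the level sets $\{\sigma_n\ge\rho\}$, the precise bookkeeping of the constants, and the degenerate case $y_0=x$ (where everything is consistent, since $P_{U^{x,n}}(x)=x$) --- are routine and I would not dwell on them.
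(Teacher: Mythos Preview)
Your proof is correct and in fact more careful than the paper's on the key quantifier issue you single out. The paper's argument fixes $\bar x\in D$ and $R>\eps>0$ \emph{first}, and only then invokes Lemma~\ref{lemma_pts_1-density} to produce the sequence $r_n$; as written, this makes $r_n$ depend on $\bar x$ (and on $R,\eps$), which does not match the assertion that a \emph{single} sequence $r_n$ works for $\mm$-a.e.\ $x$ and all $R>\eps$. Your Borel--Cantelli device --- taking the density threshold $\sigma_n(x)$ with accuracy $1/n$, cutting the bad sets $\{\sigma_n<\rho_n^{(k)}\}$ to summable measure, and choosing $r_n$ far below the resulting $\rho_n^{(k)}$ --- is exactly the right repair and restores the correct quantifier order $\exists(r_n)\,\forall^{\text{a.e.}}x\,\forall R,\eps\,\exists n_0$. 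Once $r_n$ is in hand, the two estimates in \eqref{eq:converg} are verified in both proofs by the same biLipschitz bookkeeping; your constants $(3\eps_n+\tfrac4n)R$ and $(1+\eps_n)(R-\eps+\tfrac Rn)$ are correct.

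Two small points worth tightening. First, the Borel--Cantelli step requires $\mm\restr{A_k}$ to be $\sigma$-finite; it is, since the compact $U^{k,0}_i$ cover $A_k$ up to a null set and each has finite mass, so either run the argument against an equivalent probability measure on $A_k$ or intersect with a finite-measure exhaustion. Second, your formula $r_n=\tfrac1n\min_{k\le n}\rho_n^{(k)}$ need not be monotone; replace it by $\tilde r_n:=\min_{m\le n}r_m$ to secure $r_n\downarrow0$. Finally, both your argument and the paper's apply Lemma~\ref{lemma_pts_1-density} \emph{in $\X$} as well as in $\R^k$, which strictly speaking uses doubling of $(\X,\sfd,\mm)$ --- a hypothesis absent from the statement but present in all the paper's intended applications; this is a shared lacuna, not a defect of your approach.
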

\begin{proof} For every $k,i,n\in\N$  put $V^{k,n}_i:=\varphi^{k,n}_i(U^{k,n}_i)$ and notice that from \eqref{f_chart} we see that for $\mm$-a.e.\ $x\in U^{k,n}_i$ the point $\varphi^{k,n}_i(x)$ is of density 1 for $V^{k,n}_i$. Hence the set
\[
D:=\bigcap_{n\in\N}\bigcup_{i,k}\Big\{x\in U^{k,n}_i\ :\ x,\varphi^{k,n}_i(x)\text{ are points of density 1 for }U^{k,n}_i,\ V^{k,n}_i\text{ respectively}\Big\}
\]
is Borel and such that $\mm(\X\setminus D)=0$. Fix $\bar x\in D$ and $R>\eps>0$, let $k$ be such that $\bar x\in A_k$ and $i(n)$ such that $\bar x\in U^{k,n}_{i(n)}$.  

Fix $\bar\eps<\min\{\frac\eps{4R},\frac\eps{ R-\eps}\}$ positive  and repeatedly apply Lemma \ref{lemma_pts_1-density} to $\bar x,U^{k,n}_{i(n)}$ and to $\varphi^{k,n}_{i(n)}(\bar x), V^{k,n}_{i(n)}$ with $\bar\eps$ in place of $\eps$ to find a sequence $r_n\downarrow0$ such that for every $n\in\N$ it holds
\begin{equation}
\label{eq:dadens1}
\begin{split}
\sfd\big(y,P_{U^{k,n}_{i(n)}}(y)\big)&\leq2 \bar\eps r_nR\qquad\qquad\qquad\forall y\in B_{r_nR}(\bar x),\\
\sfd_{\R^k}\big(z,V^{k,n}_{i(n)}\big)&\leq\bar\eps |z-\varphi^{k,n}_{i(n)}(\bar x)|\qquad\ \forall z\in B_{r_nR}\big(\varphi^{k,n}_{i(n)}(\bar x)\big).
\end{split}
\end{equation}
From the fact that $\varphi^{k,n}_{i(n)}$ is $(1+\eps_n)$-biLipschitz we see that for any $ y_0,y_1\in B_{r_nR}(\bar x)$ it holds
\[
\begin{split}
\big|\Phi_n(\bar x,y_0)-\Phi_n(\bar x,y_1)\big|_{\R^k}&\leq \frac{1+\eps_n}{r_n}\sfd\big(P_{U^{k,n}_{i(n)}}(y_0),P_{U^{k,n}_{i(n)}}(y_1)\big)\\
(\text{by } \eqref{eq:dadens1})\qquad&\leq\frac{1+\eps_n}{r_n}\big(\sfd(y_0,y_1)+4\bar\eps r_nR\big).
\end{split}
\]
Similarly we get $\big|\Phi_n(\bar x,y_0)-\Phi_n(\bar x,y_1)\big|_{\R^k}\geq \frac{1}{(1+\eps_n)r_n}\big(\sfd(y_0,y_1)-4\bar\eps r_nR\big)$, thus
\[
\Big|\big|\Phi_n(\bar x,y_0)-\Phi_n(\bar x,y_1)\big|_{\R^k}-\frac{\sfd(y_0,y_1)}{r_n}\Big|\leq 2R \max\Big\{2(1+\eps_n)\bar\eps +\eps_n,\frac{2\bar\eps+\eps_n}{1+\eps_n}\Big\}\quad\forall  y_0,y_1\in B_{r_nR}(\bar x).
\]
Since $\bar\eps<\frac\eps{4R}$, this is sufficient to show that the first in \eqref{eq:converg} is fulfilled for $n$ large enough.

For the second, let $w\in\R^k$ be with $|w|<R-\eps$ and put $z_n:=\varphi^{k,n}_{i(n)}(\bar x)+r_nw$. Thus $z_n\in B_{r_nR}(\varphi^{k,n}_{i(n)}(\bar x))$ and from the second in \eqref{eq:dadens1} and the compactness of $U^{k,n}_{i(n)}$ we deduce that there exists $y_n\in U^{k,n}_{i(n)}$ such that
\begin{equation}
\label{eq:dz}
|z_n-\varphi^{k,n}_{i(n)}(y_n)|\leq\bar\eps  r_n|w|.
\end{equation}
Since the right hand side is bounded from above by $\bar\eps r_nR$, for $n$ sufficiently large it is bounded above by $\eps$, so that to conclude it is sufficient to show that, independently on the choice of $w$, for $n$ sufficiently large it holds $y_n\in B_{r_nR}(\bar x)$. To see this, recall  that the inverse of $\varphi^{k,n}_{i(n)}$ is $(1+\eps_n)$-Lipschitz to get
\[
\begin{split}
\sfd(\bar x,y_n)&\leq(1+\eps_n)|\varphi^{k,n}_{i(n)}(\bar x)-\varphi^{k,n}_{i(n)}(y_n)|\leq (1+\eps_n)\big(|\varphi^{k,n}_{i(n)}(\bar x)-z_n|+|z_n-\varphi^{k,n}_{i(n)}(y_n)|\big)\\
\text{by }\eqref{eq:dz}\qquad&\leq r_n(1+\eps_n)(1+\bar \eps)|w|\leq r_n(1+\eps_n)(1+\bar \eps)(R-\eps).
\end{split}
\]
Since $\bar\eps<\frac{\eps}{R-\eps}$ we have $(1+\bar \eps)(R-\eps)<R$ and therefore for $n$ sufficiently large we have $ r_n(1+\eps_n)(1+\bar \eps)(R-\eps)< r_nR$, which concludes the proof.
\end{proof}
\def\cprime{$'$} \def\cprime{$'$}

\end{document}